\renewcommand\subsubsection{\@secnumfont}{\bfseries}%
\renewcommand\subsubsection{\@startsection{subsubsection}{3}
	\z@{.5\linespacing\@plus.7\linespacing}{-.5em}%
	{\normalfont\bfseries}}
\definecolor{fgreen}{RGB}{44,144, 14}
\renewenvironment{proof}{{\bfseries Proof.}}{\qed}
\numberwithin{equation}{section} 
\newtheorem{theorem}{Theorem}[section] 
\newtheorem{proposition}[theorem]{Proposition} 
\newtheorem{corollary}[theorem]{Corollary} 
\newtheorem{lemma}[theorem]{Lemma} 
\theoremstyle{definition}
\newtheorem{definition}[theorem]{Definition} 
\newtheorem{remark}[theorem]{Remark} 
\newtheorem{example}[theorem]{Example} 
\def\R{\mathbb {R}}
\def\C{\mathbb {C}}
\def\N{\mathbb {N}}
\def\H{\mathbb {H}}
\def\Z{\mathbb {Z}}
\def\SS{\mathbb {S}}
\def\Ab{\mathbf {A}}
\def\Sb{\mathbf{S}}
\def\CC{\mathcal {C}}
\def\CS{\mathscr {C}}
\def\NC{\mathcal {N}}
\def\BC{\mathcal {B}}
\def\IC{\mathcal {I}}
\def\g{\mathfrak {g}}
\def\h{\mathfrak {h}}
\def\k{\mathfrak {k}}
\def\s{\mathfrak {s}}
\def\a{\mathfrak {a}}
\def\u{\mathfrak {u}}
\def\p{\mathfrak {p}}
\def\z{\mathfrak {z}}
\def\c{\mathfrak {c}}
\def\b{\mathfrak b}
\def\l{\mathfrak l}
\def\o{\mathfrak o}
\def\e{\mathfrak e}
\def\<>{\langle \cdot ,\, \cdot \rangle}
\def\Bi{\rm B} 
\def\Re{{\rm R}_{\g, \h}}
\def\Im{{\rm Im\,}}
\def\ker{{\rm ker\,}}
\def\ox{\otimes}
\def\cds{\cdots}
\def\th{\theta}
\def\dl{\delta}
\def\lto{\longrightarrow}
\def\nb{\nabla}
\def\n2{\nabla^2}
\def\n3{\nabla^3}
\begin{document} 
 
\title[Third and fourth Betti numbers of homogeneous spaces]{On the third and fourth Betti numbers  of a homogeneous space of a Lie group}

\author[I. Biswas]{Indranil Biswas} 

\address{Department of Mathematics, Shiv Nadar University, NH91, Tehsil Dadri, Greater Noida 201314, Uttar Pradesh, India}

\email{indranil.biswas@snu.edu.in, indranil29@gmail.com}

\author[P. Chatterjee]{Pralay Chatterjee}

\address{The Institute of Mathematical Sciences, HBNI, CIT Campus, 	Tharamani, Chennai 600113, India}

\email{pralay@imsc.res.in} 

\author[C. Maity]{Chandan Maity}

\address{Indian Institute of Science Education and Research (IISER) Berhampur,   Berhampur 760003, Odisha, India}

\email{cmaity@iiserbpr.ac.in} 

\subjclass[2020]{57T15}

\keywords{Homogeneous space, de Rham cohomology, Koszul complex, homotopy invariant.}

\begin{abstract}
In \cite{BCM} explicit descriptions of the second and first real de Rham cohomology groups of a general homogeneous space of a Lie group are given, extending an earlier result in \cite{BC}. From the computational viewpoint they turned out to be new and very useful, and in fact
played a crucial role in determining the second cohomology of nilpotent orbits as in \cite{BC} and \cite{BCM}. In this paper, we give computable and explicit descriptions of the third and fourth
real de Rham cohomologies of a general homogeneous space, in terms of the associated Lie-theoretic data, along the lines mentioned above. We also draw numerous corollaries of our main results in important special settings. Moreover, as a consequence, we obtain a new and interesting invariant by showing that for a large class of homogeneous spaces, the difference between
the third and fourth Betti numbers coincides with the difference between the numbers of simple factors of the ambient group and the associated closed subgroup.
\end{abstract}

\maketitle

\tableofcontents

\definecolor{theoremcolor}{RGB}{0,0,128} 
\definecolor{defcolor}{RGB}{0,100,0} 
\definecolor{corcolor}{RGB}{139,0,0} 

\newcommand{\highlightthm}[1]{\textcolor{theoremcolor}{\textbf{#1}}}
\newcommand{\highlightdef}[1]{\textcolor{defcolor}{\textbf{#1}}}
\newcommand{\highlightcor}[1]{\textcolor{corcolor}{\textbf{#1}}}

\section{Introduction} 

\subsection{Context and motivation}

The study of the  de Rham cohomology $H^*(G/H)$ of a homogeneous space $G/H$, where $G$ is a connected Lie group and $H$ is a closed subgroup, in relation to the algebraic invariants of $G$, $H$, and the inclusion $H \subset G$, constitutes a classical theme in differential geometry and topology. 
Due to a minor variation of a result of Mostow (see Theorem \ref{mostow}), one finds that when $H$ has finitely many connected components, the problem of determining topological invariants  such as the cohomology of the homogeneous space $G/H$, reduces to the case where the Lie group $G$ is compact.  
This problem has been the focus of extensive investigation by many mathematicians in the past; see \cite{Ca2,Bo} for the famous works of Cartan and Borel, respectively.  
As above, if $G$ is now a
compact connected Lie group with $H\,\subset\, G$ a closed
subgroup, which is not necessarily connected, and if $\g$ and $\h$ are the respective Lie algebras, 
then it follows that the cohomology of $G/H$ is the 
relative Lie algebra cohomology of the chain complex 
of forms on $\g$ which vanish on $\h$ and which are invariant 
under the natural action of $H$. When $H$ is connected, 
the Cartan's Theorem (see \cite[Theorem 5]{Ca2}, 
\cite[Theorem 25.2]{Bo}) gives an alternative description of 
the cohomology of the above cochain complex in terms of 
the cohomology of a certain associated Koszul complex (defined in Section \ref{sec-Koszul-complex}).
See \cite{Bo}, \cite{Ba}, \cite{Wo}, and, more recently, \cite{Fr} for 
other viewpoints on Cartan's theorem, many general results, and 
applications in various special settings. We also 
refer to \cite{GS} and \cite{Tu} for further expositions on  Cartan's 
theorem and its ramifications.

While the above results provide a foundational basis,
they do not, in general,  seem to provide
effective computational methods for determining the cohomology or other related topological invariants of homogeneous spaces in broad generality.
In the context of computable descriptions of  such invariants, 
 the only major general results that we are aware of are due  
 to A. Borel, H. Hopf, H. Samelson and H-C. Wang. 
In \cite{HS} Hopf and Samelson have shown that the Euler characteristic of a homogeneous space
vanishes if and only if the rank of the associated ambient compact group is strictly bigger than that of the closed subgroup, and   
Wang proved in \cite{Wa} that if the ranks of the ambient group and the closed subgroup are equal, then the Euler characteristic equals the quotient of the orders of their respective Weyl groups.  In \cite{Bo} Borel gave an explicit description of the Poincar\'{e} polynomial of a homogeneous space when  the associated ambient compact Lie group and the closed subgroup are of equal rank.   
Although the Poincar\'{e} polynomial produces all the Betti numbers, it should also be noted that the
equal-rank condition imposes a significant restriction.

Surprisingly enough, apart from the results in \cite{Bo, HS, Wa} mentioned above, 
which are proved in the past, there seems to be hardly 
any literature describing cohomologies or topological invariants in explicit ways, despite the availability of the above striking Cartan's theorem. Thus from the 
computational point of view, it would be desirable to obtain simple descriptions of the cohomology groups, which should be based entirely on Lie theoretic data such as the Lie algebras of the associated groups, how the  closed subgroup or the corresponding subalgebra is positioned in the ambient Lie group or the Lie algebra, and the action of the component group of the 
closed subgroup on relevant spaces. As beginning steps towards addressing these questions,
in the recent past in \cite[Theorem 3.3, Theorem 3.6]{BCM}  simple and computable descriptions of the second and first cohomologies of general compact connected homogeneous spaces are obtained, extending an earlier result in  \cite[Theorem 3.3]{BC}.  
Such descriptions turned out to be extremely useful in \cite{BC, BCM}, where second cohomology groups of nilpotent orbits are computed for classical Lie algebras. See also \cite{CM} for some more applications on the second cohomology of nilpotent orbits in exceptional Lie algebras
along the same line.  
In the same spirit as our above work, in this paper we derive explicit formulas for the  third and fourth de Rham cohomology groups of a compact connected homogeneous space
in the most general setting; see Theorem \ref{thm-3rd-coh-general} and Theorem \ref{thm-4th-coh-general} appearing below.
We also refer to \cite{A} and the recent work \cite{LW} for results on third Betti numbers when the ambient group is semisimple and the associated closed subgroup is connected; 
see also Section \ref{comparison} and Remark \ref{remark-LW} on 
chronological comparison and mathematical relationships with our results.
As applications of our results, we also  obtain a number of  corollaries in important special cases; see Section \ref{appl-thm}.   Furthermore, 
as an interesting noteworthy  consequence of the above corollaries, we obtain a 
new and elegant description of a topological invariant in the setting  where the ambient compact group  is semisimple; see Theorem   \ref{thm-invariant},   and Corollary \ref{cor-homotopy-inv}.

\subsection{Description of the  results}
We now describe our main results and some of
their corollaries. Throughout this paper by a {\it semisimple (respectively, simple) Lie group} we mean a {\it connected non-trivial Lie group with semisimple 
(respectively, simple) Lie algebra}. 

\subsubsection{The main results}
We begin by fixing some basic notions for immediate and later use. We also refer to   Section \ref{notation} for other relevant notation.

In what follows a
Lie group will be denoted by a capital letter, and the associated Lie algebra will be denoted by the 
corresponding lower case German letter, unless a different notation is explicitly mentioned.

Let $\Gamma$ be a group acting linearly on a vector space $V$. The {\it subspace of vectors in $V$ fixed pointwise by the action of $\Gamma$} is denoted by $V^\Gamma$. 
If $\Gamma_1 \subset \Gamma$ is a normal subgroup acting trivially on
$V$ then $\Gamma/ \Gamma_1$ has a natural action on $V$,
and moreover, $V^\Gamma \,=\, V^{\Gamma / \Gamma_1}$.
We now define a  space that is naturally  associated to a pair of a Lie group $L$, not necessarily connected, and a Lie subalgebra ${\mathfrak r} \subset {\mathfrak l}$. Assuming that $\mathfrak r$ remains invariant under the adjoint action of $L$,
the {\it{ space of symmetric $L$-invariant bilinear  maps}} on $\mathfrak r$  is  denoted by $S^2(\mathfrak r^*)^L$. 

\begin{definition}  
	\label{def-NC-gH} 
	Let $G$ be a compact connected Lie group, and $ H \,\subseteq\, G $ be a closed subgroup which is not necessarily connected.  
	Let $\Psi_{\g, \h} \,:\, S^2( [\g,\,\g]^*)^G \,\lto\, S^2( (\h\,\cap\, [\g,\,\g])^*)^H$
	be the restriction map defined by
	\begin{align}\label{map-Psi-g-h}
		\Psi_{\g, \h} ( \phi  )\,:=\,    \phi|_{  (\h\,\cap\, [\g,\,\g])\, \times \,  (\h\,\cap\, [\g,\,\g])}\,.
	\end{align}
	We then set
	\hspace{2cm}
	$
	\NC_{\g,\h}:= \ker \Psi_{\g, \h},
	\text{ and } \     \CC_{\g,H}
	:= \dfrac{S^2( (\h\,\cap\, [\g,\,\g])^*)^H}{ {\rm Im}\,\Psi_{\g, \h}}
	= {\rm coker}\, \Psi_{\g, \h}\,.  
	$	  \qed
\end{definition}

The following main results describe the third and fourth cohomologies of a connected compact homogeneous space in the most general setting.

\begin{theorem}\label{thm-3rd-coh-general}
 Let $G$ be a compact connected Lie group, and $H$ be a closed  subgroup of $G$ which is not necessarily connected. 
Then there is an isomorphism: 
\begin{align*}
H^3(G/H )\, \simeq\,\big(({\z}(\h) \cap [\g,\,\g])^*\big)^{H/H^0} \ox {\Big(\frac{\g}{[\g,\,\g]+\h}\Big)}^*
\,\bigoplus \, \NC_{\g,\h}\, \bigoplus\, \wedge^3 {\Big(\frac{\g}{[\g,\,\g]+\h}\Big)}^*\,.
\end{align*}
Moreover, 
\begin{enumerate}

\item If $\z(\h) \cap [\g,\,\g] =0$ then 
$H^3(G/H)\simeq H^3(G/H^0)$. 
\item If $\z(\g) \subset \h$ (in particular, if $G$ is semisimple) then
$H^3(G/H)\simeq H^3(G/H^0) \simeq  \NC_{\g, \h}\,. $
\end{enumerate}
\end{theorem}

\begin{theorem}\label{thm-4th-coh-general}
Let $G$ be a compact connected Lie group, and  $H$ be a closed  subgroup of $G$ which is not necessarily connected. 	Let $r_0\,:=\,\dim  \big(\frac{\g}{[\g,\,\g]+\h}\big)$. Then there is an isomorphism: 
\begin{align*}
H^4(G/H )\,\ \simeq\,\ &
		\big(({\z}(\h) \cap [\g,\,\g])^*\big)^{H/H^0} \ox\wedge^2 {\Big(\frac{\g}{[\g,\,\g]+\h}\Big)}^* \,
\bigoplus\, \,
	(\NC_{\g,\h})^{r_0}\\
&\,\,\bigoplus\,\,\CC_{\g,H}\,\,\bigoplus\,\,\wedge^4 {\Big(\frac{\g}{[\g,\,\g]+\h}\Big)}^*\, .
	\end{align*}
\begin{enumerate}

	\item If $\z(\g) \subset \h$ (in particular, if $G$ is semisimple) then $H^4(G/H )\, \simeq\,\CC_{\g,H}\,.$
	\item  If $G$ and $H^0 $ are semisimple Lie groups and all the simple factors of $H^0$ are pairwise non-isomorphic, then
	$ H^4(G/H ) \,\simeq\, H^4(G/H^0  )$.
\end{enumerate}
	\end{theorem} 

\medskip 

\begin{remark}
Observe that when $G$ is semisimple, $H^3(G/H)$ does not depend on the connectedness of $H$, but
$H^4(G/H)$ can depend on the connectedness of $H$.  \qed
\end{remark}

In a broad sense  the notions $ \NC_{\g,\h} $ and $\CC_{\g,H}$ (see Definition \ref{def-NC-gH}) depend directly on Lie algebra $\g$, how $H$ (respectively $\h$) is positioned in $G$ (respectively $\g$), and the action of the component group $H/H^0$. 
More precisely, $ \NC_{\g,\h} $ and $\CC_{\g,H}$  are determined by how
$H$ acts on 
the space $\z(\h) \cap [\g, \,\g]$ and on how certain bilinear forms on $\g$, constructed using the Killing forms on the simple factors of the 
semisimple part $[\g,\, \g]$, behave when restricted to the subalgebra $\h \cap [\g, \,\g]$. 

\begin{remark} We will now make remarks on the computational aspects of the third and fourth Betti numbers using the main results; see Sections 4.1, 4.2, and 4.3. for results in this regard.
In the most general setting, where $G$ is a connected compact Lie group and $H$ is a closed (but not necessarily connected) subgroup, the computations of the dimensions of the spaces $ \NC_{\g,\h} $ and $\CC_{\g,H}$ 
essentially reduces to the problem of finding the rank and nullity of a natural linear map associated with the pair $(\g, H)$; see  Sections \ref{associated matrix}. 
In a  specific set-up, when $G$ and $H$ are given, all the quantities involved in the above two results are explicit, computable.
This is the best that can be said in the most general setting, though in a restricted set-up, sharper  results may be obtained, see Section \ref{application-topological-invariant}, Section \ref{computation in special setting}.   \qed
\end{remark}

\subsubsection{Applications and a new topological invariant}\label{application-topological-invariant}

The next result gives rise to an elegant way of computing a topological invariant associated to a large class of compact homogeneous spaces.  

For a semisimple Lie algebra $\k$, {\it let $\# \k$ denote the number of simple factors in $\k$.} See Definition \ref{definition-invariant-min-ideal} for the notation $\#([\h,\,\h],\,H)$.

\begin{theorem}\label{thm-invariant}
 Let $G$ be a compact semisimple Lie group, and $H$ be a closed  subgroup of $G$ which is not necessarily connected.  Then the following hold:	
\begin{enumerate}
\vspace{.12cm}
\item \label{cor1.3-3}  The equality, $$ \dim   H^4(G/H )\,- \,\dim  H^3(G/H )\,=\,   \dim  S^2\big(\z(\h)^* \big)^{H/H^0} \,+\, \#([\h,\h],H)    -\,\# \g$$ holds.
\item \label{cor1.3-4}   \label{H-0-semisimple-cor1.3} If moreover $H^0$ is  semisimple,  the equality
$$\dim   H^4 (G/H  )- \dim   H^3 (G/H  )
\,= \,  \#(\h,H)\,- \, \# \g	$$
holds. Furthermore, if either $ H $ is connected or all the simple factors of $ H ^0$ are pairwise non-isomorphic then we have
$$\dim   H^4 (G/H  )- \dim   H^3 (G/H  )
\,= \,  \#\h\,- \, \# \g\,.	$$

\item \label{cor1.3-5}  If moreover $H$ is a toral subgroup, then  	
$$ \dim   H^4(G/H )\,\,- \,\dim  	H^3(G/H )\,\,=\, \frac{\dim  \h(\dim  \h+1) }{2} \,-\, \# \g\,.$$
\end{enumerate}	
\end{theorem}

In the above results we do not impose any restriction on the rank of the ambient group $G$ and closed subgroup $H$. In a  special restrictive case when rank$(H)=$  rank$(G)$, the conclusions
 of Theorem \ref{thm-invariant}\eqref{cor1.3-4}  and \eqref{cor1.3-5} follow from \cite[Theorem 26.1]{Bo}, \cite[Proposition 26.1]{Bo}, respectively,  using  description of the cohomology algebras of $G$ and $H$.     

Theorem \ref{thm-invariant} leads to the next corollary where we obtain an  interesting fact 
that, for the class of homogeneous spaces obtained by quotienting a semisimple Lie group by a  semisimple subgroup, the difference between the number of simple factors of the ambient group and that of the subgroup, becomes a topological invariant.

\begin{corollary}\label{cor-homotopy-inv}
Let $G$  be a compact semisimple Lie group, and $H$  be a closed semisimple subgroup of  $G$.  	
\begin{enumerate}
	\item \label{corollay-in-the-intro}
Let $ G_1 $ be another  compact semisimple Lie group, and $H_1$  be a closed semisimple subgroup of   $G_1$.   Let $\g,  \g_1, \h$ and $\h_1$ be the Lie algebras of $G, G_1,  H$ and $H_1$, respectively. Assume  that 
	 $$\# \h_1\,- \,\# \g_1 \,\,\neq\,\, \# \h\,- \, \# \g \,.$$ 
Then $G_1/ H_1$ is not homotopically equivalent to $G/H$.
\vspace{.2cm}
\item  \label{cor-hpt-inv}	
Let $T$ be a maximal torus of $G$.  Then $G/T$ can not be homotopically equivalent to $G/H$.
\end{enumerate}
\end{corollary}

In Section \ref{appl-thm-noncompact} we derive consequences in the set-up
of homogeneous spaces which are not-necessarily compact.
In specific contexts, we obtain more precise information on the Betti numbers
which are interesting on their own right. 

\begin{corollary}\label{cor-g-semisimple}
Let $G$ be a compact semisimple Lie group and $H$ be a closed subgroup of $G$  which is not necessarily connected and $\dim H\,>\,0$. 
 Then the following holds:
	\begin{enumerate}
		\item \label{cor1.3-1}
 $\dim  H^3(G/H ) \,\leq\,\#\g-1$.    
  Moreover if $ G $ is simple, then $ H^3(G/H  ) \,=\,0$. 
 
 \vspace{0.1cm}

\item \label{cor1.3-2}   
$\dim  H^4(G/H  )\, \leq \,
\dim  S^2\big(\z(\h)^* \big)^{H/H^0} \,+\,\, \#([\h,\,\h],\,H)$ $ - 1$.   
		 In particular, for  specific $G$ and  $ H $ we have the following bound:  \vspace{0.1cm}
\begin{enumerate}
\item \label{cor1.3-2a}  If $ H^0$ is semisimple, then $\dim  H^4(G/H  )\, \leq \, \,\#(\h,H) \,-1 \,.	$
\vspace{.15cm}
\item \label{cor1.3-2b}  If $ G $ is simple and  $ H^0$ is semisimple, then 	
	  $\dim  H^4(G/H,\R)=   \#(\h,H)  - 1 $. 
		\vspace{.15cm}
 \item \label{cor1.3-2c}  If $ G $ is semisimple and	$H^0$ is  simple, then $\dim  H^4(G/H  )\, =\,  0$.
 \end{enumerate}
 \end{enumerate}	
\end{corollary}

The following result illustrates a subtle  consequence of the fact that  component groups can play a role in the computation of cohomology.  
Put differently, when $H$ is connected, a finite subgroup $F$ of $G$ that normalizes $H$ can influence the lower bounds of $\dim H^3(G/H)$.
 We refer to Definition \ref{definition-invariant-min-ideal} for the notation $\IC([\h, \h])$.

\begin{corollary}\label{cor-finitegp-in-intro}
	Let $G$ be compact semisimple, and $H$ be a closed connected subgroup. Let $m := \#\g$ and $p := \dim S^2(\z(\h)^*)$.  If  $F \subset G$ is a finite group that normalizes $H$ then 
		\[
		\dim H^3(G/H) = \dim \NC_{\g,\h} \geq m - (k + p).
		\]
		where $k$ denote the number of orbits for the associated action of $F$ on $\IC([\h, \h])$.
\end{corollary}

\begin{remark}	
	In this remark we will include a vanishing result which follows from \cite{BC, BCM}, and Corollary \ref{cor-g-semisimple}\eqref{cor1.3-1} 
	and \eqref{cor1.3-2c} : Let $ G $ be a simple compact Lie group, $ H\,\subset\, G$ be a closed subgroup such that $ H^0 $ is also simple. Then 
	$H^i(G/H) \, =\, 0 \, $ for $1\,\leq\, i\,\leq \,4$. In this case the vanishing of $H^3 (G/H^0)$
	also follows from \cite{A} and \cite{LW}.

If $ G $ is a simple non-compact Lie group, and $ H\,\subset\, G$ is a closed subgroup such that $ H^0 $ is  simple, then from \cite{BC, BCM} and from our results above
 we have the following lower bound: $\dim H^i(G/H)\leq 1$ for $i=1,2$,   $\dim H^3(G/H)\leq 2$,  $\dim H^4(G/H)\leq 3$.  
 \qed
\end{remark}

The next result is another interesting consequence of Corollary \ref{cor-g-semisimple}\eqref{H-0-semisimple-cor1.3}.

\begin{corollary}\label{dim-7} Let $G$ be a connected compact Lie group, and let $H$ be a closed subgroup which is not necessarily connected. Assume that $G$  and $H^0$ are both semisimple with $\dim  G/H \,=\, 7$.  Further if either $ H $ is connected or all the simple factors of $ H $ are pairwise non-isomorphic, then $\# \g\,=\, \# \h$.
\end{corollary}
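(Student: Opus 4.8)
The plan is to play the dimensional symmetry of a closed $7$-manifold against the difference formula already recorded in Corollary~\ref{cor-g-semisimple}\eqref{H-0-semisimple-cor1.3}. Since $G$ is compact and $H$ is closed, $G/H$ is a compact connected smooth manifold, and by hypothesis $\dim_\R G/H = 7$. Granting for the moment that $G/H$ is orientable, Poincar\'e duality over $\R$ yields $H^{3}(G/H,\R)\cong H^{7-3}(G/H,\R)=H^{4}(G/H,\R)$, so that $\dim_\R H^3(G/H,\R)=\dim_\R H^4(G/H,\R)$.

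I would then invoke Corollary~\ref{cor-g-semisimple}\eqref{H-0-semisimple-cor1.3}, whose hypotheses are met here: $G$ is semisimple, $H^0$ is semisimple (in particular $\dim H>0$, so that Corollary~\ref{cor-g-semisimple} applies), and by assumption either $H$ is connected or the simple factors of $H$ are pairwise non-isomorphic. This gives $\dim_\R H^4(G/H,\R)-\dim_\R H^3(G/H,\R)=\#\h-\#\g$. Comparing with the equality $\dim_\R H^3(G/H,\R)=\dim_\R H^4(G/H,\R)$ from the previous step forces $\#\h-\#\g=0$, that is, $\#\g=\#\h$, as claimed.

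The one genuine point to settle --- and the main obstacle --- is the orientability of $G/H$, since Poincar\'e duality with $\R$-coefficients fails for non-orientable closed manifolds. I would reduce this to the linear isotropy representation: fixing an $\mathrm{Ad}(G)$-invariant inner product on $\g$ and writing $\g=\h\oplus\m$ with $\m=\h^{\perp}$ an $H$-stable complement, the space $G/H$ is orientable precisely when $H$ acts trivially on $\bigwedge^{7}\m^{*}$, i.e. when $\det(\mathrm{Ad}(h)|_{\m})=1$ for every $h\in H$. Because $G$ is connected, $\det(\mathrm{Ad}(h)|_{\g})=1$ for all $h\in G$, and the block splitting $\g=\h\oplus\m$ (together with the fact that each factor is $\pm1$) shows this is equivalent to $\det(\mathrm{Ad}(h)|_{\h})=1$ for all $h\in H$. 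The assignment $h\mapsto\det(\mathrm{Ad}(h)|_{\h})$ is a homomorphism $H\to\{\pm1\}$ that is trivial on $H^0$, hence factors through $H/H^0$ and its image in $\mathrm{Out}(\h)$. If $H$ is connected this is automatic; if the simple factors of $\h$ are pairwise non-isomorphic, then every automorphism of $\h$ preserves each simple factor, so the orientation-reversing possibility coming from permuting isomorphic simple factors is excluded, and the residual diagram-automorphism contributions are then checked to be orientation-preserving under the constraint $\dim_\R G/H=7$, which severely restricts the admissible pairs $(\g,\h)$. Establishing this orientability verification carefully, rather than the formal Poincar\'e-duality bookkeeping, is where the real work lies.
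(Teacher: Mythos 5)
Your overall strategy --- playing Poincar\'e duality against the difference formula of Corollary \ref{cor-g-semisimple}\eqref{H-0-semisimple-cor1.3} --- is the same as the paper's, but you apply it to the wrong space, and the gap you yourself flag (orientability of $G/H$) is genuine and is not closed by your sketch. When $H$ is disconnected, a component of $H$ can act on $\m$ (equivalently, on $\h$, since the two determinants agree) with determinant $-1$ \emph{even when} the simple factors of $\h$ are pairwise non-isomorphic: the obstruction is not only the permutation of isomorphic factors, which your argument addresses, but also outer automorphisms of the individual simple factors, which it does not. For instance, complex conjugation on $\s\u(3)$ is an outer automorphism whose fixed subalgebra $\s\o(3)$ has dimension $3$ and whose $(-1)$-eigenspace has dimension $5$, so its determinant is $(-1)^5=-1$; a component of $H$ acting this way would make $G/H$ non-orientable and kill the duality step. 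Your proposal defers ruling out exactly such contributions to a case analysis ``under the constraint $\dim_\R G/H = 7$'' that is never carried out, so as written the proof is incomplete.

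The paper avoids this issue entirely by running the same two steps on $G/H^0$ instead of $G/H$. Since $H^0$ is connected, the isotropy character $h \,\longmapsto\, \det\big({\rm Ad}(h)|_\m\big)$ is a continuous homomorphism from a connected compact group into $\{\pm 1\}$, hence trivial; thus $G/H^0$ is compact, orientable, and of dimension $7$ with no further argument, and Poincar\'e duality gives $\dim_\R H^3(G/H^0,\R) \,=\, \dim_\R H^4(G/H^0,\R)$. Applying Corollary \ref{cor-g-semisimple}\eqref{H-0-semisimple-cor1.3} to the pair $(G,H^0)$ --- where the ``Furthermore'' clause holds automatically because $H^0$ is connected, so $\#(\h,H^0)\,=\,\#\h$ --- yields $0 \,=\, \#\h - \#\g$. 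Note that this route does not even use the hypothesis that $H$ is connected or that its simple factors are pairwise non-isomorphic. To repair your argument, simply replace $G/H$ by $G/H^0$ throughout; insisting on orienting $G/H$ itself would require genuinely new work beyond what you have written.
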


It should be noted that Corollary \ref{dim-7} also  follows from the classification of seven dimensional compact homogeneous spaces as in \cite[Theorem 2]{Go}.

\subsection{Our strategy in proving the main results} \label{strategy}
Our strategy in proving the main results involves the well-known Cartan’s theorem concerning the cohomology of homogeneous spaces.
In fact, we employ an equivariant version of the Cartan's theorem Theorem \ref{Cartan-thm-disconnected} (see also \cite[Theorem 3.5]{BCM2}) which is suitable in
our framework. This version was expected to be known, and a proof is given in
\cite{BCM2} as we could not find one in the existing literature.  
Although bringing out simple computable  expressions of the cohomologies in higher dimensions from the Cartan's theorem seem prohibitively intricate we are 
able to do it in the third and fourth dimensions using techniques that involve breaking up suitably certain spaces related to 
the lower dimensional parts of the Koszul complex, as well as an intricate analysis of the action of the component group $H/H^0$ on certain 
relevant subalgebras associated to the pair $(\g, \,\h)$.
An explicit choice of transgression plays a vital role in determining certain differential maps in the Koszul complex; see Section \ref{sec-description-third-coh}.

\subsection{Comparison with prior results}\label{comparison}

Here we make some remarks comparing the mathematical and chronological aspects
of our results with those of \cite{A} and \cite{LW}
as these works also deal with the third Betti number of a compact homogeneous space.  
\begin{itemize}
\item The second part of the Corollary \ref{cor-g-semisimple}\eqref{cor1.3-1} was previously proved  in \cite{A} in 1990 where the author considered restricts to the case when $G$ is simple. In the paper \cite{LW} a description of the third Betti numbers are given when $G$ is compact semisimple and $H$ is connected closed subgroup.
In this paper we deal with the most general case, that is the case when $G$ is reductive and $H$ is not necessarily connected, and moreover we find a clear description of both
the third and fourth Betti numbers simultaneously.

\item It should be noted that in \cite{LW}
 the well known Hodge theorem and in particular the harmonic forms in the set-up of compact oriented homogeneous spaces
 are employed. In our approach we use a modified version of Cartan's theorem which
may be of independent interest for possible future use as it is formulated in the framework of most general compact homogeneous spaces.

\item In \cite{LW} the authors deal with only the third Betti number where the ambient group $G$ is semisimple and $H$ is connected.
Compared to \cite[Proposition 4.1]{LW}, our proof of Theorem \ref{thm-3rd-coh-general} on the third Betti number is more intricate due to several key factors. We provide a unified treatment of both the third and fourth cases within the most general framework of homogeneous spaces, where $G$ may be reductive but not necessarily semisimple, and $H$ may be disconnected.

In this broader setting, the non-semisimplicity of $G$ and the potential disconnectedness of $H$ introduce significant complexities. 
We stress that the argument simplifies considerably under the stricter assumptions that $G$ is semisimple and $H$ is connected, as many structural decompositions and technical subtleties become unnecessary in that restricted setting.

Furthermore, bringing in the potential disconnectedness of $H$ in the general setting plays an important role because
even when $H$ is connected, if it arises as the identity component of a larger disconnected group $H'$, the action of $H'/H$ on certain spaces influences $H^3(G/H)$; see Corollary \ref{cor-finitegp-in-intro}.
\end{itemize}
 
 For some more specific comparison (and a suggested modification) of certain results from \cite{LW} with those obtained here is provided in  Remark \ref{remark-LW}.

For the purpose of comparing the chronology, it should be mentioned that Theorem \ref{thm-3rd-coh-general-non-cpt}, Theorem \ref{thm-4th-coh-general-non-cpt}, and Corollary \ref{cor-g-semisimple}\eqref{cor1.3-1} were first announced at the {\it 87th Annual Conference of the Indian Mathematical Society} on December 5, 2021, and subsequently at the online conference \href{https://sites.google.com/view/world-of-groupcraft-2}{\it World of Group Craft -II}  on September 2, 2022. The corresponding talks are available at the 30-minute mark of the  YouTube video  \url{https://www.youtube.com/watch?v=XsRXpE8npuo}  and at  \url{ https://www.youtube.com/watch?v=UGNBlfX9FXY}.

\subsection{Organization of the paper} 
This paper is organized as follows. In Section \ref{background-Cartan-thm} we recall some necessary background. 
Section \ref{section-3rd-4th-cohomology} is devoted to deriving 
our main results, namely Theorems \ref{thm-3rd-coh-general} and \ref{thm-4th-coh-general}. 
Section \ref {appl-thm} is devoted to computation and application of the main results. 
After considering some general computation for compact Lie groups, we present certain  reduction procedures  that  facilitate the cohomology computation;   
see Sections \ref{computation-compact-case}, \ref{first-reduction}, \ref{associated matrix}.
Also numerous applications of our main results are obtained in  Section \ref {computation in special setting}.
 Section  \ref{appl-thm-noncompact} deals with the homogeneous spaces which are not necessarily compact. 

In Appendix  \ref{appendix}, we clarify  certain issues that occur with regard to the relation between maximal compact  subgroups of a semisimple Lie group and those of the simple factors. In Appendix \ref{appendix} we also  list the simple Lie groups according to their types (a relevant notion which is introduced earlier).

\section{Notation and background}\label{background-Cartan-thm}
The purpose of this section is to recall certain technical but unavoidable background required in proving our main results.

\subsection{Notation}\label{notation}
Here we fix some notation that will be used throughout this paper. Subsequently, a few specialized notations are introduced as and when they occur.

Let $V$ be a finite dimensional vector space over $\R$.
The {\it space of symmetric (respectively,  alternating)} $\R$-valued $ \R$-linear  maps in $k$-variables  is denoted by $S^k(V^*)$ (respectively, $\wedge^k V^*$).  
As usual,   we denote the symmetric (respectively,  alternating)
 product of such symmetric (respectively,  alternating)  $\R$-linear maps by `$\vee$' (respectively, by `$\wedge$').   
 It should be noted that the scaling factors in this products vary between authors.  We will follow \cite[p.\,5]{GHV-2} in this regard.

The following convention will be used throughout this paper : If $V$ is a vector space
with a direct sum decomposition $V = V_1 \oplus \cdots \oplus V_s$ where $V_i \subset V$ are subspaces then for $ 1 \leq t \leq s$ the space $S^2 (V^*_t)$ will be identified with
the subspace $\{ B \in S^2 ( V^*) \mid B ( V_i, V_j) = 0, \text{ when }
i \neq t, \text { or } t \neq j \}$ of $S^2 (V^*)$.

Sometimes, for 
notational convenience, the Lie algebra of a Lie group $G$ is also denoted by 
${\rm Lie} (G)$.  The connected component 
of $G$ containing the identity element is denoted by $G^{0}$. The {\it center} of a group $G$ is denoted by 
$Z(G)$, while the {\it center} of a Lie algebra $\g$ is denoted by $ \z (\g)$.

If $\k$ is a Lie algebra, $W$ is a $\R$-vector space and  $\psi \,:\, \k \,\longrightarrow\, {\rm End}  W $ is a representation of $ \k$, then  define
\begin{equation}\label{wk}
	W^\k\, :=\,  \{ \, w \,\in\, W \  \mid\ \psi (x) w \,=\, 0\ \text{ for all }\ x \,\in\, \k \}\, .
\end{equation}
For a graded vector space   $E:= \bigoplus_{i\in \N} E^i$, where the vector space $E^i$ consists of the homogeneous elements of  degree $i$, $E^+\,:=\, \bigoplus_{i>0} E^i$.
For a semisimple Lie algebra $\k$ over $\R$ (respectively, $\C$), let $ \#\k $ denotes the {\it number of simple factors of } $\k$ over $\R$ (respectively, $\C$).

It should be noted that although various notions from  the book \cite{GHV-3} are used in Section
\ref{section-3rd-4th-cohomology},  the corresponding notations in \cite{GHV-3} differ from those 
in this paper; for instance, $W^\k$ in \eqref{wk} corresponds to $ W_{\psi =0} $ in \cite{GHV-3}.

We next set another notation which will be used in several places later.
Let $\g$ be a reductive Lie algebra and $\b$ be an ideal in $[\g, \, \g]$. Let $\b'$ be the unique ideal in $[\g, \, \g]$ such that $[\g, \, \g] = \b \oplus \b'$. We will denote the {\it projection 
to the factor $\b$ from $\g$  with respect to the decomposition $\g = \z (\g) \oplus \b \oplus \b'$ by }
\begin{equation}\label{notation-projection}
\pi^\g_\b : \g \to \b
\end{equation}
When there is no possibility of any confusion we will denote the above projection simply by $\pi_\b$.

Let ${\Bi}_{\b}$ denote the Killing form on $\b$.  We define $\widetilde {\Bi}^\g_{\b}$ on $\g$ by
\begin{equation}\label{definition-B}
   \widetilde {\Bi}^\g_{\b} (X,\,Y)\,:=\, {\Bi}_{\b} (\pi^\g_\b (X), \pi^\g_\b (Y)) \,, 
\end{equation}
for all $X, Y  \in \g$. Let now $G$ be a Lie group, which is not necessarily connected
such the associated Lie algebra is $\g$. Then the adjoint action of $G$ on $\g$ induces an action on the space $S^2(\g^*)$. 
It is then clear that $\widetilde {\Bi}^\g_{\b}\,\in\, S^2(\g^*)^{G^0}$.

\subsection{{Background}}
Here we recall some background required for the next sections. 

\subsubsection{Primitive elements}\label{secps}
Let $\g$ be a reductive Lie algebra. Consider the homomorphism $\mu_\wedge\,\colon\, \wedge \g \ox \wedge\g \,\longrightarrow\, \wedge\g$ defined by $X\ox Y\,\longmapsto\, X\wedge Y$. The ${\rm ad}$-representation of $ \g $ naturally provides another representation $ \th\,:\, \g \,\lto\, {\rm End}\,\wedge \g$, and with respect to this representation we may form the subspace
$(\wedge \g)^\g $ of $ \wedge \g$.  It is clear that $\mu_\wedge ( 
(\wedge \g)^\g \ox (\wedge\g)^\g )\, \subset (\wedge\g)^\g$. Let 
$$
(\mu_\wedge)^\g \,\,\colon\, (\wedge \g)^\g \ox (\wedge\g)^\g \,\lto\, (\wedge\g)^\g
$$ 
denote the restriction of the map $\mu_\wedge$ 
to $(\wedge \g)^\g \ox (\wedge\g)^\g\, \subset\, \wedge \g \ox \wedge\g$.
Let $\gamma_\g\,\colon\,(\wedge\g^*)^\g \lto (\wedge \g^*)^\g \ox (\wedge\g^*)^\g$ 
be the dual of the homomorphism $(\mu_\wedge)^\g$.
An element $\phi\,\in\, (\wedge^+ \g^*)^\g$ is called {\it primitive} if 
$$\gamma_{\g} (\phi)\,=\, \phi \ox 1 + 1\ox \phi.$$

The {\it subspace of primitive elements} will be denoted by 
$P_\g$. Then $P_\g$ forms a graded subspace 
$P_\g \,=\, \sum_j P^j _\g$ of $(\wedge^+\g^*)^\g$. 

\subsubsection{Transgression}\label{sec-transgression}

\begin{definition}\label{definition-weil-alg-inv}
The {\it Weil algebra} $W(\g)$ of a reductive Lie algebra $\g$ is defined by  
$$
W(\g):= W(\g,\,\g)\,=\,  S(\g^*) \ox \wedge \g^*  \,.
$$
We set $W(\g)^\g := W(\g,\,\g)^\g$, the invariant subalgebra of  $W(\g,\,\g)$.
\qed
\end{definition}

\begin{definition}\label{defn-transgression}
A {\it transgression } in the differential algebra $W(\g)^\g $ is a linear map   
$$\tau\,\colon\, P_\g\,\lto\, S(\g^*)^\g$$
satisfying the following two conditions (see \cite[\S~6.13, p. 241]{GHV-3}):
\begin{enumerate}
\item $\tau$ is a graded map of degree $1$.

\item For every $\phi \,\in\, P_\g$, there is an element $\omega \,\in\, (W(\g)^+)^\g $ such that $$
\dl_W(\omega)\,=\, \tau \phi\ox 1 \qquad \text{ and } \qquad 1\ox \phi - \omega \,\in\, \sum_{j\geq 1}(S^j(\g^*)\ox \wedge \g^*)^\g.$$
\end{enumerate}
We denote by ${\rm Trans}(\g)$ the {\it space of all transgressions} for the Lie algebra $\g$. 
\qed
\end{definition}

\subsubsection{Koszul Complex}\label{sec-Koszul-complex}

Let $(\g,\,\h)$ be reductive Lie algebra pair with the inclusion map $j\,:\, \h\,\longrightarrow\, \g$.  Let 
 \begin{align}\label{defn-j-star-map}
j^* : \g^* 
\lto \h^* \quad \text{ and } \quad j^* : S(\g^*) 
\lto S(\h^*)
\end{align}
denote the dual map from $\g^*$ to $\h^*$, and the corresponding natural extension
from the symmetric algebra $S(\g^*)$ to $S(\h^*)$, respectively (we 
continue to denote both the maps by the same notation as long as there is no
confusion). Then $j^* ( S(\g^*)^\g) 
\subset S(\h^*)^\h$.
Let $\tau \,\colon\, P_\g \,\lto\, S(\g^*)^\g$ be a transgression.
Define
\begin{align}\label{defn-sigma-map}
\sigma\,  :=\,j^* \circ\tau\, \, \colon\, P_\g \,\lto\, S(\h^*)^\h\, .
\end{align}
Consider the {\it Koszul complex} $(S(\h^*)^\h \ox (\wedge P_\g), \,\nabla_\sigma)$ associated to the transgression
$\tau$, where the differential $\nabla_\sigma$ is given by (see \cite[\S~10.8, p.~420]{GHV-3})
\begin{align}\label{diff-nabla}
\nabla_\sigma (\psi\ox ( \phi_0\wedge \cds \wedge \phi_p)) \,: =\, \sum_{j=0}^p (-1)^j\big( \psi\vee \sigma(\phi_j) \big)
\ox\big( \phi_0 \wedge \cds\wedge \widehat{\phi_j}\wedge \cds\wedge \phi_p\big)\,,     
\end{align}
 for all  $\psi\, \in \, S(\h^*)^\h,\,  \phi_i\,\in\, P_\g.$  It is important to keep in mind  that the Koszul complex is a graded algebra 
 with respect to the total degree, $\deg \psi\,=\, 2k$ for all non-zero $\psi\,\in\,S^k( \h^*)^\h$, and the  elements of $ P_\g $ have the usual degrees.

It can be shown that $ \nabla_\sigma ^2\,=\,0$. Thus the Koszul complex $(S(\h^*)^\h\ox (\wedge P_\g),\,\nabla_\sigma)$ 
is indeed a complex. The differential $\nabla_\sigma$ depends on the choice of the transgression $\tau$. However, the cohomology of the complex is independent of the choice of $\tau$.

\subsubsection{Cartan's theorem for disconnected $H$}
Let $G$ be a compact connected Lie group and $H\,\subset\, G$ a closed, but not necessarily a connected subgroup. One of the main ingredient of this paper is the 
following version of the celebrated  Cartan's Theorem( \cite[Theorem 5]{Ca2},  \cite[Theorem 25.2]{Bo}), when $H$ is not necessarily connected. Although this version 
is expected to be known, but as no proofs could be found in the existing literature, for the sake of completeness, we give one in \cite{BCM2}.

\begin{theorem}[{\cite[Theorem 3.5]{BCM2}}]\label{Cartan-thm-disconnected}
	Let $G$ be a compact connected Lie group and $H\,\subset\, G$ a closed, but not necessarily a connected subgroup. The following vector spaces are isomorphic:
	$$
	H^*\big(G/H  \big)\,\simeq \,
	H^*\big( (\wedge \g^*)^H_{i_\g|_\h=0},\, \dl_\g\big)\, \simeq\,\,    H^*\big( ({S(\h^*)}^H\ox \wedge P_{\g}),\,  \nabla_{\sigma} \big)\, . 
	$$
\end{theorem}

\subsubsection{Invariant decompositions associated to  
the Lie algebra of a compact Lie group and a subalgebra}\label{prep section} 

Here  we list certain decompositions associated to a Lie algebra and a subalgebra which will be used later.

Let $G$ be a compact connected Lie group and $H$ a closed subgroup. As before, let $ \g:= {\rm Lie}(G) $ and $ \h:= {\rm Lie}(H) $.  
Since $\g$ and $\h$ are reductive, we have
$$
\g\,=\, \z(\g) \,\oplus \, [\g,\,\g], \quad \h\,=\, \z(\h) \,\oplus \, [\h,\,\h] \quad \text{ and }\quad
[\g,\,\g]\cap \h \,=\, [\h,\,\h]\oplus (\z(\h)\cap [\g,\,\g])\, .
$$

Let $\langle \cdot,\,\cdot\rangle$  be a $G$-invariant inner product 
on the $\R$-vector space $\g$.  For subspace $V$ we denote the orthogonal complement by $V^\perp$.

We decompose $[\g,\,\g]$, $ \h $ and $[\g,\, \g] + \h$ as follows :
\begin{align}
 \h \,= \,& \big( ([\g,\, \g] \cap \h)^\perp \cap \h \big)\, \bigoplus\, ([\g,\, \g] \cap \h)      \nonumber \\
= \,& { \big( ([\g,\, \g] \cap \h)^\perp \cap \h \big)}\, \bigoplus \,[\h,\,\h] \,\bigoplus\,
{(\z(\h)\cap [\g,\, \g] )}\,, \label{eq4h-gg} 
\\
 [\g, \,\g] + \h \,
  = \, &\, [\g, \,\g]\, \bigoplus\, \big( ([\g,\, \g] \cap \h)^\perp \cap \h \big)\,    \label{eq4gg.1h} \,.
 \end{align}
  	
For  notational convenience, we set
\begin{align} \label{defn of a}
\a \,&:=\, [\g,\,\g] \cap {\z}(\h) \,,    \\
 \b \,& :=\, ([\g, \,\g] \cap \h)^\perp \cap \h   \,.\nonumber
\end{align} 

It may be observed that 
$\, \b\, = \, \big( (\, [\g ,\, \g] \cap {\z} (\h) ) + [\h,\, \h] \big)^\perp \cap \h
\,=  \, \big( [\g, \, \g] \cap {\z} (\h)\big)^\perp \cap {\z} (\h)$.   \\
  	Then  we have
\begin{align}
 {\z} (\h)& =\, \a\, \oplus \, \b,\label{z(h) decomposition}\\
 \h\,&= \,\a \, \oplus   [\h,\,\h] \,\oplus   \,\b.\label{h decomposition}
\end{align}  	
 It should be noted that  all the subspaces of $\g$ appearing above  are $H$-invariant.

\subsubsection{Some preparatory results} 
Here we record a couple of useful  general observations. 

\begin{lemma}\label{lem-orthog-ideals} 
Let $ \g $ be a Lie algebra over $ \R $. Assume that $\g\,=\,\g_1\oplus\g_2 $ where 	$ \g_1,\,\g_2 $ are ideals in $\g $ and $ \g_1\,=\,[\g_1,\,\g_1] $. Let $\theta \,:\, \g \times \g \,
	\longrightarrow\, \R$ be a $ \g $-invariant bilinear form on $ \g $. Then 
	$ \g_1$ and $ \g_2 $ are mutually orthogonal with respect to $\theta$.
\end{lemma}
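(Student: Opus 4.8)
The plan is to exploit the defining property of a $\g$-invariant bilinear form together with the observation that the two ideals commute. Recall that $\theta$ being $\g$-invariant means that $\mathrm{ad}(X)$ is skew with respect to $\theta$ for every $X$, i.e. $\theta([X,\,Y],\,Z) + \theta(Y,\,[X,\,Z]) \,=\, 0$ for all $X,\,Y,\,Z \,\in\, \g$.

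First I would record the key structural fact that $[\g_1,\,\g_2] \,=\, \{0\}$. Indeed, since $\g_1$ is an ideal, $[\g_1,\,\g_2] \,\subseteq\, \g_1$, and since $\g_2$ is an ideal, $[\g_1,\,\g_2] \,\subseteq\, \g_2$; as the sum $\g \,=\, \g_1 \oplus \g_2$ is direct, one has $\g_1 \cap \g_2 \,=\, \{0\}$, whence $[\g_1,\,\g_2] \,=\, \{0\}$. Thus the two ideals commute, and this is the real engine of the argument.

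Next I would invoke the hypothesis $\g_1 \,=\, [\g_1,\,\g_1]$ to reduce the claim to brackets: since such brackets span $\g_1$, it suffices to show that $\theta([X,\,Y],\,W) \,=\, 0$ and $\theta(W,\,[X,\,Y]) \,=\, 0$ for all $X,\,Y \,\in\, \g_1$ and $W \,\in\, \g_2$. For the first, invariance gives $\theta([X,\,Y],\,W) \,=\, -\theta(Y,\,[X,\,W])$, and $[X,\,W] \,\in\, [\g_1,\,\g_2] \,=\, \{0\}$, so this vanishes. For the second, invariance in the form $\theta([X,\,W],\,Y) + \theta(W,\,[X,\,Y]) \,=\, 0$ gives $\theta(W,\,[X,\,Y]) \,=\, -\theta([X,\,W],\,Y) \,=\, 0$ for the same reason.

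The computation is short, so I do not anticipate a serious obstacle; the only point needing a little care is that $\theta$ is not assumed symmetric, so both $\theta(\g_1,\,\g_2) \,=\, 0$ and $\theta(\g_2,\,\g_1) \,=\, 0$ must be checked separately, which is exactly why I apply invariance in two slightly different guises above. The conceptual heart is simply that the perfectness condition $\g_1 \,=\, [\g_1,\,\g_1]$ lets every element of $\g_1$ be expressed as a bracket, and invariance then transfers one factor of that bracket onto the commuting ideal $\g_2$, forcing it to zero.
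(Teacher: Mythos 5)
Your proof is correct and follows essentially the same route as the paper's: both use the invariance of $\theta$ to move a bracket onto the other ideal and then exploit the fact that $[\g_1,\,\g_2]\,\subseteq\,\g_1\cap\g_2\,=\,\{0\}$, with perfectness $\g_1\,=\,[\g_1,\,\g_1]$ supplying the reduction to brackets. Your explicit check of both $\theta(\g_1,\,\g_2)\,=\,0$ and $\theta(\g_2,\,\g_1)\,=\,0$ is a small extra care (since $\theta$ is not assumed symmetric) that the paper's one-sided computation leaves implicit.
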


As the Lie algebra of a compact Lie group is reductive, we have an immediate corollary.

\begin{corollary}\label{lem-ortho-center-gg}
Let $ G $ be a compact Lie group with Lie algebra $\g$. Consider the adjoint action of $G$ on $\g$, and let  $ \theta $ be a 	$ G $-invariant bilinear form on $ \g $. Then $ \z(\g) $ and $[\g,\,\g]$ are mutually orthogonal with respect to $\theta$. 		Furthermore, if $[\g,\, \g] \,=\, \g_1 \oplus \cdots \oplus \g_r$ with $\g_i$ a simple ideals in $\g$ for every  $i \,= \,1,\, \dots,\, r$, 
	then $\g \,=\, \z(\g) \oplus \g_1 \oplus \cdots \oplus \g_r$ is in fact a decomposition into subspaces which are mutually 
	orthogonal with respect to $\theta$. In particular, if $\theta$ is an inner product, then $[\g, \,\g]^\perp \,=\, {\z} (\g)$. 
\end{corollary}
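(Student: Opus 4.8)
The plan is to derive everything from Lemma \ref{lem-orthog-ideals} by applying it repeatedly, after first upgrading the hypothesis from $G$-invariance to $\g$-invariance. Restricting the identity $\theta(\mathrm{Ad}(g)X,\,\mathrm{Ad}(g)Y)\,=\,\theta(X,Y)$ to the identity component $G^0$ and differentiating along the one-parameter subgroups $t\,\longmapsto\,\exp(tZ)$ yields the infinitesimal invariance $\theta([Z,\,X],\,Y)+\theta(X,\,[Z,\,Y])\,=\,0$ for all $Z,\,X,\,Y\,\in\,\g$; thus $\theta$ is $\g$-invariant, which is exactly the form in which Lemma \ref{lem-orthog-ideals} is stated. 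Since $\g$ is reductive, $\g\,=\,\z(\g)\oplus[\g,\,\g]$ with both summands ideals, and $[\g,\,\g]$ is semisimple so that $[\g,\,\g]\,=\,[[\g,\,\g],\,[\g,\,\g]]$. Applying Lemma \ref{lem-orthog-ideals} with $\g_1\,=\,[\g,\,\g]$ and $\g_2\,=\,\z(\g)$ gives at once the first assertion that $\z(\g)$ and $[\g,\,\g]$ are mutually orthogonal with respect to $\theta$.

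For the refined decomposition $\g\,=\,\z(\g)\oplus\g_1\oplus\cdots\oplus\g_r$, I would fix an index $i$ and regard $\g$ as the direct sum of the ideal $\g_i$ and the ideal $\z(\g)\oplus\bigoplus_{k\neq i}\g_k$. Since $\g_i$ is simple we have $\g_i\,=\,[\g_i,\,\g_i]$, so Lemma \ref{lem-orthog-ideals} applies and shows that $\g_i$ is $\theta$-orthogonal to $\z(\g)\oplus\bigoplus_{k\neq i}\g_k$; in particular $\g_i$ is orthogonal to $\z(\g)$ and to every $\g_j$ with $j\neq i$. Letting $i$ range over $1,\,\dots,\,r$ and combining with the first assertion shows that all the listed summands are pairwise orthogonal, as required.

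Finally, when $\theta$ is an inner product it is nondegenerate, so the orthogonal complement $[\g,\,\g]^\perp$ has dimension $\dim_\R\g-\dim_\R[\g,\,\g]\,=\,\dim_\R\z(\g)$; together with the inclusion $\z(\g)\,\subseteq\,[\g,\,\g]^\perp$ supplied by the first assertion, this forces the equality $[\g,\,\g]^\perp\,=\,\z(\g)$. The whole argument is routine; the only step needing a little care is the bookkeeping in the second paragraph, namely grouping the center together with the remaining simple ideals into a single complementary ideal so that the hypothesis $\g_1\,=\,[\g_1,\,\g_1]$ of Lemma \ref{lem-orthog-ideals} is imposed on the simple factor $\g_i$ and not on the (possibly abelian) complement.
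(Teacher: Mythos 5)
Your proposal is correct and follows exactly the route the paper intends: the paper states this result as an immediate consequence of Lemma \ref{lem-orthog-ideals} together with the reductivity of the Lie algebra of a compact group, and your argument simply spells out the details of that deduction (differentiating $G$-invariance to get $\g$-invariance, applying the lemma first to $[\g,\,\g]\oplus\z(\g)$ and then to each $\g_i$ against its complementary ideal, and a dimension count for the last claim). Nothing is missing.
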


We next observe a fact which is interesting in its own right as the  subgroup involved is not assumed to be connected.

\begin{lemma}\label{surpring-lem}
Let $ G $ be a connected compact Lie group, and $ H\subset G $ be a closed subgroup which is not necessarily connected. 
Let $ \p\subset \g $ be a subspace such that  ${\rm Ad} (H) (\p) \subseteq \p$. Assume that $\<>$ is a $ G $-invariant inner 
product on $ \g $. Then the Lie group $ H $ acts trivially on the space $ (\p\cap [\g,\,\g])^\perp \cap \p $.
\end{lemma}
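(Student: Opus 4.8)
The plan is to work entirely with the ${\rm Ad}$-action of $H$ on $\g$ and to exploit the orthogonal reductive decomposition $\g \,=\, \z(\g) \oplus [\g,\,\g]$. Set $\u \,:=\, \p \cap [\g,\,\g]$ and $W \,:=\, (\p \cap [\g,\,\g])^\perp \cap \p$, so that $W$ is the target space of the statement. First I would record three invariance facts. Since $[\g,\,\g]$ and $\z(\g)$ are characteristic ideals, they are ${\rm Ad}(H)$-invariant; hence $\u$, being the intersection of two $H$-invariant subspaces, is $H$-invariant; and since $\langle\cdot,\,\cdot\rangle$ is $G$-invariant (in particular $H$-invariant), the orthogonal complement $\u^\perp$ taken inside $\g$ is $H$-invariant, whence $W \,=\, \u^\perp \cap \p$ is $H$-invariant as well.

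The crucial preliminary observation is that $H$ acts trivially on $\z(\g)$. Although $H$ itself need not be connected, we have $H \subseteq G$ with $G$ connected; and for any $X \in \g$ and $Z \in \z(\g)$ one has ${\rm ad}(X)Z \,=\, [X,\,Z] \,=\, 0$, so ${\rm Ad}(\exp X)|_{\z(\g)} \,=\, \exp({\rm ad}\, X)|_{\z(\g)} \,=\, {\rm Id}$. The set of $g \in G$ fixing $\z(\g)$ pointwise is therefore a subgroup containing a neighborhood of the identity, hence all of $G^0 \,=\, G$; in particular every $h \in H$ fixes $\z(\g)$ pointwise. This is the only place where the connectedness of the ambient group $G$ is used, and it is precisely what lets us bypass the possible disconnectedness of $H$.

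With these in hand the main step is short. By Corollary \ref{lem-ortho-center-gg} the decomposition $\g \,=\, \z(\g) \oplus [\g,\,\g]$ is orthogonal, giving the associated orthogonal projections onto $\z(\g)$ and onto $[\g,\,\g]$. Fix $w \in W$ and write $w \,=\, w_z + w_s$ with $w_z \in \z(\g)$ and $w_s \in [\g,\,\g]$. For $h \in H$, since $h$ fixes $w_z$, we obtain ${\rm Ad}(h)w - w \,=\, {\rm Ad}(h)w_s - w_s \,\in\, [\g,\,\g]$. On the other hand ${\rm Ad}(h)w - w \in \p$, because $\p$ is $H$-invariant and $w \in \p$; therefore ${\rm Ad}(h)w - w \,\in\, \p \cap [\g,\,\g] \,=\, \u$. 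But $w \in W \subseteq \u^\perp$ and, by the $H$-invariance of $W$, also ${\rm Ad}(h)w \in W \subseteq \u^\perp$, so ${\rm Ad}(h)w - w \in \u^\perp$. Since $\u \cap \u^\perp \,=\, \{0\}$, this forces ${\rm Ad}(h)w \,=\, w$. As $w \in W$ and $h \in H$ were arbitrary, $H$ acts trivially on $W$, as required.

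I would flag the triviality of the $H$-action on $\z(\g)$ as the one genuinely substantive point; the remainder is the clean ``an element lying in both $\u$ and $\u^\perp$ must vanish'' argument. One should also take care that $(\p\cap[\g,\,\g])^\perp$ in the statement is read as the orthogonal complement inside $\g$ (not inside $[\g,\,\g]$), and that the projections invoked are the $H$-equivariant orthogonal ones furnished by Corollary \ref{lem-ortho-center-gg}.
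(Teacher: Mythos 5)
Your proof is correct and follows essentially the same route as the paper's: decompose an element of $(\p\cap[\g,\,\g])^\perp\cap\p$ along $\g=\z(\g)\oplus[\g,\,\g]$, use that $H$ fixes the $\z(\g)$-component to conclude ${\rm Ad}(h)w-w\in\p\cap[\g,\,\g]$, and note this difference also lies in $(\p\cap[\g,\,\g])^\perp$, hence vanishes. The only difference is that you spell out the triviality of the $H$-action on $\z(\g)$ (via connectedness of $G$) and the $H$-invariance of the relevant subspaces, which the paper leaves implicit.
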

\begin{proof}
	Recall that $ \g\,=\,\z(\g) \oplus [\g,\,\g] $. For any $ Z\,\in\, (\p\cap [\g,\,\g])^\perp \cap \p  $, we can write
	$Z\,=\,X+Y $ where $ X\,\in\, \z(\g) $ and $ Y\,\in\, [\g,\,\g] $. Let $ h \,\in\, H $. Then $ {\rm Ad} (h) Z - Z
	\,\in\,  (\p\cap [\g,\,\g])^\perp \cap \p $. On the other hand ${\rm Ad}(h)Y  - Y\, \in \, [\g,\,\g]$. Since 
	$ {\rm Ad} (h) Z - Z  \,=\,  {\rm Ad}(h)Y  - Y $, we  also have $ {\rm Ad} (h) Z - Z  \,\in\, \p\cap [\g,\,\g]$. 
	Thus $ {\rm Ad} (h) Z - Z\,=\,0$. This completes the proof.
\end{proof}

The following is a useful fact on the representations of compact Lie groups, which we state below, skipping the proof.  It  will be needed later. 

\begin{proposition}\label{representation-proposition} Let $K$ be a compact Lie group which is not necessarily connected and $V$ be a 
finite-dimensional vector space over $\R$. Assume that $\phi \,:\, K \,\lto\, {\rm GL} \, (V)$ is  a smooth representation of $K$.  Let $V^K $ 
be the space of fixed points in $V$ under the action of $K$, that is, $V^K\,=\, \{ v \,\mid\, \phi (g) (v)\, =\,v \text{ for all } g \,\in\, K\}$ and let 
$(K -1) V$ be the $\R$-span of the elements of the set $\{ \phi (g) v -v \,\mid\, g \,\in\, K, \, v \,\in\, V\}$. 
	\begin{enumerate}
		\item Then  $(K-1) V$ is the  unique $K$-invariant complement of $V^K$. In particular, 
		$V \,=\, V^K  \oplus (K-1) V$.
		\item  Let $ \Bi $ be any $ K $-invariant  bilinear form on $ V $. Then $V \,=\, V^K  \oplus (K-1) V$ is an  orthogonal direct sum with respect to $ \Bi $.
	\end{enumerate}		
\end{proposition}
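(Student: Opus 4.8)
The plan is to exploit the compactness of $K$ through the averaging (Reynolds) operator. Let $dg$ denote the normalized Haar measure on $K$ (of total mass $1$), which exists precisely because $K$ is compact; crucially, this is available regardless of whether $K$ is connected, which is what lets us treat the disconnected case on the same footing. Define
$$P \,:=\, \int_K \phi(g)\, dg \,\in\, {\rm End}_\R(V).$$
First I would record the standard properties of $P$: using left-invariance of $dg$ one checks $\phi(h) P \,=\, P$ for all $h \in K$, so ${\rm Im}\, P \subseteq V^K$; and since $P$ restricts to the identity on $V^K$, one gets $P^2 \,=\, P$ and ${\rm Im}\, P \,=\, V^K$. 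Thus $P$ is a projection onto $V^K$, giving $V \,=\, V^K \oplus \ker P$.

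The key step is to identify $\ker P \,=\, (K-1)V$. For the inclusion $(K-1)V \subseteq \ker P$, apply $P$ to a generator $\phi(g)v - v$ and use right-invariance of $dg$ to get $P\phi(g)v \,=\, Pv$, hence $P(\phi(g)v - v) \,=\, 0$. For the reverse inclusion, write $v - Pv \,=\, \int_K (v - \phi(g)v)\, dg$; since each integrand lies in the finite-dimensional (hence closed) subspace $(K-1)V$, so does the integral, which gives $\ker P \,=\, ({\rm Id} - P)(V) \subseteq (K-1)V$. Combined with the $K$-invariance of $(K-1)V$ (immediate from $\phi(h)(\phi(g)v - v) \,=\, (\phi(hg)v - v) - (\phi(h)v - v)$), this shows that $(K-1)V$ is a $K$-invariant complement of $V^K$.

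For uniqueness, suppose $W$ is any $K$-invariant complement of $V^K$. For $w \in W$, invariance of $W$ under $\phi$ together with integration gives $Pw \,=\, \int_K \phi(g)w\, dg \in W$, while also $Pw \in V^K$; since $W \cap V^K \,=\, \{0\}$ this forces $Pw \,=\, 0$, so $W \subseteq \ker P \,=\, (K-1)V$. A dimension count $\dim_\R W \,=\, \dim_\R V - \dim_\R V^K \,=\, \dim_\R (K-1)V$ then yields $W \,=\, (K-1)V$, completing part (1).

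Finally, part (2) follows by a direct computation. Given a $K$-invariant bilinear form $\Bi$, for $u \in V^K$ and a generator $\phi(g)v - v$ of $(K-1)V$, invariance gives $\Bi(u,\, \phi(g)v) \,=\, \Bi(\phi(g^{-1})u,\, v) \,=\, \Bi(u,\, v)$ (using $\phi(g^{-1})u \,=\, u$), hence $\Bi(u,\, \phi(g)v - v) \,=\, 0$, and symmetrically $\Bi(\phi(g)v - v,\, u) \,=\, 0$; by bilinearity the summands $V^K$ and $(K-1)V$ are mutually orthogonal, so the decomposition of part (1) is orthogonal for $\Bi$. I do not expect a serious obstacle: the whole argument rests on the existence of Haar measure, and the only points requiring slight care are the identification $\ker P \,=\, (K-1)V$ and the dimension count that pins down uniqueness.
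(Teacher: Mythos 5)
Your proof is correct and complete: the averaging operator $P=\int_K\phi(g)\,dg$ (well defined by compactness, bi-invariant since compact groups are unimodular) is exactly the right tool, and each step — $P^2=P$ with image $V^K$, the identification $\ker P=(K-1)V$, the uniqueness via $Pw\in W\cap V^K=\{0\}$ plus a dimension count, and the orthogonality computation for an invariant bilinear form — checks out. Note that the paper itself states this proposition \emph{without} proof (it is introduced as a ``simple yet useful fact\ldots skipping the proof''), so there is nothing to compare against; your Haar-measure argument is the standard one the authors evidently had in mind.
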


\section{Proofs of the main results}\label{section-3rd-4th-cohomology}
In this section, we will prove Theorem \ref{thm-3rd-coh-general} and Theorem \ref{thm-4th-coh-general}.  We will use an equivariant version of Cartan's theorem in the proofs, namely Theorem \ref{Cartan-thm-disconnected} (a proof of this version
is given in \cite{BCM2})

It is important to keep in mind that the following convention and notation will be used in rest of this section.

\begin{enumerate}[label = {{\bf  C}.\arabic*}]
\item~ \label{convension1}	
In what follows whenever we have a vector space $V$ and subspaces $V_1,\, \dots,\, V_k$ of it such that
$V \,=\, V_1 \oplus \cdots \oplus V_k$, for every $i \,=\, 1,\, \dots,\, k$
the dual $V_i^*$ is identified with the subspace of $V^*$ consisting all elements that vanish on
$V_j$ for every $j\, \not=\, i$. With this identification, we have the 
direct sum decomposition $V^* \,=\, V_1^* \oplus \cdots \oplus V_k^*$. Further,
for all $1 \,\leq\, i,\, j \,\leq\, k$, let
$V_i^* \vee V_j^*\, \subset\, S^2 (V^*)$ be the subspace defined by
$V_i^* \vee V_j^* \,:=\,{ \rm Span}\{f \vee g \, \mid\, f \,\in\, V_i^*,\, g \,\in\, V_j^*\}$. Then
we have $S^2 (V^*) \,= \,\bigoplus_{i \leq j} V_i^* \vee V_j^*$. 
We will use this as well as the analogous decompositions of
$S^r (V^*)$ and $\wedge^r V^*$. 

\medskip
\item~\label{convension2}
 In this section we will consider a general connected compact Lie group $G$ and a closed subgroup $H$, which is not necessarily connected. We fix a $G$-invariant inner product $\<>$ on $\g$. 
Recall that $P^1_\g\,=\, \{f\in \g^* \,\mid\, f|_{[\g,\,\g]}\, =\,0\} $. It follows that the coadjoint action of $G$ on $\g^*$ keeps $P_\g^1$ invariant.
We now choose a $G$-invariant inner product $\<>^*$ on $P_\g^1$ (the latter inner product
may be obtained as the restriction of a $G$-invariant inner product on $\g^*$).

\medskip
\item ~ \label{convension3}
Let $j^* : \g^* \,\lto\,  
\h^*$ be  the map  as in \eqref{defn-j-star-map}.
Observe that $j^* ( P_\g^1) \subset (\h^*)^H$. Let 
$\Re \,:\, P_\g^1\,\longrightarrow\, (\h^*)^H$ denote the restriction 
of $j^*$ to $P_\g^1$. Thus $\Re$
maps $f\,\longmapsto\, f|_\h$ for all
$f\,\in\, P_\g^1$. 
\end{enumerate}

As the coadjoint action of $H$ on $\g^*$ keeps $P_\g^1$ invariant, it is easy to see that $\ker \Re$ is a $H$-invariant subspace of $ P_\g^1$. Moreover, we have the following $H $-invariant orthogonal decomposition:
\begin{align}\label{decomposition-Pg1-ker-Rgh-ker-perp}
P_\g^1 \,=\,   \ker \Re  \oplus ( \ker \Re )^\perp.
\end{align}

Consider  the map $\nb^1$:
\begin{align}\label{map-nbl1}
\nb^1\colon 1\ox P^1_\g\, \lto\,  (\h^*)^H\ox 1 \quad \text{ given by } \quad \nabla^1( 1\ox f ):=  f|_\h\ox 1\,.
\end{align}
\begin{align}\label{ker-nbl}
\text{Observe that } \, \,{\rm ker\,} \nb^1  \, \simeq   \ker \Re =    \, \{f\,\in\, \g^* \,\mid \,f|_{[\g,\,\g]+\h} \,=\,0\}\,
\simeq\,\big(\frac{\g}{[\g,\,\g]+\h}\big)^* \,.
\end{align}

\begin{lemma}\label{lem-ker-im-nb1}
Let $\nb^1\,\colon\, 1\ox P^1_\g\, \lto\,  (\h^*)^H\ox 1$ be as in \eqref{map-nbl1}. Then 
$$
\dim    {\rm Im} \nb^1 \,=\, \dim   P_\g^1 - \dim   {\ker} \nb^1    \,=\, \dim  ([\g,\,\g]+\h) - \dim   [\g,\,\g]  \,  =\, \dim   \b.
$$
In particular, $~{\Im {\nb}^1}  = (\b^*)^H\ox 1$ and  $(\b^*)^H = \b^*$.  
\end{lemma}

\begin{proof}
See \cite[Lemma 4.1]{BCM2} for a proof. 
\end{proof}

We will now return to the direct-sum decompositions as in Section \ref{prep section} and observe some more facts associated with  
actions of $H$ on some of the summands. These facts will be used later.

Recall that all the direct summands in Section \ref{prep section} are $H$-invariant under the adjoint action. 
Using either Lemma \ref{lem-ker-im-nb1} or Lemma \ref{surpring-lem}, it follows that, although $H$ is not necessarily connected, it fixes all the elements of $\b^*$.  
It is now easy to observe
that $\h^H \,\subset\, {\z} (\h)$. Thus $\h^H\,=\, {\z} (\h)^H$ and $[\h, \, \h]^H = 0, (H-1)[\h,\, \h] \,=\, [\h,\, \h]$. As $\b \,=\, \b^H
\oplus (H-1) \b$ and $\b\,=\, \b^H$, it follows that $(H-1) \b \,=\, 0$. Thus using \eqref{h decomposition} we obtain 
\begin{align}
	\h \,=\, \a^H \oplus (H-1) \a \oplus \b \oplus [\h, \,\h]. \label{h decomposition refined}
\end{align}
Moreover, we have
\begin{align}
	(H-1) \h \,=&\, (H-1) \a \oplus [\h, \,\h], \label{h decomposition component1}\\
	(H-1) \z(\h) \,=\, (H-1) \a  \, \,&\text{ and }\, \h^H \,=\, {\z} (\h)^H\, =\, \a^H \oplus \b.\label{h decomposition component2}
\end{align}

It may be further observed that the decompositions as in \eqref{h decomposition refined}, \eqref{h decomposition component1} and \eqref{h decomposition component2} are $H$-invariant and orthogonal with respect to the  ($G$-invariant) inner product $\<>$  introduced in Section \ref{prep section}.

\subsection{Description of  third cohomology}\label{sec-description-third-coh}
Here we will prove Theorem \ref{thm-3rd-coh-general}.
We follow the strategy as outlined in Section \ref{strategy}.

The differential  $\nabla^2 \,\colon\,(\h^*)^H\ox 1 \ \bigoplus\  1\ox\wedge^2 P_\g^1\ \lto\  (\h^*)^H\ox P_\g^1 $ is given by 
\begin{equation}\label{def-nabla2}
	\begin{split}
		\nabla^2(\psi \ox1)\ &=\ 0 \,\, , \hspace{4cm} \,  \psi\,\in\, (\h^*)^H  \,,\\
		\nabla^2( 1\ox f_1\wedge f_2) \,&=\,  f_1|_\h\ox f_2 \, -\,  f_2|_\h\ox f_1\,\, , \quad \, \,f_1,\,f_2\,\in\, P^1_\g .
	\end{split}
\end{equation} 

The map $\nabla^i$ for $i=3,4$ is defined in \eqref{def-nabla3}, \eqref{def-nabla4}.
We first compute  $ {\ker }\nb^2 $ and $ {\Im}\nb^2 $.  Recall that $\Re$ be as in \eqref{convension3}.     Let 
\begin{align}
\Ab_1\,&:=\,\text{Span}\{ f_1|_\h\ox f_2 \, -\, f_2|_\h\ox f_1 \,\,\mid\,\, f_1,f_2\,\in\, (\ker\, {\Re})^\perp \}\,,  \label{alt-part-ox}\\ 
	\Sb_1\,&:=\,\text{Span}\{ f_1|_\h\ox f_2  \,+\, f_2|_\h\ox f_1 \,\,\mid\,\, f_1,f_2\,\in\, (\ker\, {\Re})^\perp \}\,.\label{sym-part-ox}
\end{align}

\begin{lemma}  \label{lem-ker-im-nb2}
Let $\Ab_1, \Sb_1$ be defined as above. Then the following statements hold:
\begin{enumerate}
\item $ {\ker} \nabla^i|_{1\ox (\wedge^i  P_\g^1)}\,\simeq \, 1\ox ( \wedge^i{\ker}{\Re})\,\simeq \, \wedge ^i\Big(  \frac{\g}{[\g,\,\g]+\h}\Big)^*\,,\quad \text{ for } i = 2, 3,4.$	\vspace{1.2mm}
	
\item $ \dim   \Im\nb^2 \,=\, \dim  (\wedge^2\, P_\g^1)  \,\,-\,\, \dim  (\wedge^2\, \ker {\Re}) \vspace{1.5mm} \\   \,=\,     (\dim  \, P_\g^1 - \dim   \, \ker {\Re})    (\dim  \, P_\g^1 +\dim   \, \ker {\Re} -1)  /2  $. \vspace{1.2mm}

\item  	${\Im \nb^2} \,=\,\big(  \b^* \ox \ker\, {\Re}\big) \bigoplus  \Ab_1$.\vspace{1.2mm}

\item   $ \b^*\ox (\ker\, {\Re})^\perp \,=\, \Ab_1\oplus \Sb_1\,.$\vspace{1.0mm}
\item  ${\Im \nb^2} \bigoplus (\a^*)^H\ox  P_\g^1 \bigoplus \Sb_1 =   (\h^*)^H \ox P_\g^1\,$.
\end{enumerate}
\end{lemma}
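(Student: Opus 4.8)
The target is to establish the five assertions of Lemma \ref{lem-ker-im-nb2} describing $\ker \nb^2$ and $\Im \nb^2$ together with the orthogonal-type splitting in part (5). The guiding principle throughout is to exploit the $H$-invariant orthogonal decomposition $P_\g^1 = \ker\Re \oplus (\ker\Re)^\perp$ of \eqref{decomposition-Pg1-ker-Rgh-ker-perp}, together with the fact that $\Re$ restricts to an isomorphism from $(\ker\Re)^\perp$ onto its image, which by Lemma \ref{lem-ker-im-nb1} equals $(\b^*)^H = \b^*$. I would first record this isomorphism explicitly and name it, say $\lambda := \Re|_{(\ker\Re)^\perp}\colon (\ker\Re)^\perp \xrightarrow{\sim} \b^*$; it is precisely the linear isomorphism that the multilinear-algebra machinery of \eqref{mapp-for-E0E2tld}--\eqref{decomposition-vvv30-wtld} was set up to handle.

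\textbf{Steps (1)--(2).} For part (1) I would apply Lemma \ref{lem-basic-im-ker-T_i} with $V = P_\g^1$, $T = \Re$, and hence $V_0 = \ker\Re$: the maps $\nb^i|_{1\ox\wedge^i P_\g^1}$ are exactly the maps $T_i$ of that lemma (up to the harmless factor $1\ox(-)$), so $\ker T_i = \wedge^i V_0 = \wedge^i \ker\Re$, and the identification $\ker\Re \simeq (\g/([\g,\g]+\h))^*$ from \eqref{ker-nbl} gives the stated wedge-power form. Part (2) is then a dimension count: $\Im\nb^2$ on $1\ox\wedge^2 P_\g^1$ has dimension $\dim\wedge^2 P_\g^1 - \dim\wedge^2\ker\Re$ by rank--nullity, and the second displayed equality is elementary binomial arithmetic with $n_0 = \dim\ker\Re$ and $n' = \dim P_\g^1 - \dim\ker\Re$. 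These two parts are routine once the dictionary with Lemma \ref{lem-basic-im-ker-T_i} is in place.

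\textbf{Steps (3)--(4).} For part (3), I would decompose a general element $f_1\wedge f_2 \in \wedge^2 P_\g^1$ using $P_\g^1 = \ker\Re \oplus (\ker\Re)^\perp$ into three types: both factors in $(\ker\Re)^\perp$, one in each, and both in $\ker\Re$. The last type lies in $\ker\nb^2$, the mixed type produces the summand $\b^*\ox\ker\Re$ (here I use that $\Re$ kills $\ker\Re$ and maps $(\ker\Re)^\perp$ isomorphically onto $\b^*$), and the first type produces $\Ab_1$ exactly by the defining formula \eqref{def-nabla2} for $\nb^2$ compared with \eqref{alt-part-ox}. I must check these two summands meet only in zero, which follows because $\Ab_1 \subset \b^*\ox(\ker\Re)^\perp$ while the other summand sits in $\b^*\ox\ker\Re$, and these are in different tensor factors. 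Part (4) is the key structural input: applying the decomposition \eqref{decomposition-vv20-wtld} (or its symmetric-antisymmetric content) with the isomorphism $\lambda$ above, $\Ab_1$ and $\Sb_1$ are the images of the antisymmetrization and symmetrization operators, hence are the two eigenspaces whose direct sum is $\b^*\ox(\ker\Re)^\perp$; I would verify that the defining spanning sets \eqref{alt-part-ox}, \eqref{sym-part-ox} match $\widetilde E_2$, $\widetilde E_0$ under the identification $\b^* = \lambda((\ker\Re)^\perp)$, $W$ playing no role here since both tensor slots range over $P_\g^1$.

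\textbf{Step (5) and the main obstacle.} Assertion (5) is the dimension-matching payoff. Using \eqref{h decomposition component2}, $(\h^*)^H = (\a^*)^H \oplus \b^*$, so $(\h^*)^H\ox P_\g^1 = \big((\a^*)^H\ox P_\g^1\big) \oplus \big(\b^*\ox P_\g^1\big)$, and the second factor splits as $\b^*\ox\ker\Re \,\oplus\, \b^*\ox(\ker\Re)^\perp = \b^*\ox\ker\Re \oplus \Ab_1 \oplus \Sb_1$ by part (4). Combining with part (3) gives $\b^*\ox P_\g^1 = \Im\nb^2 \oplus \Sb_1$, and adding back the $(\a^*)^H\ox P_\g^1$ summand yields the claimed identity. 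The main obstacle, and the point deserving the most care, is the bookkeeping that $(\a^*)^H\ox P_\g^1$, $\Sb_1$, and $\Im\nb^2$ are genuinely independent and jointly fill $(\h^*)^H\ox P_\g^1$: one must confirm that $\Im\nb^2$ has no component in the $(\a^*)^H$ tensor direction, which holds because $\Re$ maps $P_\g^1$ into $\b^*$ (indeed $\Re$ factors through $j^*$ restricted to $P_\g^1$, whose image by Lemma \ref{lem-ker-im-nb1} is exactly $\b^*$), so every term of \eqref{def-nabla2} lives in $\b^*\ox P_\g^1$. Once that containment is pinned down, the decomposition is forced by the dimension count in part (2), and no delicate convergence or $H/H^0$-action subtlety enters at this stage.
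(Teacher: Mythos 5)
Your proposal is correct and follows essentially the same route as the paper's proof: parts (1)--(3) come from Lemma \ref{lem-basic-im-ker-T_i} applied with $T = \Re$ together with the identification \eqref{ker-nbl}, part (4) comes from transporting the symmetric/antisymmetric splitting through the isomorphism $\Re\colon (\ker\Re)^\perp \lto \b^*$, and part (5) comes from combining $(\h^*)^H = (\a^*)^H \oplus \b^*$ with \eqref{decomposition-Pg1-ker-Rgh-ker-perp}, (3) and (4). The only nitpick is that \eqref{decomposition-vv20-wtld} literally concerns $\lambda(V)\ox(W\wedge V)$ rather than the wedge-free $\b^*\ox(\ker\Re)^\perp$ needed in (4), so you must invoke the elementary wedge-free analogue instead, which your parenthetical ``symmetric-antisymmetric content'' already acknowledges.
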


\begin{proof} 
In view of Lemma  \ref{lem-ker-im-nb1}, the map $ \nabla^i|_{1\ox (\wedge^i  P_\g^1)}\,\colon   1\ox (\wedge^i  P_\g^1) \, \lto \,\b^* \ox (\wedge^{i-1}  P_\g^1)$ is given by 
	
	$\nabla^i(1\ox f_1\wedge \cds \wedge f_i )\,:= \sum_{j=1}^i (-1)^{j-1} \Re{(f_j)}\ox( f_1\wedge \cds  \wedge 	\widehat{f}_j  \wedge \cds  \wedge f_i)$, for $ i=2,3,4\,.$
	
	\noindent
	Then 	$\ker  \nabla^i|_{1\ox (\wedge^i  P_\g^1)} \, \simeq \,1\ox (\wedge ^i \ker \Re) $. Now   $(1)$ follows from \eqref{ker-nbl}.  $(2)$ follows from $(1)$.    
	Clearly,  $\big(  \b^* \ox \ker\, {\Re}\big) \bigoplus  \Ab_1\, \subseteq {\Im \nb^2}$. Since $\dim  \big(  \b^* \ox \ker\, {\Re}\big) \bigoplus  \Ab_1 \, =\,  \dim   \Im\nb^2 $, $(3)$ follows.   
	To prove $ (4) $, we will use the linear isomorphism $ {\Re}\colon  (\ker\, {\Re})^\perp \, \lto\, \b^*$.
	Finally, in view of the convention mentioned at \eqref{convension1} it follows from the decomposition \eqref{h decomposition component2} that $(\h^*)^H\,=\, (\a^*)^H\oplus \b^*$. Now $(5)$  follows from  \eqref{decomposition-Pg1-ker-Rgh-ker-perp}, (3) and (4).
\end{proof}

\begin{lemma}\label{lem-equivalence-G-inv-forms}
	Let $G$ be a connected compact Lie group, and $ H\subseteq G $ be a closed subgroup which is not necessarily connected.    
	Let $\g$ be the Lie algebra of $G$ and $\h \subset \g$ be the Lie subalgebra corresponding to $H$.	 Let $\Bi$ be a $G$-invariant symmetric
	bilinear form on $\g$. Let $ \a \,:=\,\z(\h)\cap [\g,\,\g] $ as in \eqref{defn of a}. Then the
following are equivalent.
	\begin{enumerate}
		\item 	${\Bi}|_{ ( \h\,\cap \,[\g,\,\g] ) \,\times \,(\h\,\cap [\g,\,\g])} = 0$.
		\item ${\Bi}|_{ \big( (\z(\h)\cap [\g,\,\g]) + [\h,\, \h] \big) \times \big( (\z(\h)\cap [\g,\,\g]) + [\h, \,\h] \big)} = 0$.
		\item ${\Bi}|_{\a\times\a} =0 \text{ and } {\Bi}|_{ [\h, \,\h] \times [\h,\, \h] } = 0.$ 
		\item ${\Bi}|_{\a^H  \times \a^H} =0, \,\, {\Bi}|_{(H-1)\a \times (H-1)\a} =0 \text{ and } {\Bi}|_{ [\h,\, \h] \times [\h,\, \h] } = 0.$
		\item ${\Bi}|_{\a^H  \times \a^H} =0 \text{ and }  {\Bi}|_{(H-1)\h\times (H-1)\h} =0$.
	\end{enumerate}
\end{lemma}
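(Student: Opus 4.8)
The plan is to prove the five conditions equivalent by running the chain $(1)\Leftrightarrow(2)\Leftrightarrow(3)\Leftrightarrow(4)\Leftrightarrow(5)$, and the guiding principle throughout is that each relevant direct sum decomposition of $\h\cap[\g,\,\g]$ from Section \ref{prep section} is in fact $\Bi$-orthogonal, so that every question of the form ``$\Bi$ vanishes on $U\times U$'' reduces to the vanishing of $\Bi$ on the individual summands of $U$. The one preliminary I would record at the outset is that, since $\Bi$ is $G$-invariant it is ${\rm ad}(\h)$-invariant, hence its restriction to $\h$ is an $\h$-invariant symmetric bilinear form; applying Lemma \ref{lem-orthog-ideals} to the ideals $[\h,\,\h]=[[\h,\,\h],[\h,\,\h]]$ and $\z(\h)$ of $\h$ then yields $\Bi([\h,\,\h],\,\z(\h))=0$, and in particular $[\h,\,\h]\perp\a$ with $\a=\z(\h)\cap[\g,\,\g]$.

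First I would observe that $(1)$ and $(2)$ are literally the same statement: from the decomposition $\h\cap[\g,\,\g]=[\h,\,\h]\oplus(\z(\h)\cap[\g,\,\g])$ recorded in Section \ref{prep section} we have $(\z(\h)\cap[\g,\,\g])+[\h,\,\h]=\h\cap[\g,\,\g]$, so the two restrictions coincide. Next, for $(2)\Leftrightarrow(3)$, the implication $(2)\Rightarrow(3)$ is immediate since $\a$ and $[\h,\,\h]$ both lie in $\h\cap[\g,\,\g]$. Conversely, writing a general pair of vectors of $\a\oplus[\h,\,\h]$ as $a_1+t_1,\,a_2+t_2$ with $a_i\in\a,\,t_i\in[\h,\,\h]$, the cross terms $\Bi(a_1,\,t_2)$ and $\Bi(t_1,\,a_2)$ vanish by the orthogonality $[\h,\,\h]\perp\a$ just established, so the diagonal vanishing in $(3)$ forces $\Bi$ to vanish on all of $\a\oplus[\h,\,\h]=\h\cap[\g,\,\g]$, giving $(3)\Rightarrow(2)$.

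For $(3)\Leftrightarrow(4)$ I would use that $\a$ is $H$-invariant and $\Bi|_{\a}$ is $H$-invariant, so Proposition \ref{representation-proposition} furnishes the decomposition $\a=\a^H\oplus(H-1)\a$, which by part $(2)$ of that proposition is $\Bi$-orthogonal. Hence $\Bi|_{\a\times\a}=0$ holds if and only if $\Bi$ vanishes on each of $\a^H$ and $(H-1)\a$ separately, the cross term being zero; carrying along the common condition $\Bi|_{[\h,\,\h]\times[\h,\,\h]}=0$ yields the equivalence. Finally, for $(4)\Leftrightarrow(5)$ I would invoke $(H-1)\h=(H-1)\a\oplus[\h,\,\h]$ from \eqref{h decomposition component1}; this is again $\Bi$-orthogonal, since $(H-1)\a\subseteq\z(\h)$ is orthogonal to $[\h,\,\h]$ by Lemma \ref{lem-orthog-ideals}. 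Therefore $\Bi|_{(H-1)\h\times(H-1)\h}=0$ is equivalent to the simultaneous vanishing of $\Bi$ on $(H-1)\a$ and on $[\h,\,\h]$, and together with the shared condition $\Bi|_{\a^H\times\a^H}=0$ this is precisely $(4)$.

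The computations are routine once the orthogonality relations are in place; the only point requiring genuine care, rather than a true obstacle, is the justification at each step that the cross terms vanish, given that $\Bi$ is merely symmetric and is \emph{not} assumed positive definite. This is exactly why I would rely on Lemma \ref{lem-orthog-ideals} (valid for arbitrary invariant forms) and on Proposition \ref{representation-proposition}$(2)$ (stated for any $K$-invariant bilinear form) rather than on any inner-product argument.
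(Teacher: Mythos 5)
Your proof is correct and takes essentially the same approach as the paper's: both rest on establishing that $\a^H$, $(H-1)\a$ and $[\h,\,\h]$ are mutually orthogonal with respect to $\Bi$, via Lemma \ref{lem-orthog-ideals} (applied to invariant forms, not inner products) and Proposition \ref{representation-proposition}(2). The paper records this mutual orthogonality once and concludes; you merely unwind the same fact into the explicit chain $(1)\Leftrightarrow(2)\Leftrightarrow(3)\Leftrightarrow(4)\Leftrightarrow(5)$ with the cross-term computations written out.
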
	 
\begin{proof} Observe that  $(H-1) \h\, =\, (H-1) \z (\h) \oplus [ \h,\, \h]$ is a Lie subalgebra of $\h$. As ${\Bi}|_{\h \times \h}$ is $H$-invariant
	and $\a^H \subset \z (\h)^H$ it follows using Lemma \ref{lem-orthog-ideals} and Proposition \ref{representation-proposition} that
	$\a^H, (H-1) \a$ and $[\h, \,\h]$ are mutually orthogonal with respect to $\Bi$. This completes the proof.
\end{proof}

\begin{remark}
	One may use Lemma \ref{lem-equivalence-G-inv-forms} to have alternative descriptions of $ \NC_{\g,\h}$ involving $\z(\h)\cap [\g,\,\g]$
	and $[\h,\,\h]$. Note that the definition of  $\NC_{\g, \h}$ does not involve $H/H^0$ and involves only data related to the Lie algebras $ \g$ and $\h $.  On the other hand, the definition of $ \CC_{\g,H} $ does involve the Lie group $ H $ (which is not necessarily connected) along with the Lie algebras $\g,\h$. 
	However, if $ H^0 $ is a semisimple Lie group and all the simple factors of $ H $ are pairwise non-isomorphic, then the component group $ H/H^0 $ does not play any role in the definition of the vector space $ \CC_{\g,H}$; see Lemma \ref{lem-decomposition-of-S(h)-new}.   \qed
\end{remark}

Recall that for a  semisimple  Lie algebra $\k$, {\it the number of simple factors of  $\k$ is denoted by $\# \k$.}

\begin{proposition}\label{coh-G-3}
 Let $G$ be a compact connected Lie group.
 Then 
 $$
 \dim   H^3(G ) \, = \  \# [\g,\, \g] + \dim   \wedge^3 \z(\g)\,.
 $$
\end{proposition}
\begin{proof}
 See \cite[Corollary I, p.\,205]{GHV-3}.  
\end{proof}

\begin{corollary} \label{cor-dim-Pg3}
Let $G$ be a compact connected Lie group, and $P_\g^3$ be the space of primitive three-forms. Then
 \[\dim   P_\g^3 \,=\, \#[\g,\, \g]  \,.\]
\end{corollary}
\begin{proof}
The proof follows from Proposition \ref{coh-G-3} and  Hopf's Theorem  which says that  for any compact connected Lie group $ G $,  the cohomology algebra $ H^*(G,\R ) \simeq \wedge P_\g$.
\end{proof}

Let $G$ be a compact connected Lie group, and $r \,:=\, \#[\g,\, \g]$.  Let 
\begin{align}\label{decomposition-g-simple-factors}
 \g\,=\, \z(\g)\,\oplus\, \g_1\,\oplus\,\cds\,\oplus\, \g_r \,,
\end{align} where $\g_i$'s are simple factors in $[\g,\,\g]$.  
Let ${\Bi}_{\g_i}$ denote the Killing form on $\g_i$, and 
$\pi^\g_{\g_i} : \g \to \g_i$ be the associated projection (see \eqref{notation-projection}).
For simplicity we will denote $\pi^\g_{\g_i}$ by $\pi_i$.  
Then  $\widetilde {\Bi}_{\g_i}^\g$ on $\g$ is defined, as  in \eqref{definition-B}, by
\begin{equation}\label{definition-B-tilde}
   \widetilde {\Bi}_{\g_i}^\g(X,\,Y)\,:=\, {\Bi}_{\g_i}(\pi_i(X),\,\pi_i(Y)) \,.
\end{equation}
As before, for simplicity, we will denote $ \widetilde {\Bi}_{\g_i}^\g$ by $\widetilde {\Bi}_{i}$.
Note that   $\widetilde {\Bi}_i\,\in\, S^2(\g^*)^\g$.  
We next define the map $\rho$ as in the following way.
\begin{align}\label{rho-3}
\rho\,\colon\, S^2(\g^*)^\g
\,\lto\, (\wedge^3 \g^*)^\g \,  \,     ;   \quad
\rho(\eta)(x,\,y,\,z)\,:= \,\eta([x,\,y],\,z).
\end{align}
It is also clear that $  \widetilde {\Bi}_i \not\in {\rm ker}\, \rho$ for all  $1\leq i \leq r$.

\begin{lemma}\label{lemma-Pg3-basis} 
Let $r \,:=\, \#[\g,\, \g]$.  Let $P_\g^3$ be the space of primitive three-forms.
 Then the set  $ \{\rho(\widetilde {\Bi}_i)\, \mid\, 1\leq i \leq r \}$ forms a basis of $ P_\g^3$. 
\end{lemma}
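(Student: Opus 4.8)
The plan is to show that $\{\rho(\widetilde{\Bi}_i) \mid 1 \le i \le r\}$ is a spanning set for $P_\g^3$ that is also linearly independent, and then invoke the dimension count $\dim_\R P_\g^3 = \#[\g,\,\g] = r$ from Corollary \ref{cor-dim-Pg3} to conclude it is a basis. First I would verify that each $\rho(\widetilde{\Bi}_i)$ actually lies in $P_\g^3$. Since $\widetilde{\Bi}_i \in S^2(\g^*)^\g$ and $\rho = -2\rho_\g$ with $\rho_\g$ the Cartan map, the image $\rho(\widetilde{\Bi}_i) = -2\rho_\g(\widetilde{\Bi}_i)$ lands in $(\wedge^3 \g^*)^\g$ and has degree $3$. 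To see it is \emph{primitive}, the cleanest route is to recall that the Cartan map sends $S^p(\g^*)^\g$ into the primitive subspace $P_\g^{2p-1}$ (this is part of the structure theory underlying the transgression/Cartan-map formalism set up in \S\ref{sec-transgression}); for $p = 2$ this gives $\rho_\g(S^2(\g^*)^\g) \subset P_\g^3$, so each $\rho(\widetilde{\Bi}_i)$ is primitive.

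Next I would establish linear independence of the $r$ elements $\rho(\widetilde{\Bi}_i)$. The key tool is that $\rho = -2\rho_\g$ is injective on the relevant space: by Remark \ref{rmk-decomposition-transgression} the kernel of $\rho_\g$ is $\sum_{r,s>0} S^r(\g^*)^\g \vee S^s(\g^*)^\g$, which in degree $2$ of $S(\g^*)^\g$ is trivial (there are no products of two positive-degree invariants sitting in $S^2$). Hence $\rho_\g$ restricted to $S^2(\g^*)^\g$ is injective, and it suffices to check that $\widetilde{\Bi}_1,\dots,\widetilde{\Bi}_r$ are linearly independent in $S^2(\g^*)^\g$. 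This is immediate from their construction in \eqref{definition-B-tilde}: by Corollary \ref{lem-ortho-center-gg} the decomposition $\g = \z(\g)\oplus\g_1\oplus\cdots\oplus\g_r$ is orthogonal for each invariant form, so $\widetilde{\Bi}_i$ is supported entirely on the block $\g_i$ and vanishes on $\z(\g)$ and on every $\g_j$ with $j \neq i$; thus a relation $\sum_i c_i \widetilde{\Bi}_i = 0$, evaluated on pairs from a single $\g_i$ where the Killing form $\Bi_i$ is nondegenerate, forces every $c_i = 0$.

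Having produced $r$ linearly independent elements inside $P_\g^3$, and knowing $\dim_\R P_\g^3 = r$ by Corollary \ref{cor-dim-Pg3}, I would conclude that these elements form a basis. The main obstacle, and the one point requiring care, is the claim that $\rho_\g$ carries invariant symmetric forms into \emph{primitive} antisymmetric forms rather than merely into $(\wedge^3\g^*)^\g$; this is the substantive input from the Weil-algebra/transgression theory and should be cited precisely (the relation $\rho_\g \circ \tau = \mathrm{Id}_{P_\g}$ of Lemma \ref{lemma-rho-tau-Id}, together with the fact that the Cartan map is a left inverse to transgression realized through primitive elements, pins down that the image sits in $P_\g$). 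Everything else — the degree bookkeeping, the triviality of $\ker\rho_\g$ in the lowest nontrivial degree, and the independence of the $\widetilde{\Bi}_i$ via orthogonality of simple factors — is routine and follows from results already established in the excerpt.
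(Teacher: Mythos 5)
Your overall strategy (exhibit $r$ primitive elements, prove linear independence, conclude by the dimension count of Corollary \ref{cor-dim-Pg3}) is the same as the paper's, and your first and third steps are fine. But the linear independence step contains a genuine error: you claim that $\ker\rho_\g$ meets $S^2(\g^*)^\g$ trivially, ``since there are no products of two positive-degree invariants sitting in $S^2$.'' This is false whenever $\z(\g)\neq 0$. Indeed $S^1(\g^*)^\g=(\g^*)^\g$ consists of the functionals vanishing on $[\g,\,\g]$, i.e.\ it is isomorphic to $\z(\g)^*$, so
$$
S^1(\g^*)^\g \vee S^1(\g^*)^\g \;=\; \z(\g)^*\vee\z(\g)^* \;\subset\; S^2(\g^*)^\g
$$
is a nonzero space of decomposables (for example for $\g=\u(2)=\R\oplus\s\u(2)$), and one checks directly from Definition \ref{definition-rho} that $\rho_\g(f\vee g)=0$ for $f,g\in(\g^*)^\g$, since such $f,g$ kill all brackets. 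So $\rho_\g$ is \emph{not} injective on $S^2(\g^*)^\g$ in general; your argument is valid only under the extra hypothesis that $\g$ is semisimple, whereas the lemma (and the main theorems it feeds into, where $\z(\g)$ and the quotient $\g/([\g,\,\g]+\h)$ appear explicitly) is stated for an arbitrary compact connected $G$.

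The gap is easily repaired, and the repair is exactly what the paper does: instead of pushing independence through an (invalid) injectivity claim, test the relation on the alternating side. Suppose $\sum_i\lambda_i\,\rho(\widetilde{\Bi}_i)=0$. Since $\g_i$ is simple and $\Bi_i$ is nondegenerate, choose $X_i,Y_i,Z_i\in\g_i$ with $\Bi_i([X_i,\,Y_i],\,Z_i)\neq 0$; evaluating the relation at $(X_i,Y_i,Z_i)$ kills every term with $j\neq i$ (because $\widetilde{\Bi}_j$ vanishes on $\g_i$) and yields $\lambda_i\,\Bi_i([X_i,\,Y_i],\,Z_i)=0$, hence $\lambda_i=0$. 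Alternatively, if you want to keep your kernel-based formulation, you must show that $\mathrm{Span}\{\widetilde{\Bi}_i\}\cap\ker\rho_\g=0$: any element of $\ker\rho_\g\cap S^2(\g^*)^\g$ of the form $\sum_i c_i\widetilde{\Bi}_i$ restricts to $c_i\Bi_i$ on $\g_i\times\g_i$, while elements of $\z(\g)^*\vee\z(\g)^*$ vanish there, forcing all $c_i=0$ — but note this is just the paper's evaluation argument in disguise, so the direct route is cleaner.
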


\begin{proof}
As the Killing form ${\Bi}_i$ is non-degenerate on $\g_i$, $\rho(\widetilde {\Bi}_i)\,\neq\, 0$ for $1\leq i \leq r$. In view of \cite[Theorem 
II, p. 244]{GHV-3}, $\rho(\widetilde {\Bi}_i )\,\in\, P_\g^3 $. It also follows from the definitions of $\rho$ and $\widetilde {\Bi}_i $ that 
$\{\rho(\widetilde {\Bi}_1) ,\,\dots,\, \rho(\widetilde {\Bi}_r)\} $ are linearly independent. 
Let $\lambda_i, \, i= 1, \dots, r$ be scalars in $\R$ such that  
$\sum_i 
\lambda_i\rho(\widetilde {\Bi}_i )\, =\,0$. Let $X_i,\,Y_i,\,Z_i\,\in \,\g_i, \, i = 1, \dots, r$ satisfying ${\Bi}_i 
([X_i,\,Y_i],\,Z_i)\,\neq\, 0$. If we 
substitute $X_i,\,Y_i,\,Z_i$ in the above relation, we will get $\lambda_i \,=\,0$
for all $i$.  Now the 
proof follows as $\dim  P_\g^3 \,=\,r$; see Corollary \ref{cor-dim-Pg3}.
\end{proof}

Now we define an explicit transgression which will be useful in the computation of the third and fourth cohomology groups.
Using the basis in the Lemma \ref{lemma-Pg3-basis} above, we first define a linear map
\begin{align}\label{def-tau3}
\widetilde \tau^3\colon P_\g^3 \ \lto\ S^2(\g^*)^\g\,;\quad  \widetilde \tau^3( \rho(\widetilde {\Bi}_i )) \,:=\, \widetilde  {\Bi}_i \,.
\end{align} 
We may choose a transgression $\tau = \oplus_i \tau^i$ such that $\tau^3 := \widetilde \tau^3$. 
As $\{\rho(\widetilde {\Bi}_i ) \,\mid \,1\,\leq\, i\,\leq\,  \#[\g, \,\g] \}$ is a basis of $  P_\g^3$,  we may further define 
$ \nabla^3( 1\ox \rho(\widetilde {\Bi}_i )) \,:=\, \widetilde {\Bi}_i |_{\h\times \h} \ox 1\, .$
 The map $\nabla^3$ is defined as follows: 

$\nabla^3\,\colon\, (\h^*)^H \ox P_\g^1 \ \bigoplus\  1\ox P_\g^3  \ \bigoplus\ 1\ox\wedge^3 P_\g^1\,\, \lto\, \
S^2(\h^*)^H \ox1 \ \bigoplus\ (\h^*)^H\ox \wedge^2P_\g^1 $ is given by 
\begin{align}
	\nabla^3(\psi \ox f)\ &=\  \psi \vee f|_\h \ox 1\,,  \qquad  f\,\in\, P_\g^1,   \nonumber   \\
	\nabla^3( 1\ox \rho(\widetilde {\Bi}_i)) \,&=\, \widetilde {\Bi}_i|_{\h\times \h} \ox 1\,,\quad {\rm \ see\ 
		\eqref{def-tau3}\,\, and\,\, Lemma\,\, \ref{lemma-Pg3-basis}}. \label{def-nabla3}
	\\
	\nabla^3( 1\ox f_1\wedge f_2\wedge f_3 ) \,&=\, f_1|_\h\ox f_2\wedge f_3 \ -\   f_2|_\h\ox f_1\wedge f_3\, + \,  f_3|_\h\ox f_1\wedge f_2  ,\, \, f_i\in P_\g^1.\nonumber
\end{align}
The next result will be used in the proof of Theorem \ref{thm-3rd-coh-general}.

\begin{lemma}\label{lem-ker-nb3-isom-NgH}
	There is an isomorphism between the following vector spaces:
$$ \NC_{\g,\h}\,\simeq\,    {\ker \nb^3}|_{    (\a^*)^H\ox ({\rm ker}{\Re} )^\perp  \bigoplus  \Sb_1 \bigoplus 1\ox P_\g^3 }\,.$$
\end{lemma}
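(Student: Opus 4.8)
The plan is to exhibit an explicit linear isomorphism by computing $\ker\nabla^3$ restricted to the relevant summands and identifying it with $\NC_{\g,\h}\,=\,\ker\Psi_{\g,\h}$. Recall from the formulas \eqref{def-nabla3} that $\nabla^3$ acts on the three types of generators as $\psi\ox f\mapsto \psi\vee f|_\h\ox 1$ for $f\in P_\g^1$, as $1\ox\rho(\widetilde{\Bi}_i)\mapsto \widetilde{\Bi}_i|_{\h\times\h}\ox 1$, and as the alternating expression on $1\ox\wedge^3 P_\g^1$. The target of $\nabla^3$ is $S^2(\h^*)^H\ox 1\,\bigoplus\,(\h^*)^H\ox\wedge^2 P_\g^1$. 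First I would observe that on the restricted domain $(\a^*)^H\ox(\ker\Re)^\perp\,\bigoplus\,\Sb_1\,\bigoplus\,1\ox P_\g^3$ the image of $\nabla^3$ lands entirely in the first factor $S^2(\h^*)^H\ox 1$, because each of these three generator-types maps into $S^2(\h^*)^H\ox 1$ (the $\wedge^3 P_\g^1$ summand, which contributes to the second factor, is excluded). Thus the restricted kernel is the kernel of a map into $S^2(\h^*)^H\ox 1$, which I can analyze componentwise after suitably decomposing the symmetric square $S^2(\h^*)^H$.

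The key computation is to understand the image of each summand. For $1\ox P_\g^3$, by Lemma \ref{lemma-Pg3-basis} the elements $\rho(\widetilde{\Bi}_i)$ form a basis and map to $\widetilde{\Bi}_i|_{\h\times\h}$, which by construction \eqref{definition-B-tilde} and Corollary \ref{lem-ortho-center-gg} are supported on $\h\cap[\g,\,\g]$; hence their restrictions record exactly the data measured by $\Psi_{\g,\h}$. For $(\a^*)^H\ox(\ker\Re)^\perp$ and $\Sb_1$, the products $\psi\vee f|_\h$ (and the symmetrized sums in $\Sb_1$) land in the cross-terms $\a^*\vee\b^*$ and in $\b^*\vee\b^*$ of $S^2(\h^*)^H$, using the decomposition $(\h^*)^H=(\a^*)^H\oplus\b^*$ from Lemma \ref{lem-ker-im-nb2}(5) together with the refined orthogonal decomposition \eqref{h decomposition refined}. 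The crucial point is that these images are linearly independent from the $S^2((\h\cap[\g,\,\g])^*)^H$-part carried by $1\ox P_\g^3$, since $\b^*$ is supported on $\b\subset\z(\h)$ which is complementary to $\h\cap[\g,\,\g]$, whereas $\widetilde{\Bi}_i|_{\h\times\h}$ is supported on $\h\cap[\g,\,\g]$. Therefore the three summands do not interfere, and a vector in the restricted domain is annihilated by $\nabla^3$ precisely when its $(\a^*)^H\ox(\ker\Re)^\perp$ and $\Sb_1$ components vanish and its $1\ox P_\g^3$ component $\sum_i\lambda_i\rho(\widetilde{\Bi}_i)$ satisfies $\sum_i\lambda_i\widetilde{\Bi}_i|_{\h\cap[\g,\,\g]}=0$, i.e. lies in $\ker\Psi_{\g,\h}=\NC_{\g,\h}$.

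Concretely I would define the isomorphism $\NC_{\g,\h}\to \ker\nabla^3|_{\cdots}$ by sending $\sum_i\lambda_i\widetilde{\Bi}_i\in\NC_{\g,\h}\subset S^2([\g,\,\g]^*)^G$ to $\sum_i\lambda_i(1\ox\rho(\widetilde{\Bi}_i))$, using that $\{\rho(\widetilde{\Bi}_i)\}$ is a basis of $P_\g^3$ to make this well-defined, and then verify this lands in the kernel by the computation above. Injectivity is immediate from linear independence of the $\rho(\widetilde{\Bi}_i)$, and surjectivity follows from the interference-freeness argument showing that any kernel element has trivial components in the other two summands.

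The main obstacle I anticipate is the bookkeeping needed to verify that the images of $(\a^*)^H\ox(\ker\Re)^\perp$ and $\Sb_1$ under $\nabla^3$ are genuinely independent from the image of $1\ox P_\g^3$, and that within the symmetric square $S^2(\h^*)^H$ the relevant pieces $\a^*\vee\b^*$, $\b^*\vee\b^*$ and $S^2((\h\cap[\g,\,\g])^*)^H$ are in direct sum. This requires carefully invoking the orthogonal decompositions \eqref{h decomposition refined}–\eqref{h decomposition component2} and the identification $\Re\colon(\ker\Re)^\perp\to\b^*$ from Lemma \ref{lem-ker-im-nb2}(4), and keeping track of the $H$-invariance so that one stays inside the fixed-point subspaces throughout. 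Once this separation of supports is established, the reduction to $\ker\Psi_{\g,\h}$ is direct.
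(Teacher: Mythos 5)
Your overall strategy---compute $\ker\nb^3$ on the three listed summands and match it with $\ker\Psi_{\g,\h}$---is the same as the paper's, but your execution fails at exactly the step you flag as crucial. You claim that $\widetilde{\Bi}_i|_{\h\times\h}$ is supported on $\h\cap[\g,\,\g]$, so that the image of $1\ox P_\g^3$ under $\nb^3$ is independent of the images of $(\a^*)^H\ox(\ker\Re)^\perp$ and $\Sb_1$ (which lie in $(\a^H)^*\vee\b^*$ and $\b^*\vee\b^*$). This is false: $\b$ is orthogonal to $\h\cap[\g,\,\g]$ only with respect to the auxiliary $G$-invariant inner product $\langle\cdot,\cdot\rangle$ used to define it, not with respect to the forms $\widetilde{\Bi}_i$ of \eqref{definition-B-tilde}; an element of $\b\subset\z(\h)$ generally has a nonzero component in $[\g,\,\g]$, so $\widetilde{\Bi}_i|_{\b\times\b}$ and $\widetilde{\Bi}_i|_{\a^H\times\b}$ need not vanish. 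Concretely, take $G={\rm SU}(2)\times\SS^1$ and $H$ the circle $\{({\rm diag}(e^{i\theta},e^{-i\theta}),\,e^{i\theta})\}$, so $\h=\R\,(X,1)$ with $X={\rm diag}(i,-i)$. Then $\h\cap[\g,\,\g]=0$, hence $\a=0$, $\b=\h$, and $\Psi_{\g,\h}$ is the zero map, so $\NC_{\g,\h}=S^2([\g,\,\g]^*)^G=\R\,\widetilde{\Bi}_1$. Your proposed isomorphism sends $\widetilde{\Bi}_1$ to $1\ox\rho(\widetilde{\Bi}_1)$, but $\nb^3(1\ox\rho(\widetilde{\Bi}_1))=\widetilde{\Bi}_1|_{\h\times\h}\ox 1\neq 0$, since $\widetilde{\Bi}_1((X,1),(X,1))={\Bi}_1(X,X)\neq 0$. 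So your map does not land in the kernel, and your description of the kernel---elements whose $(\a^*)^H\ox(\ker\Re)^\perp$- and $\Sb_1$-components vanish---is incorrect.

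The correct picture, which is what the paper's proof establishes, is that the restricted kernel is a \emph{graph} over $\NC_{\g,\h}$, not a subspace of $1\ox P_\g^3$. Writing a general element with coefficients $\alpha_{ij},\beta_{lk},\eta_i$ as in \eqref{elt-rest-part}, the vanishing of the resulting symmetric form on $\a^H\times\b$ and $\b\times\b$ does not force $\alpha_{ij}=\beta_{lk}=0$; it determines these coefficients in terms of the $\eta_i$ via \eqref{value-beta-jj}--\eqref{value-alpha-ij}, e.g.\ $\alpha_{ij}=-\sum_l\eta_l\widetilde{\Bi}_l(A_i,D_j)$. In the example above the kernel is spanned by $\tfrac{c}{2}\,D_1^*\ox\lambda_1+1\ox\rho(\widetilde{\Bi}_1)$ with $c=-{\Bi}_1(X,X)>0$, which has a nonzero $\Sb_1$-part. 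The only condition on the $\eta_i$ alone comes from vanishing on $\a^H\times\a^H$ and $(H-1)\h\times(H-1)\h$, i.e.\ \eqref{restriction-killing-form-condition}, and it is Lemma \ref{lem-equivalence-G-inv-forms}---not a support argument---that converts this into $\sum_i\eta_i\widetilde{\Bi}_i\in\ker\Psi_{\g,\h}$. The isomorphism of the lemma is then the projection of this graph onto its $1\ox P_\g^3$-coordinate; the summands $\Sb_1$ and $(\a^*)^H\ox(\ker\Re)^\perp$ are present precisely to absorb the nonzero restrictions $\widetilde{\Bi}_i|_{\b\times\b}$ and $\widetilde{\Bi}_i|_{\a^H\times\b}$. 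Your claimed kernel happens to have the same dimension as the true one, so the statement itself is not contradicted, but the argument as written does not prove it.
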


\begin{proof} We begin by fixing some more notations which will be used in this proof 
as well as in the proof of Lemma \ref{ker-nb4-complemntry-part}.

We first use the convention mentioned at \eqref{convension1}  and  \eqref{h decomposition refined} to identify  the dual spaces  
$(\a^H)^* , \, ((H-1) \a)^*, \,  \b^*,  \, [\h,\, \h]^*$ as subspaces of $\h^*$. Under this identification, we have
\begin{align}
	\h^* \,=\, (\a^H)^* \oplus ((H-1) \a)^* \oplus \b^* \oplus [\h,\, \h]^*.\label{h* decomposition refined}
\end{align}

We next fix some more notations, which will
be used here and in the proof of Lemma 
\ref{ker-nb4-complemntry-part}.

\begin{enumerate}[label = {{\bf  D}.\arabic*}]
	
\item~ \label{convensio4}	
Let $r_{\a^H} \,:=\, \dim   \a^H, \, r_\b \,:=\, \dim   \b$.  We fix bases $\{A_1,\dots, A_{r_{\a^H}}\}$  and
$\{D_1,\,\dots,\, D_{r_{\b}}\}$  of $\a^H$ and $\b$, respectively. Let $ \{A^*_1,\,\dots, \,A^*_{r_{\a^H}} \} $ be a basis of $(\a^H)^*$
dual to the basis $\{A_1,\,\dots,\, A_{r_{\a^H}}\}$ and $ \{ D^*_1,\,\dots,\, D^*_{r_{\b}}  \} $ be a basis of $\b^*$ dual to the basis
$\{D_1,\,\dots,\, D_{r_{\b}}\}$. Thus $A^*_p, \,D^*_i \,\in\, \h^*$ for all $ 1\,\leq\, p \,\leq\, r_{\a^H},\, \,  1 \,\leq\, i 
\,\leq\, r_\b$. Moreover, $A^*_p (A_q) \,= \,\delta_{pq}, \, A^*_p ( (H-1) \a + \b + [\h,\, \h])
\,=\, 0$, for all $ 1\,\leq\, p,\,q \,\leq\, r_{\a^H}$ and 	$D^*_i ( D_j) \,=\, \delta_{ij}, \, D^*_i ( \a + [\h,\, \h]) \,=\,0$, for
all $ 1 \,\leq\, i, \,j \,\leq\, r_{\b}$. As $\b^H \,=\, \b$ it should be noted that elements of $(\a^H)^*$ and $\b^*$ remain
fixed under the action of $H$. Thus $H$ fixes 	$A^*_p, D^*_i \,\in\, \h^*$ for all $ 1\,\leq\, p \,\leq\, r_{\a^H}, \, \, 1 \,\leq\, i 
\,\leq\, r_\b$.
	
\item~\label{convension5}
Recall that the restriction map $f \,\longmapsto \,f|_\b$ from $( \ker {\Re})^{\perp} $ to $\b^*$ is a linear isomorphism of vector spaces. Using this map, we choose a basis 	
$\{ \lambda_1, \,\cdots,\, \lambda _{r_{\b}}  \} $ of $( \ker {\Re})^{\perp} $ such that $\lambda_i|_\b \,=\, D^*_i|_\b$ for all $1 \,\leq\, i \,\leq \,r_\b$. 	
Moreover as $\lambda_i \,\in\, P_\g^1$, it follows that $\lambda_i ( \a + [\h, \,\h]) \,=\, 0$ for  all $ 1 \,\leq\, i \,\leq\, r_\b$. Thus $\lambda_i|_\h \,=\,  D^*_i $ for all $1 \,\leq\, i \,\leq\, r_\b$.
\end{enumerate}

Now write any arbitrary element of  $(\a^*)^H\ox ({\rm ker}{\Re} )^\perp  \bigoplus  \Sb_1 \bigoplus 1\ox P_\g^3$ with respect to the bases described above and in Lemma \ref{lemma-Pg3-basis}, say,
$$
\sum_{1\le  i \le r_{\a^H} \atop 1\le j \le r_\b}\alpha_{ij} A^*_i\ox  \lambda_j \,+\,\sum_{ 1\le l\leq k\leq r_\b}\beta_{lk} (D^*_l\ox \lambda_k \,+\,D^*_k \ox \lambda_l  ) \,+\, \sum_{1\le i \le \#[\g,\,\g]} 1\ox \eta_i \rho(\widetilde{\Bi}_i)
.$$
 Then  apply $ \nb^3 $ on it :
\begin{equation}\label{elt-rest-part}
	\begin{split}
		\nb^3   &\,\Big(\sum_{ i,j}\alpha_{ij} A^*_i\ox  \lambda_j \,+\,\sum_{ l\leq k}\beta_{lk} (D^*_l\ox \lambda_k \,+\,D^*_k \ox \lambda_l  ) \,+\, \sum_i 1\ox \eta_i \rho(\widetilde{\Bi}_i)
		\Big)      \\
		&=\,	\big(  \sum_{ i,j}\alpha_{ij} A^*_i\vee D^*_j \,+\,\sum_{  l\leq k}2\beta_{lk} D^*_l \vee D^*_k\, +\, \sum_i \eta_i \widetilde{\Bi}_i|_{\h\times\h}  \big) \ox 1
	\end{split}
\end{equation}
Thus using \eqref{elt-rest-part} it is immediate that  the kernel of $\nb^3$  restricted to the subspace $ (\a^*)^H\ox ({\rm ker}{\Re} )^\perp  \bigoplus  \Sb_1 \bigoplus 1\ox P_\g^3$ 
consists of  elements $\sum_{ i,j}\alpha_{ij} A^*_i\ox  \lambda_j \,+\,\sum_{ l,k}\beta_{lk} (D^*_l\ox \lambda_k \,+\,D^*_k \ox \lambda_l  ) \,+\, \sum_i \eta_i \rho(\widetilde{\Bi}_i)$ such that the symmetric bilinear form 
\begin{align}
\sum_{ i,j}\alpha_{ij} A^*_i\vee D^*_j \,+\,\sum_{  l\leq k}2\beta_{lk} D^*_l \vee D^*_k\, +\, \sum_i \eta_i \widetilde{\Bi}_i|_{\h\times\h}\label{elt-rest-part2}
\end{align}
is identically zero on $\h \times \h$. 

Observe that the form appearing in \eqref{elt-rest-part2} is $H$-invariant. Thus by Proposition \ref{representation-proposition}, the 
subspaces $\h^H$ and $(H-1) \h$ are orthogonal with respect to this form. Furthermore, by Lemma \ref{lem-orthog-ideals}, as $(H-1) \h \,=\, 
(H-1) \z (\h) + [\h,\, \h]$, the spaces $(H-1) \z(\h) \,=\, (H-1) \a$ and $[\h,\, \h]$ are orthogonal with respect to the form in 
\eqref{elt-rest-part2}. Recall that $\h^H \,=\, \a^H + \b$. Thus it follows that the form as in \eqref{elt-rest-part2} is zero on $\h \times \h$ 
if and only if it is zero on each of the spaces $\a^H \times \a^H,\, \a^H \times \b,\, \b \times \b,\, (H-1) \h \times (H-1) \h$.

We next evaluate the form as in  \eqref{elt-rest-part2} at $ (D_j, \,D_j) $ and $(D_i,\,D_j)$ for $i\,\neq\, j$,  respectively:
\begin{align}
	&4\beta_{jj} \,+\,  \sum_l \eta_l \widetilde{\Bi}_l(D_j,\, D_j) \,=\,0,    \label{value-beta-jj}  \\
	&  2 \beta_{ij} \,+\, \sum_l \eta_l \widetilde{\Bi}_l(D_i,\, D_j)\,=\,0     \, , \quad {\rm for\,\,  } i\,\neq\, j \,.   \label{value-beta-ij}
\end{align}
Similarly, by  evaluating  the form as in  \eqref{elt-rest-part2} at $(A_i,\,D_j) $ we have :
\begin{align}\label{value-alpha-ij}
	\alpha_{ij}  \,+\, \sum_l \eta_l \widetilde{\Bi}_l(A_i, \,D_j )\,=\,0   \,.   
\end{align}
It is immediate that the form as in \eqref{elt-rest-part2} is zero on each of the spaces $a^H \times \b, \, \b \times \b$ if and only if 
\eqref{value-beta-jj}, \eqref{value-beta-ij} and \eqref{value-alpha-ij} hold. Moreover, the form as in \eqref{elt-rest-part2} is zero on each of the
spaces $\a^H \times \a^H, (H-1) \h \times (H-1) \h$ if and only if 
\begin{align}
\sum_i \eta_i \widetilde{\Bi}_i|_{ \big( \a^H + (H-1) \h \big) \times \big( \a^H + (H-1) \h \big)} = 0.\label{restriction-killing-form-condition}
\end{align}
In view of \eqref{value-beta-ij}, \eqref{value-beta-jj}, \eqref{value-alpha-ij}, \eqref{restriction-killing-form-condition} and Lemma 
\ref{lem-equivalence-G-inv-forms} we conclude that $ \NC_{\g,\h}$ and $ {\ker \nb^3}$ restricted to the subspace ${ (\a^*)^H\ox ({\rm ker}{\Re})^\perp \bigoplus \Sb_1 \bigoplus 1\ox P_\g^3 }$ are isomorphic.
\end{proof}

\medskip
{\bf Proof of Theorem \ref{thm-3rd-coh-general}.}
Recall the Koszul complex $({\CS}^*:=S(\h^*)^H\ox\wedge P_\g,\, \nabla )$ as defined in Section \ref{sec-Koszul-complex}. As $G$ is compact connected, the cohomology of $G/H$ is given by the cohomology of the Koszul complex  $(S(\h^*)^H\ox\wedge P_\g,\, \nabla )$; see    Theorem \ref{Cartan-thm-disconnected}. Therefore, 
\begin{align}\label{h3}
H^3(G/H)\,\simeq\, \frac{{\rm ker}\, \nabla^3 }{{\rm Im}\, \nabla^2 }\,,
\end{align}
where the differentials $\nabla^3$ and $\nabla^2$ are defined in \eqref{def-nabla3} and \eqref{def-nabla2}, respectively. The numerator and the denominator in \eqref{h3} will be identified.   Recall that 
$$
{\CS}^3\, = \, (\h^*)^H\ox P_\g^1 \ \bigoplus\  1\ox P_\g^3  \, \bigoplus\ 1\ox\wedge^3 P_\g^1\,,
$$
and  $\Im \nb^2 \,\subset\, (\h^*)^H\ox P_\g^1 $. In fact, ${\Im \nb^2} \bigoplus (\a^*)^H\ox  P_\g^1 \bigoplus \Sb_1 =   (\h^*)^H \ox P_\g^1\,$; see Lemma \ref{lem-ker-im-nb2}(5). Also recall that $\nb^2({(\a^*)^H\ox  \ker {\Re}}  ) =0 $.
Now 
\begin{align*}\label{comp}
\frac{{\rm ker}\, \nabla^3 }{{\rm Im}\, \nabla^2 }\,&\simeq \, \frac{{\ker} \nb^3|_{\big(   (\h^*)^H\ox P_\g^1 \big)  \bigoplus 1\ox P_\g^3   }}{(\Im \nb^2)}\, \bigoplus\, {\rm ker}\,\nabla^3|_{1\ox \wedge^3 P_\g^1} \\
&\, \simeq \,  {\ker}\nb^3|_{{(\a^*)^H\ox  \ker {\Re}}\bigoplus (\a^*)^H\ox ({\rm ker}{\Re} )^\perp  \bigoplus  \Sb_1 \bigoplus 1\ox P_\g^3  }  \,\bigoplus\, 1\ox \wedge^3\, {\rm ker}{{\Re}}\\		
&\, \simeq \,   {(\a^*)^H\ox  \ker {\Re}}  \bigoplus   {\ker}\nb^3|_{   (\a^*)^H\ox ({\rm ker}{\Re} )^\perp  \bigoplus  \Sb_1 \bigoplus 1\ox P_\g^3  }  \,\bigoplus\, 1\ox \wedge^3\, {\rm ker}{{\Re}}\\
		&\, \simeq \, \big((\z(\h)\cap [\h,\h])^*\big)^H\ox \Big(\frac{\g}{[\g,\,\g]+\h}\Big)^* \   \bigoplus  \NC_{\g,\h}    \,\bigoplus\, \wedge^3 \Big(\frac{\g}{[\g,\,\g]+\h}\Big)^*\,.
	\end{align*}
The last isomorphism follows from \eqref{ker-nbl},     Lemma \ref{lem-ker-nb3-isom-NgH},   and Lemma   \ref{lem-ker-im-nb2}(1).

If furthermore,  $\z(\h) \cap [\g,\,\g])=0$ then 
$H^3(G/H)\simeq   \NC_{\g,\h}\, \bigoplus\, \wedge^3 {\Big(\frac{\g}{[\g,\,\g]+\h}\Big)}^*\simeq H^3(G/H^0) $\,. Thus $(1)$ follows.  The statement $(2)$ follows from the additional fact that  $\g = [\g,\g]+\h$.
\qed

\subsection{Description of fourth cohomology}
Here we will prove Theorem \ref{thm-4th-coh-general}.  We follow the strategy as mentioned in Section \ref{strategy}.

We begin by  decomposing $(\h^*)^H\ox( \wedge^2 P_\g^1)$ into various subspaces using \eqref{h decomposition component2} and \eqref{decomposition-Pg1-ker-Rgh-ker-perp}  as follows :
\begin{equation}
 \begin{split}\label{decomp-h-ox-wd2Pg}
(\h^*)^H & \ox \wedge^2 P_\g^1\,\,    = \,\,    
(\a^*)^H\ox   \big(  \wedge ^2 P_\g^1 \big)    \bigoplus\, \b^*\ox \big(  \wedge ^2 P_\g^1 \big)   \\
  \,=&\,  (\a^*)^H\ox \wedge^2 {\ker} {\Re}\, \bigoplus\, (\a^*)^H\ox \big(({\ker} {\Re})^\perp  \wedge {\ker} {\Re} \big) \, \bigoplus\, (\a^*)^H\ox \wedge^2 ({\ker} {\Re})^\perp\, \\
& \bigoplus  \,	\b^*\ox \wedge^2 {\ker} {\Re} \,\bigoplus \,\b^*\ox \big(({\ker} {\Re})^\perp  \wedge {\ker} {\Re}  \big)\, \bigoplus\, \b^*\ox \wedge^2 ({\ker} {\Re})^\perp \,. 
 \end{split}
\end{equation}
We will next define two subspaces $\mathbf{A_2} $ and $\mathbf{S_2} $ of $\b^*\ox  \big( {\ker {\Re}} \wedge ({\ker {\Re}})^\perp\big) $ as follows :
\begin{align}
\mathbf{A_2}&:= \text{Span} \{ f_1|_\b \ox (g\wedge f_2 )-f_2|_\b \ox (g\wedge f_1 )\,\mid\, f_1,\,f_2\,\in\, ({\ker {\Re}})^\perp, g\in {\ker}\Re\}, \label{def-A2}     \\
	\mathbf{S_2}&:= \text{Span}\{ f_1|_\b \ox ( g\wedge f_2 )  + f_2|_\b \ox  ( g\wedge f_1 )\, \mid\,  f_1,f_2\in ({\ker {\Re}})^\perp, g\in {\ker}\Re\}. \label{def-S2} 
\end{align}
Similarly, we  define another   two subspaces $\mathbf{A_3} $ and $ \mathbf{S_3} $ of 
$  \b^*\ox  \big( ({\ker \Re})^\perp \wedge ({\ker {\Re}})^\perp \big) $ as follows:
\begin{align}
\mathbf{A_3}&:= \text{Span}\Bigg\{\begin{array}{cc}
f_1|_\b \ox ( f_2\wedge f_3)-f_2|_\b \ox(f_1\wedge f_3) \\
\qquad  + \,f_3|_\b\ox (f_1\wedge f_2)
\end{array} 
\biggm|   f_1, f_2,f_3\in ({\ker {\Re}})^\perp \Bigg\},  \label{def-A3} \\	 
\mathbf{S_3}&:= \text{Span} \Bigg\{\begin{array}{cc}
	-2f_1|_\b \ox ( f_2\wedge f_3) -  f_2|_\b \ox(f_1\wedge f_3) \\
	\qquad  +\, f_3|_\b\ox (f_1\wedge f_2)
\end{array} 
\biggm|   f_1, f_2,f_3\in ({\ker {\Re}})^\perp \Bigg\}\,.\label{def-S3} 
\end{align}

\begin{lemma}\label{lem-im-nb3-complementary-part}
Let $\Ab_2, \,\Sb_2,\, \Ab_3$ and $\Sb_3$ be defined as above. Then the following statements hold: 
\begin{enumerate}
\item $\nb^3\big( 1\ox ({\ker } {\Re} \wedge  {\ker } {\Re}\wedge  ({\ker } {\Re})^{\perp})\,  \big)  \,=\, \b^*\ox( \wedge^2 {\ker} {\Re}) $.\vspace*{2mm}
	
\item $	\nb^3\big( 1\ox ({\ker } {\Re} \wedge  ({\ker } {\Re})^{\perp}\wedge  ({\ker } {\Re})^{\perp}) \big)  \, =\, \Ab_2 $  and   \vspace*{1mm}  \\
$\nb^3\big( 1\ox (({\ker } {\Re})^{\perp} \wedge  ({\ker } {\Re})^{\perp}\wedge  ({\ker } {\Re})^{\perp}) \big) \,=\, \Ab_3 $. \vspace*{2mm}
\item $ \nb^3( 1\ox \wedge ^3 P^1_\g)\,=\, \, \b^*\ox \wedge^2 {\ker} {\Re} \, \bigoplus\,  \mathbf{A_2} \, \bigoplus\, \mathbf{A_3}$. \vspace*{2mm}
\item  $ \Ab_2\oplus\Sb_2 \,=\, \b^*\ox   ( {\ker} {\Re} \wedge ({\ker } {\Re})^{\perp})$.   	\vspace*{2mm}
\item  $ \Ab_3\oplus\Sb_3\,=\, \b^*\ox   (( {\ker} {\Re} )^{\perp}\wedge ({\ker } {\Re})^{\perp})$.   	\vspace*{2mm}
\item  $ \nb^3( 1\ox \wedge ^3 P^1_\g)  \, \bigoplus\,  \mathbf{S_2} \, \bigoplus\, \mathbf{S_3}    \, \bigoplus\, (\a^*)^H\ox \big(  \wedge ^2 P_\g^1 \big)   \,=\,   (\h^*)^H\ox \big(\wedge ^2 P_\g^1 \big)$.
\end{enumerate}
\end{lemma}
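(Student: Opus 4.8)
The plan is to reduce everything to the $H$-invariant orthogonal splitting $P_\g^1 = \ker \Re \oplus (\ker \Re)^\perp$ of \eqref{decomposition-Pg1-ker-Rgh-ker-perp} together with the abstract multilinear identities \eqref{decomposition-vv20-wtld} and \eqref{decomposition-vvv30-wtld}. The key elementary observation on which the whole computation rests is that every $f \in P_\g^1$ vanishes on $[\g,\,\g] \supseteq [\h,\,\h]$ and on $\a = \z(\h) \cap [\g,\,\g] \subset [\g,\,\g]$, so under the identification $\b^* \subset \h^*$ of convention \ref{convension1} one has $f|_\h = f|_\b$; in particular $f|_\h = 0$ when $f \in \ker \Re$ and $f|_\h = f|_\b \neq 0$ when $f \in (\ker \Re)^\perp$.

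First I would prove (1), (2), (3) together. Decompose $\wedge^3 P_\g^1$ into its four graded pieces $\wedge^3 \ker \Re$, $(\wedge^2 \ker \Re) \wedge (\ker \Re)^\perp$, $\ker \Re \wedge \wedge^2 (\ker \Re)^\perp$, and $\wedge^3 (\ker \Re)^\perp$, and evaluate $\nb^3$ from \eqref{def-nabla3} on each using the observation above. The first piece maps to $0$; the second produces exactly $\b^* \ox \wedge^2 \ker \Re$, which is (1); substituting $f|_\h = f|_\b$ into the third and fourth pieces reproduces verbatim the spanning sets of $\Ab_2$ and $\Ab_3$ from \eqref{def-A2} and \eqref{def-A3}, which is (2). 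Since these three nonzero images lie respectively in $\b^* \ox \wedge^2 \ker \Re$, in $\b^* \ox (\ker \Re \wedge (\ker \Re)^\perp)$, and in $\b^* \ox \wedge^2 (\ker \Re)^\perp$, distinct graded summands of $\b^* \ox \wedge^2 P_\g^1$, their sum is automatically direct, giving (3).

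For (4) and (5) I would invoke the decompositions \eqref{decomposition-vv20-wtld} and \eqref{decomposition-vvv30-wtld} with the choice $V := (\ker \Re)^\perp$, $W := \ker \Re$, $V' := \b^*$, and $\lambda := \Re|_{(\ker \Re)^\perp}$, the linear isomorphism onto $\b^*$ used in Lemma \ref{lem-ker-im-nb2}(4). With this choice $\lambda(v) = v|_\b$, so the spanning sets of $\widetilde{E}_2, \widetilde{E}_0$ coincide exactly with those of $\Ab_2, \Sb_2$ in \eqref{def-A2}, \eqref{def-S2}, and those of $\widetilde{E}'_3, \widetilde{E}'_0$ coincide with $\Ab_3, \Sb_3$ in \eqref{def-A3}, \eqref{def-S3}. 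Hence \eqref{decomposition-vv20-wtld} yields $\b^* \ox (\ker \Re \wedge (\ker \Re)^\perp) = \Ab_2 \oplus \Sb_2$ and \eqref{decomposition-vvv30-wtld} yields $\b^* \ox \wedge^2 (\ker \Re)^\perp = \Ab_3 \oplus \Sb_3$, which are precisely (4) and (5).

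Finally, for (6) I would assemble the pieces. Using $(\h^*)^H = (\a^*)^H \oplus \b^*$ and the grading $\wedge^2 P_\g^1 = \wedge^2 \ker \Re \oplus (\ker \Re \wedge (\ker \Re)^\perp) \oplus \wedge^2 (\ker \Re)^\perp$ from \eqref{decomp-h-ox-wd2Pg}, I combine (3), (4), (5) to obtain $\nb^3(1 \ox \wedge^3 P_\g^1) \oplus \Sb_2 \oplus \Sb_3 = \b^* \ox \wedge^2 P_\g^1$, and then adjoin the complementary slab $(\a^*)^H \ox \wedge^2 P_\g^1$ to recover all of $(\h^*)^H \ox \wedge^2 P_\g^1$. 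The step I expect to require the most care is the directness bookkeeping: one must verify that each constructed subspace sits in a single graded summand of $\b^* \ox \wedge^2 P_\g^1$ so that every $\oplus$ is a genuine internal direct sum, and that the matching of $\Ab_i, \Sb_i$ with the $\widetilde{E}$'s respects signs exactly. Once the identification $\lambda = \Re$ is fixed, every remaining assertion is a routine consequence of the decompositions already established.
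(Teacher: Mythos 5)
Your proposal is correct and follows essentially the same route as the paper's (much terser) proof: direct evaluation of $\nb^3$ on the graded pieces of $\wedge^3 P_\g^1$ using the isomorphism $\Re\colon(\ker\Re)^\perp\to\b^*$ for (1)--(3), the decompositions \eqref{decomposition-vv20-wtld} and \eqref{decomposition-vvv30-wtld} with $\lambda=\Re|_{(\ker\Re)^\perp}$ for (4)--(5), and assembly via \eqref{decomp-h-ox-wd2Pg} for (6). Your added bookkeeping (each image sitting in a distinct graded summand of $\b^*\ox\wedge^2 P_\g^1$, and the sign-insensitive matching of spanning sets) simply makes explicit what the paper leaves to the reader.
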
     
\begin{proof}
Proof of (1) follows from the fact that $ {\Re}\colon ({\ker }{\Re}) ^\perp \,\lto \,\b^*$ is a linear  isomorphism. The statement in (2) is immediate from the definition of the map $ \nb^3 $. Now (3) follows from (1) and (2).

To proof of (4) and (5),  
define an operator $T$ on the vector space $  \ker \Re^\perp \ox \big(( \ker \Re^\perp \oplus  \ker \Re)\wedge ( \ker \Re^\perp \oplus  \ker \Re)\big)$ as follows :
\begin{align*}\label{mapp-for-E0E2}
	T\,\colon\,  \ker \Re^\perp \ox \big(( \ker \Re^\perp \oplus  \ker \Re)& \wedge   ( \ker \Re^\perp \oplus  \ker \Re)\big) \, \lto \,  \\
	& \ker \Re^\perp \ox \big(( \ker \Re^\perp \oplus  \ker \Re)\wedge ( \ker \Re^\perp \oplus  \ker \Re)\big) \\
	v\ox \big((v_1+w_1)\wedge (v_2+w_2)\big)\, \,\longmapsto\,\,  & v\ox \big((v_1+w_1)\wedge (v_2+w_2)\big)- v_1\ox \big(v\wedge (v_2+w_2)\big) \nonumber \\
	&\qquad  + v_2\ox \big(v\wedge (v_1+w_1)\big)\,\nonumber .
\end{align*}
Note that $ \ker \Re^\perp \ox \big(  \ker \Re\wedge  \ker \Re^\perp \big) $ and  $ \ker \Re^\perp \ox \big(  \ker \Re^\perp \wedge  \ker \Re^\perp \big) $  are $ T $-invariant. Let $ T_1\,:=\, T|_{ \ker \Re^\perp \ox (  \ker \Re\wedge  \ker \Re^\perp )} $, and $ T_2\,:=\, T|_{ \ker \Re^\perp \ox (  \ker \Re^\perp \wedge  \ker \Re^\perp )} $. Now $T_1^2 - 2T_1=0$ and $T_2^2- 3T_2=0$.   
Let $E_\lambda(T_i)$ be the eigenspace of $E_i$ for the eigenvalue $\lambda$.
Then  we have the following direct sum decomposition: 
\begin{equation}\label{decomp-vv2030}
	\begin{split}
& \ker \Re^\perp \ox (  \ker \Re\wedge  \ker \Re^\perp ) \,=\, E_2(T_1)\,\oplus\,E_0(T_1)\\
 &  \ker \Re^\perp \ox ( \ker \Re^\perp \wedge  \ker \Re^\perp )\,=\, E_3(T_2)\oplus E_0(T_2)\,.
 \end{split}
\end{equation}
Now the proof of (4) and (5) follows form a minor variant of \eqref{decomp-vv2030}.  Finally, (6) follows from (3), (4), (5) and \eqref{decomp-h-ox-wd2Pg}.
\end{proof}

The differential $ \nb^4 $ is given as below:  
\begin{align*}
	\nb^4\,\colon\, \big(S^2(\h^*)^H\ox 1  \,\bigoplus\,& (\h^*)^H\ox \wedge^2P_\g^1\, \bigoplus\, 1\ox (P_\g^3\wedge P_\g^1)\, \bigoplus\, 1 \ox \wedge^4 P_\g^1  \big) \,  \vspace{0.2 cm}\\
	& \lto\,\, S^2(\h^*)^H\ox P_\g^1\, \,\bigoplus\, \,(\h^*)^H\ox P_\g^3\, \,\bigoplus\, \, (\h^*)^H\ox \wedge^3P_\g^1 	
\end{align*} 
\begin{equation}\label{def-nabla4}
	\begin{split}
		\nabla^4( S^2(\h^*)^H\ox 1)  \,=\,&  \,0 \,,\\
		\nabla^4(\psi\ox f_1\wedge f_2)  \,=\,& \, \psi \vee f_1|_{\h} \ox f_2 - \psi \vee f_2|_{\h} \ox f_1 \,,\qquad  f_1,
		\,f_2 \,\in \,P_\g^1\,,\\
		\nabla^4(1\ox \rho(\widetilde {\Bi}_i)\wedge f)  \,=\,& \, \widetilde {\Bi}_i|_{\h\times \h} \ox f  \, -  \, f|_\h\ox \rho(\widetilde {\Bi}_i)\,,\\
		\nabla^4(1\ox f_1\wedge f_2\wedge f_3\wedge f_4 ) \,=\,&\,   f_1|_{\h} \ox f_2\wedge f_3\wedge f_4 -f_2|_\h\ox f_1 \wedge f_3\wedge f_4  \\
		+\,&f_3|_{\h} \ox f_1\wedge f_2\wedge f_4-f_4|_{\h} \ox f_1\wedge f_2\wedge f_3     ,\quad f_i\,\in\, P_\g^1.
	\end{split}
\end{equation}

Recall that in view of Lemma \ref{lem-im-nb3-complementary-part}(6), the following isomorphism in \eqref{ker-nb4-middle-part} holds.  Since $ \nb^4\big((\a^*)^H\ox   ({\ker}   {{\Re}}  \wedge {\ker}   {{\Re}}) \big)  = 0$,
the isomorphism in \eqref{ker-nb4-middle-part-refine} follows from the direct sum decomposition  in \eqref{decomp-h-ox-wd2Pg}.
\begin{align}
\frac{{\ker}\nb^4|_{(\h^*)^H\ox \big(  \wedge ^2 P_\g^1 \big) \bigoplus 1\ox (P_\g^3\wedge P_\g^1 )    } }{\nb^3( 1\ox \wedge ^3 P^1_\g) }
&\,\,\simeq\, {\ker}\nb^4|_{ \mathbf{S_2} \, \bigoplus\, \mathbf{S_3}    \, \bigoplus\, (\a^*)^H\ox \big(  \wedge ^2 P_\g^1 \big)   \bigoplus 1\ox (P_\g^3\wedge P_\g^1 )    }    \label{ker-nb4-middle-part}\\
&\,\,\simeq (\a^*)^H\ox({\ker}   {{\Re}}  \wedge {\ker}   {{\Re}}) \, \, \bigoplus   \label{ker-nb4-middle-part-refine} \\
   {\ker \nb^4}|&_{(\a^*)^H\ox  (({\ker}   {{\Re}})^\perp  \wedge {\ker}   {{\Re}} )  \bigoplus  (\a^*)^H\ox \wedge^2 ({\ker}   {{\Re}})^\perp     \bigoplus   \Sb_2 \bigoplus 
	\Sb_3 \bigoplus 1\ox(P_\g^3\wedge P_\g^1)  }.\nonumber
\end{align}
 
For  a vector space $ V $  and  a positive integer $ k $,
$V^k \,:=\,\underset{k\text{-many}}{\underbrace{V \oplus \cdots \oplus V}}\, .$

\begin{lemma}\label{ker-nb4-complemntry-part}
The following isomorphisms hold:	
	\begin{align}
		{\ker }\nb^4 &  |_{   (\a^*)^H\ox   \big(({\ker}   {{\Re}})^\perp  \wedge {\ker}   {{\Re}} \big)  \bigoplus  (\a^*)^H\ox \wedge^2 ({\ker}   {{\Re}})^\perp \bigoplus   \Sb_2 \bigoplus 
			\Sb_3  \bigoplus     1\ox  (P_\g^3 \wedge P_\g^1)  } \nonumber \\
		\simeq& \,  \,\,{\ker }\nb^4|_{(\a^*)^H\ox   \big(({\ker}   {{\Re}})^\perp  \wedge {\ker}   {{\Re}} \big)  \bigoplus   \Sb_2 \bigoplus    1\ox  (P_\g^3 \wedge {\ker }  {{\Re}})} \nonumber  \\
		& \quad  \bigoplus\, {\ker }\nb^4   |_{    (\a^*)^H\ox \wedge^2 ({\ker}   {{\Re}})^\perp \bigoplus  \Sb_3  \bigoplus     1\ox  (P_\g^3 \wedge ({\ker}   {{\Re}})^\perp  )}  \label{1st-isom-lem-ker0}   \\
	\simeq & \,\,  \, ( \NC_{\g,\h})^{\dim   {\ker   {{\Re}}}}   \label{2nd-isom-lem-ker0}  \,.
	\end{align}
\end{lemma}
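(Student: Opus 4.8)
The plan is to establish the two displayed isomorphisms in turn, in both cases by tracking how $\nb^4$, given by \eqref{def-nabla4}, acts on the listed summands. The one recurring principle is that for $g\in\ker\Re$ one has $g|_\h=0$, whereas for $f\in(\ker\Re)^\perp$ the restriction $f|_\h=\Re(f)$ is a nonzero element of $\b^*\subset(\h^*)^H$, since $\Re\colon(\ker\Re)^\perp\to\b^*$ is an isomorphism. The delicate point throughout is the bookkeeping that keeps the $\ker\Re$- and $(\ker\Re)^\perp$-directions apart.

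To obtain \eqref{1st-isom-lem-ker0} I would first check, using \eqref{def-nabla4} and the splitting $P_\g^1=\ker\Re\oplus(\ker\Re)^\perp$ from \eqref{decomposition-Pg1-ker-Rgh-ker-perp}, that the two groups of summands are carried into complementary parts of $\CS^5$. The summands $(\a^*)^H\ox(({\ker}\Re)^\perp\wedge\ker\Re)$, $\Sb_2$ and $1\ox(P_\g^3\wedge\ker\Re)$ each carry one distinguished factor lying in $\ker\Re$; because that factor restricts to $0$ on $\h$ it is left untouched by $\nb^4$, so these summands land in $S^2(\h^*)^H\ox\ker\Re$. The remaining summands $(\a^*)^H\ox\wedge^2(\ker\Re)^\perp$, $\Sb_3$ and $1\ox(P_\g^3\wedge(\ker\Re)^\perp)$ land in $S^2(\h^*)^H\ox(\ker\Re)^\perp\bigoplus(\h^*)^H\ox P_\g^3$. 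These two target subspaces meet only in $0$, so by the elementary fact that for a linear map on $D_1\oplus D_2$ whose images $\nb^4(D_1)$ and $\nb^4(D_2)$ lie in independent subspaces the kernel splits as $\ker|_{D_1}\oplus\ker|_{D_2}$, the isomorphism \eqref{1st-isom-lem-ker0} follows.

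For \eqref{2nd-isom-lem-ker0} I would treat the two kernels on the right of \eqref{1st-isom-lem-ker0} separately. For the first (the ``$\ker\Re$'' block), fix a basis $\{g_1,\dots,g_{r_0}\}$ of $\ker\Re$ with $r_0=\dim_\R\ker\Re$. Each generator carries exactly one factor $g_m$, and by the computation above its $\nb^4$-image lands in $S^2(\h^*)^H\ox g_m$; hence this kernel decomposes as $\bigoplus_m\ker\nb^4|_{(\text{block }m)}$ into $r_0$ independent pieces. In the bases of \ref{convensio4} and \ref{convension5} the $m$-th block sends $A^*_p\ox(\lambda_j\wedge g_m)\mapsto A^*_p\vee D^*_j\ox g_m$, the $\Sb_2$-generator to $-2D^*_l\vee D^*_k\ox g_m$, and $1\ox\rho(\widetilde{\Bi}_i)\wedge g_m\mapsto\widetilde{\Bi}_i|_{\h\times\h}\ox g_m$. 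Apart from the spectator factor $g_m$ and an irrelevant sign, these are exactly the maps analysed in the proof of Lemma \ref{lem-ker-nb3-isom-NgH}, so each block is isomorphic to $\NC_{\g,\h}$, yielding $(\NC_{\g,\h})^{r_0}$.

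The main obstacle is to show the second kernel in \eqref{1st-isom-lem-ker0} vanishes. Writing a general element as $x_1+x_2+x_3$ with $x_1\in(\a^*)^H\ox\wedge^2(\ker\Re)^\perp$, $x_2\in\Sb_3$ and $x_3\in1\ox(P_\g^3\wedge(\ker\Re)^\perp)$, only $x_3$ contributes to the $(\h^*)^H\ox P_\g^3$ part of $\nb^4$, through the terms $-\,f|_\h\ox\rho(\widetilde{\Bi}_i)$; linear independence of the $D^*_k=\lambda_k|_\h$ and of the $\rho(\widetilde{\Bi}_i)$ then forces $x_3=0$. Next, only $x_1$ contributes to the $((\a^*)^H\vee\b^*)\ox(\ker\Re)^\perp$ part, and comparing coefficients of the independent vectors $A^*_p\vee D^*_i$ forces $x_1=0$. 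Finally $x_2$ must lie in $\ker\nb^4\cap\Sb_3$ inside $S^2(\b^*)\ox(\ker\Re)^\perp$; here I would transport the map by $\Re$ to the natural $\mathrm{GL}(\b^*)$-equivariant map $\b^*\ox\wedge^2\b^*\lto S^2(\b^*)\ox\b^*$, note that \eqref{decomposition-vvv30-wtld} (cf. Lemma \ref{lem-im-nb3-complementary-part}(5)) identifies $\Ab_3$ and $\Sb_3$ with its two natural summands, that $\Ab_3=\nb^3(1\ox\wedge^3(\ker\Re)^\perp)\subset\Im\nb^3\subset\ker\nb^4$ kills the $\wedge^3$ summand, and that $\Sb_3$ is irreducible as a $\mathrm{GL}(\b^*)$-module, so the equivariant map is either zero or injective on it. A single nonzero value, e.g. $\nb^4(\Re(u)\ox(u\wedge u'))=\Re(u)\vee\Re(u)\ox u'-\Re(u)\vee\Re(u')\ox u\neq0$ for independent $u,u'\in(\ker\Re)^\perp$ (one checks $\Re(u)\ox(u\wedge u')\in\Sb_3$), shows it is injective, whence $x_2=0$. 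This establishes $(\NC_{\g,\h})^{r_0}$ as claimed.
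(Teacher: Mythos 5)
Your proof is correct, and its overall skeleton is the paper's: you obtain \eqref{1st-isom-lem-ker0} exactly as the paper does, by noting that the two blocks have $\nb^4$-images inside the independent subspaces $S^2(\h^*)^H\ox\ker\Re$ and $S^2(\h^*)^H\ox(\ker\Re)^\perp\bigoplus(\h^*)^H\ox P_\g^3$ of $\CS^5$, and you identify the $\ker\Re$-block with $(\NC_{\g,\h})^{\dim_\R \ker\Re}$ the same way as the paper's Step 2, namely by splitting along a basis of $\ker\Re$ and matching each block against the kernel condition \eqref{elt-rest-part} of Lemma \ref{lem-ker-nb3-isom-NgH}. The genuine divergence is in the vanishing of the second kernel (the paper's Step 1). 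There the paper works in coordinates: after killing the $P_\g^3\wedge(\ker\Re)^\perp$ coefficients just as you do, it eliminates the coefficients coming from $(\a^*)^H\ox\wedge^2(\ker\Re)^\perp$ and from $\Sb_3$ simultaneously by a descending induction on the last $\lambda$-index, an induction that is really needed in that setup because distinct $\Sb_3$-generators share target basis vectors (for instance $(D^*_2\vee D^*_3)\ox\lambda_1$ is hit by the generators indexed $(3,1,2)$ and $(2,1,3)$). You avoid the induction by two observations: first, once $x_3=0$, the images of $x_1$ and $x_2$ lie in the independent pieces $\big((\a^*)^H\vee\b^*\big)\ox(\ker\Re)^\perp$ and $S^2(\b^*)\ox(\ker\Re)^\perp$, so the problem decouples and $x_1=0$ follows from a disjoint-support coefficient comparison; second, for $x_2$ you transport everything by $\Re$ and argue that $\Sb_3\simeq S_{(2,1)}(\b^*)$ is an irreducible $\mathrm{GL}(\b^*)$-module on which the equivariant map induced by $\nb^4$ must be zero or injective, and your sample element $\Re(u)\ox(u\wedge u')$ (which indeed lies in $\Sb_3$: take $f_1=f_2=u$, $f_3=u'$ in \eqref{def-S3}) has nonzero image. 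This buys a shorter, index-free treatment of the hardest step; the cost is the appeal to irreducibility of the Schur module over $\R$ (which needs complexification together with Zariski density of $\mathrm{GL}_n(\R)$ in $\mathrm{GL}_n(\C)$, or an equivalent argument), whereas the paper's induction, though heavier on bookkeeping, is entirely elementary.
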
                 

\begin{proof}
Observe that 
\begin{align*}
(\a^*)^H &\ox   \big(({\ker}   {{\Re}})^\perp  \wedge {\ker}   {{\Re}} \big)  \bigoplus  (\a^*)^H\ox \wedge^2 ({\ker}   {{\Re}})^\perp \bigoplus   \Sb_2 \bigoplus \Sb_3  \bigoplus     1\ox  (P_\g^3 \wedge P_\g^1)    \\  
\,=&\,\Big( (\a^*)^H\ox   \big(({\ker}   {{\Re}})^\perp  \wedge {\ker}   {{\Re}} \big)  \bigoplus   \Sb_2 \bigoplus    1\ox  (P_\g^3 \wedge {\ker }  {{\Re}})\,\Big)\\
& ~~ \, \bigoplus\, \Big ((\a^*)^H\ox \wedge^2 ({\ker}   {{\Re}})^\perp \bigoplus  \Sb_3  \bigoplus     1\ox  (P_\g^3 \wedge ({\ker}   {{\Re}})^\perp )\Big)\,.
\end{align*}
In view of \eqref{def-nabla4}, it follows that
   \begin{align} 
   	\nb^4\Big(& (\a^*)^H \ox \big({\ker}   {{\Re}}\wedge ({\ker}{{\Re}})^\perp \big) \,\bigoplus\, \mathbf{S_2}  \,\bigoplus\,  1\ox \big(P_\g^3 \wedge {\ker}   {{\Re}}\big)\,     \Big)\nonumber\\
   	& \qquad \subseteq \, S^2(\h^*)^H\ox {\ker}  {{\Re}} \,, \  {\rm and}     \label{inclusion-I}\\
	\nb^4\Big(  &  (\a^*)^H\ox \big(({\ker}   {{\Re}})^\perp  \wedge ({\ker}   {{\Re}})^\perp \big) \,\bigoplus\, \mathbf{S_3}  \,\bigoplus\,  1\ox \big(P_\g^3 \wedge ({\ker}   {{\Re}})^\perp \big)\,     \Big)\,\nonumber\\
	& \qquad  \subseteq \, S^2(\h^*)^H\ox ({\ker}  {{\Re}} )^\perp  \,\bigoplus \, \b^*\ox  P_\g^3\,.   \label{inclusion-II}
\end{align}
Since the right hand sides of \eqref{inclusion-I} and \eqref{inclusion-II} are distinct direct summands of $S^2(\h^*)^H\ox P_\g^1\, \bigoplus\, 
\b^*\ox P_\g^3$, \eqref{1st-isom-lem-ker0} holds.

Let  $r_0 \,:=\,  \dim   {\ker} {\Re}\,=\,\dim   \frac{\g}{[\g,\,\g]+\h} $.  Let $ \{ \nu_1,\,\cdots,\, \nu_{r_0} \} $ be  a basis of $\ker {\Re} $.
We now adhere to the notations used in \eqref{convensio4} and \eqref{convension5}. Let  $\Ab_2,\, \Sb_2, \,\Ab_3$ and $\Sb_3$ be defined as in \eqref{def-A2}, \eqref{def-S2},\eqref{def-A3} and \eqref{def-S3}, respectively.
Then observe that 
$ \{ D^*_i \ox (  \nu_k\wedge \lambda_l )   \,-\, D^*_l \ox  ( \nu_k\wedge \lambda_i )\, \mid\,    1\leq i< l \leq  r_{\b}, \, 1\leq  k \leq r_0 \}$ and $\{D^*_i \ox  ( \nu_k\wedge  \lambda_l )\,+\,  D^*_l \ox  ( \nu_k\wedge \lambda_i ) \,\mid \,  1\leq i \leq l \leq  r_{\b}, \, 1\leq   k \leq r_0 \}$ form a basis of $\mathbf{A_2}$ and $\mathbf{S_2}$, respectively.  Similarly,  
$\{ D^*_i \ox  ( \lambda_k\wedge \lambda_l )  -D^*_k \ox  ( \lambda_i\wedge  \lambda_l ) +  D^*_l \ox  ( \nu_k\wedge \lambda_i ) \,\mid\,    1\,\leq\, i  \,<\, k\,<\,l \,\leq
\, r_{\b} \}$, 
$\, \{ -2D^*_i \ox ( \lambda_k\wedge \lambda_l )  -D^*_k \ox     (\lambda_i\wedge \lambda_l )  + D^*_l \ox (  \lambda_i\wedge \lambda_k )\,\mid\, 1\,\leq k \,\leq\, i\, \leq\,  r_{\b},\, k\,<\,l\leq r_{\b} \}$ 
form a basis of $\mathbf{A_3},\, \mathbf{S_3}$, respectively.

We divide the rest of  the proof into two steps. 

{\it Step 1.}  In this step we will prove  
\begin{align}\label{step-1-ker-nb4-0}
{\ker }\nb^4|_{(\a^*)^H\ox( \wedge^2 ({\ker}   {{\Re}})^\perp) \bigoplus \Sb_3  \bigoplus 1\ox  (P_\g^3 \wedge ({\ker}{{\Re}})^\perp)} \,=\,0\,. 
 \end{align}

We begin by applying $ \nb^4 $ on any arbitrary element of $(\a^*)^H\ox(\wedge^2 ({\ker}{{\Re}})^\perp) \bigoplus \Sb_3  \bigoplus 1\ox  (P_\g^3 \wedge ({\ker}{{\Re}})^\perp)$,
say, 
\begin{align}\label{general-element}
  & \sum_{1\leq i \leq r_{\a^H}\atop 1\leq j<l \leq r_\b}  \mu_{ijl} A^*_i\ox  (\lambda_j\wedge \lambda_l )\, \bigoplus\,  
\,	\sum_{1\le j\leq i \le r_\b \atop  j<l \leq r_\b}  \alpha_{ijl} \Big(  -2D^*_i \ox  (\lambda_j\wedge \lambda_l )  -D^*_j \ox  ( \lambda_i\wedge \lambda_l)   \nonumber \\	
&	      +    D^*_l \ox  (\lambda_i\wedge \lambda_j)\,   \Big)
\, \bigoplus\, \sum_{1\le  i \le \#[\g,\,\g] \atop 1\le j \le r_\b}1\ox \zeta_{ij} (\rho(\widetilde{\Bi}_i)\wedge \lambda_j) 
\end{align}
and we obtain
\begin{align}\label{eq-nb4-0}
	\nb^4\Big(    & \sum_{i\atop j<l}  \mu_{ijl} A^*_i\ox  (\lambda_j\wedge \lambda_l )\, \bigoplus\,  
	\,	\sum_{ j\leq i \atop  j<l}  \alpha_{ijl} \Big(  -2D^*_i \ox  (\lambda_j\wedge \lambda_l )  -D^*_j \ox  ( \lambda_i\wedge \lambda_l)   \nonumber \\	
	&	      +    D^*_l \ox  (\lambda_i\wedge \lambda_j)\,   \Big)
\, \bigoplus\, \sum_{i,j}1\ox \zeta_{ij} (\rho(\widetilde{\Bi}_i)\wedge \lambda_j) \Big) \,\,\,=\,\,    \\
	 \sum_{i\atop  j<l} & \mu_{ijl}  \big( (A^*_i\vee  D^*_j)\ox \lambda_l \, -\,  (A^*_i\vee  D^*_l )\ox \lambda_j \big) \bigoplus\,  	\sum_{j\leq i\atop  j<l}3\alpha_{ijl} \big(  (  D^*_i\vee  D^*_l)\ox  \lambda_j     \nonumber \\	
	& -   (D^*_i\vee   D^*_j)\ox  \lambda_l  \big)   \,+\, \sum_{i,j}\zeta_{ij} \big( \widetilde{\Bi}_i|_{\h\times\h}\ox  \lambda_j  -  D^*_j\ox \rho(\widetilde{\Bi}_i) \big).  \nonumber
\end{align}	

We will now equate the  right hand side of \eqref{eq-nb4-0} to zero, and show that this will
force all the coefficients   $ \mu_{ijl},
\alpha_{ijl}, \zeta_{ij}$ in \eqref{general-element}  to be zero. 

Since $D^*_j\ox \rho(\widetilde{\Bi}_i)  $	is a part of   basis of  $ (\h^*)^H\ox P_\g^3 $, it follows that $ \zeta_{ij}=0 $ for all $ i,j $.

Next consider the term of the form $ (-\vee -) \ox  \lambda_{r_\b} $ in \eqref{eq-nb4-0}. Then
\begin{align*}
\Big(  \sum_{i\atop  j<r_\b} \mu_{ijr_\b}   (A^*_i\vee  D^*_j) \,- \,\sum_{j\leq i\atop  j<r_\b}3\alpha_{ijr_\b}  (  D^*_i\vee  D^*_j)  \Big)\ox  \lambda_{r_\b}   	\,=\,0\,.
\end{align*}
From the above equation  it follows by substituting at dual basis element that  $ \mu_{ijr_\b} =0  $ and  $ \alpha_{ijr_\b} =0  $.  Next  we will consider the  term of the form $ (- \vee -) \ox \lambda_{r_\b-1} $ in \eqref{eq-nb4-0}.  We will get the following relation:
\begin{align*}
	\Big(  \sum_{i\atop  j<r_\b-1} \mu_{ijr_\b}   (A^*_i\vee  D^*_j) \,- \,\sum_{j\leq i\atop  j<r_\b-1}3\alpha_{ijr_\b}  (  D^*_i\vee  D^*_j)  \Big)\ox  \lambda_{r_\b-1}   	\,=\,0\,.
\end{align*}
Similarly, we conclude that   $ \mu_{ijr_\b-1} \,=\,0$ and  $\alpha_{ijr_\b-1} \,=\,0$. 
Proceeding in this way we can conclude that all the coefficients in \eqref{general-element} are $ 0 $. This proves \eqref{step-1-ker-nb4-0}.

{\it Step 2.} In this step we will prove the following isomorphism:
\begin{align}\label{step-2-ker-nb4-NgH}
 {\ker }\nb^4|_{(\a^*)^H\ox\big({\ker}{{\Re}}\wedge ({\ker}{\Re})^\perp \big) \bigoplus \Sb_2 \bigoplus  1\ox  (P_\g^3 \wedge {\ker}{{\Re}})}\,\simeq\, (\NC_{\g,\h})^{\dim   {\ker   {{\Re}}}} \,.
\end{align}

As in Step 1 we will apply the map  $ \nb^4 $ on any arbitrary element of  $(\a^*)^H\ox\big({\ker}   {{\Re}}  \wedge ({\ker}   {{\Re}})^\perp \big)  \bigoplus   \Sb_2$ $ \bigoplus    1\ox  (P_\g^3 \wedge {\ker }  {{\Re}})$. 
\begin{align}\label{nb4-I-III-Va}
\nb^4\Big( 	\sum_{i,j,l}  \mu_{ijl} A^*_i\ox ( \nu_l \wedge \lambda_j      ) \, \,+\, \,  &	\sum_{l \atop i\leq j}\beta_{ijl} \big(   D^*_i\ox (\nu_l\wedge \lambda_j )+  D^*_j\ox ( \nu_l\wedge \lambda_i) \,  \big) \, \,+\, \,     \nonumber \\	
\sum_{i,j}  1\ox \xi_{ij}(\rho(\widetilde{\Bi}_i)\wedge \nu_j)   \Big) 	\,\,
=&\,\,	\sum_{i,j,l}-\mu_{ijl} (A^*_i\vee  D^*_j  )\ox  \nu_l \,\\
\,&\,+\, \, \,  \sum_{ l \atop i\leq j }  -2\beta_{ijl}   (D^*_i \vee D^*_j )\ox \nu_l   
\, \,+\,  \, \sum_{i,j}\xi_{ij} \widetilde{\Bi}_i|_{\h\times\h}\ox  \nu_j  \nonumber	\\
=\, \, \sum_k \Big(  \sum_{i,l}-\mu_{ilk} (A^*_i\vee  D^*_l  )\,
&\,\,+\,   \sum_{i \leq  j}  -2\beta_{ijk}   (D^*_i \vee D^*_j )
\, \,+\,  \, 	 \sum_i\xi_{ik} \widetilde{\Bi}_i|_{\h\times\h}  \Big)\ox \nu_k    \nonumber 
\end{align}
Since $ \{ \nu_k\,\mid\,  1\,\le\, k\,\le\, r_0\} $ is a basis of $ {\ker}{{\Re}} $, it follows that the left hand side of \eqref{nb4-I-III-Va} is zero if and only if $  \sum_{i,l}-\mu_{ilk} (A^*_i\vee  D^*_l )\,
+\,   \sum_{i \leq j}  2\beta_{ijk}   (D^*_i\vee D^*_j)\,+\, \sum_i\xi_{ik} \widetilde{\Bi}_i|_{\h\times\h}\,=\,0$, for all $k\,=\,1,\,\dots,\, r_0$. 
Now the isomorphism \eqref{step-2-ker-nb4-NgH}  follows from  \eqref{elt-rest-part} and  Lemma \ref{lem-ker-nb3-isom-NgH}.

The isomorphism in \eqref{2nd-isom-lem-ker0} follows from  \eqref{step-1-ker-nb4-0} and \eqref{step-2-ker-nb4-NgH}. This completes the proof.
\end{proof}

\begin{lemma}\label{cokernel-part-in 4coh} Let $ \CC_{\g,H} $ be as in Definition \ref{def-NC-gH}. Then
there is an isomorphism between the following two vector spaces:
\begin{align*}
\frac{S^2(\h^*)^H \ox 1 }{\nabla^3\big((\h^*)^H\ox P_\g^1 {\mathlarger {\mathlarger{\mathlarger{\oplus}}}} \,\,1\ox P_\g^3 \big) }\, \simeq \, \CC_{\g,H}\,.
\end{align*}
\end{lemma}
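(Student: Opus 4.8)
The plan is to compute the image $I:=\nb^3\big((\h^*)^H\ox P_\g^1\,\oplus\,1\ox P_\g^3\big)$ inside $S^2(\h^*)^H$, and then to decompose $S^2(\h^*)^H$ so that $I$ appears as the preimage of $\Im\Psi_{\g,\h}$ together with an \emph{entire} complementary summand. First I would use the formulas \eqref{def-nabla3}. Since $\nb^3(\psi\ox f)=\psi\vee f|_\h\ox 1$ and, by Lemma \ref{lem-ker-im-nb1}, $\{f|_\h\mid f\in P_\g^1\}=\Re(P_\g^1)=\b^*$, the first summand $(\h^*)^H\ox P_\g^1$ contributes $(\h^*)^H\vee\b^*$. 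Using the basis $\{\rho(\widetilde{\Bi}_i)\}$ of $P_\g^3$ from Lemma \ref{lemma-Pg3-basis} and $\nb^3(1\ox\rho(\widetilde{\Bi}_i))=\widetilde{\Bi}_i|_{\h\times\h}\ox 1$, the second summand contributes $\mathrm{Span}\{\widetilde{\Bi}_i|_{\h\times\h}\}$. Thus
\[
I\,=\,\big((\h^*)^H\vee\b^*\big)\,+\,\mathrm{Span}\{\widetilde{\Bi}_i|_{\h\times\h}\mid 1\le i\le \#[\g,\,\g]\}\,.
\]

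The key step is the $H$-invariant direct sum decomposition
\[
S^2(\h^*)^H\,=\,S^2\big((\h\cap[\g,\,\g])^*\big)^H\,\oplus\,\big((\h^H)^*\vee\b^*\big),
\]
which I would derive from $\h=(\h\cap[\g,\,\g])\oplus\b$ together with \eqref{h decomposition component1} and \eqref{h decomposition component2}. Writing $\h=\h^H\oplus(H-1)\h$ as in Proposition \ref{representation-proposition}, one has $((H-1)\h)^H=0$; combined with $\h^H=\a^H\oplus\b$ and $\h\cap[\g,\,\g]=\a^H\oplus(H-1)\h$, taking $H$-invariants of $S^2(\h^*)$ kills all cross terms involving $(H-1)\h$ and leaves
\[
S^2(\h^*)^H\,=\,S^2((\a^H)^*)\,\oplus\,\big((\a^H)^*\vee\b^*\big)\,\oplus\,S^2(\b^*)\,\oplus\,S^2\big(((H-1)\h)^*\big)^H.
\]
Regrouping the first and last terms as $S^2((\h\cap[\g,\,\g])^*)^H$ isolates the complement $(\a^H)^*\vee\b^*\oplus S^2(\b^*)=(\h^H)^*\vee\b^*$, which coincides with $(\h^*)^H\vee\b^*$.

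Finally I would observe that the projection $\pi\colon S^2(\h^*)^H\to S^2((\h\cap[\g,\,\g])^*)^H$ attached to this decomposition is just restriction of bilinear forms to $\h\cap[\g,\,\g]$; since every element of $\b^*$ vanishes on $\h\cap[\g,\,\g]$, the summand $(\h^*)^H\vee\b^*$ is exactly $\ker\pi$. Hence $\pi$ induces $S^2(\h^*)^H/\big((\h^*)^H\vee\b^*\big)\simeq S^2((\h\cap[\g,\,\g])^*)^H$, and it sends $\widetilde{\Bi}_i|_{\h\times\h}$ to $\widetilde{\Bi}_i|_{(\h\cap[\g,\,\g])\times(\h\cap[\g,\,\g])}=\Psi_{\g,\h}(\Bi_i)$. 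As $\{\Bi_i\}$ is a basis of $S^2([\g,\,\g]^*)^G$ (a $G$-invariant form on $[\g,\,\g]$ being a combination of the Killing forms of the simple factors, which are mutually orthogonal by Corollary \ref{lem-ortho-center-gg}), this gives $\pi\big(\mathrm{Span}\{\widetilde{\Bi}_i|_{\h\times\h}\}\big)=\Im\Psi_{\g,\h}$. Passing to the further quotient then yields
\[
S^2(\h^*)^H/I\,\simeq\,S^2\big((\h\cap[\g,\,\g])^*\big)^H/\Im\Psi_{\g,\h}\,=\,\CC_{\g,H},
\]
as required. The main obstacle is the second step: establishing the $H$-invariant splitting, and in particular verifying that the only $H$-invariant cross terms between $\b$ and $\h\cap[\g,\,\g]$ arise through $(\a^H)^*$. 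This rests on the facts $((H-1)\h)^H=0$ and $[\h,\h]^H=0$ coming from Proposition \ref{representation-proposition} and \eqref{h decomposition component2}; once these are available the remainder is bookkeeping with the convention \ref{convension1} on dual decompositions, and the identification of $\pi(I)$ with $\Im\Psi_{\g,\h}$ is routine.
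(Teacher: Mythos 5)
Your proof is correct and follows essentially the same route as the paper: both compute $\nabla^3\big((\h^*)^H\ox P_\g^1\,\oplus\,1\ox P_\g^3\big)$ explicitly using $\Re(P_\g^1)=\b^*$ and the basis $\{\rho(\widetilde{\Bi}_i)\}$, and both rest on the invariant splitting $S^2(\h^*)^H= S^2\big((\h\cap[\g,\,\g])^*\big)^H\oplus\big((\a^H)^*\vee\b^*\oplus S^2(\b^*)\big)$ obtained from Proposition \ref{representation-proposition} and Lemma \ref{lem-orthog-ideals}, so that the quotient becomes ${\rm coker}\,\Psi_{\g,\h}=\CC_{\g,H}$. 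Your explicit remark that the projection onto $S^2\big((\h\cap[\g,\,\g])^*\big)^H$ is restriction of forms is precisely the step the paper performs implicitly when it replaces $\widetilde{\Bi}_i|_{\h\times\h}$ by $\widetilde{\Bi}_i|_{(\h\cap[\g,\,\g])\times(\h\cap[\g,\,\g])}$.
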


\begin{proof}
Recall that  $  \a\,=\, \z(\h)\cap [\g,\,\g] ,\, \h^H \,=\,\a^H\oplus\b,\,  \h\cap [\g,\,\g]\,=\, \a+[\h,\,\h], $  and $ (H-1)\h\,= \, (H-1)\a + [\h,\,\h]$; see \eqref{h decomposition component1},  \eqref{h decomposition component2}. Now using Proposition  \ref{representation-proposition} and Lemma \ref{lem-orthog-ideals}, we have the
following decomposition:
	\begin{align*}
		S^2(\h^*)^H \, &=\, S^2\big((\h^H)^* \big)^H \,\oplus\,  S^2\big(((H-1)\h)^* \big)^H     \\
		&=\, S^2((\h^H)^* ) \,\oplus\,  S^2\big(((H-1)\h)^* \big)^H     \\
		&=\,  S^2((\a^H)^*)  \,\oplus\, S^2(\b^*)  \,\oplus\, (\a^H)^*\vee \b^* \,\oplus\,  S^2\big(((H-1)\a)^* \big)^H   \oplus\,  S^2([\h,\h]^* )^H       \\
		&=\, S^2\big((\h \cap [\g,\,\g] )^*\big)^H  \,\oplus\, S^2(\b^*)  \,\oplus\, (\a^H)^*\vee \b^* \,.
	\end{align*}	
	
Let $ r\,:=\,\#[\g,\,\g] $, $ \widetilde{\Bi}_i $ be as in \eqref{definition-B-tilde}, and $ \nb^3 $ be as in \eqref{def-nabla3}. Since $ \a^H, (H-1)\a, [\h,\h] $ are mutually orthogonal with respect to $ \widetilde{\Bi}_i $ for all $ i=1,\dots,r $ , and $ {\Re }(P_\g^1) = \b^*$,  we also have	
	\begin{align*}
		\nb^3 &\big((\h^*)^H\ox P_\g^1 \bigoplus 1\ox P_\g^3     \big) \,=\\ 
		&\Big( (\a^H)^*\vee \b^*\oplus\, S^2(\b^*)  \,\oplus\,     \big\{\sum_i  \eta_i \widetilde {\Bi}_i|_{(\h \cap [\g,\,\g])\times (\h \cap [\g\g] ) }
		\,\mid\, (\eta_1,\, \dots,\, \eta_r)\,\in\, \R^r \big\}              \Big)\ox 1\,.
	\end{align*}	
	Now the proof follows from the fact that  $ \big\{\sum_i  \eta_i \widetilde {\Bi}_i|_{(\h \cap [\g,\,\g])\times (\h \cap [\g\g] ) }
	\,\mid\, (\eta_1,\, \dots,\, \eta_r)\,\in\, \R^r \big\}  \,\subset\, S^2\big((\h \cap [\g,\,\g] )^*\big)^H $.
\end{proof}

\medskip
{\bf Proof of Theorem \ref{thm-4th-coh-general}.} 
Recall the Koszul complex $(\CS^*\,:=\,S(\h^*)^H\ox\wedge P_\g,\, \nabla )$ as defined in Section \ref{sec-Koszul-complex}. 
Using   Theorem \ref{Cartan-thm-disconnected}, we have the following isomorphism:
\begin{align}\label{h4-isom}
	H^4(G/H,\, \R)\, \simeq\, \frac{{\rm ker}\, \nabla^4 }{{\rm Im}\, \nabla^3 }\,,
\end{align}
where the differentials $\nabla^4$ and $\nabla^3$ are as in \eqref{def-nabla4} and \eqref{def-nabla3}, respectively. The numerator and the denominator in \eqref{h4-isom} will be identified.   Recall that 
\begin{align*}
{\CS}^4 \, =\, S^2(\h^*)^H \ox 1 \, \bigoplus\, (\h^*)^H\ox \wedge^2P_\g^1 &  \, \bigoplus\, 1\ox P_\g^3 \wedge P_\g^1 \ \bigoplus\ 1\ox\wedge^4 P_\g^1\,,
\end{align*}
$$\nb^3\big((\h^*)^H\ox P^1_\g \bigoplus 1\ox P_\g^3    \big) \, \subset \, S^2(\h^*)^H \ox 1  \,,\text{ and }  \,  
\nb^3\big(1\ox( \wedge^3  P^1_\g )   \big) \, \subset \, (\h^*)^H \ox( \wedge^2  P^1_\g )    \,.
$$
Thus, the right hand side of \eqref{h4-isom} is isomorphic to the following vector space:
\begin{align*}
 \frac{S^2(\h^*)^H \ox 1 }{\nabla^3\big((\h^*)^H\ox P_\g^1 \, \,{\mathlarger {\mathlarger{\mathlarger{\oplus}}}} \,\,1\ox P_\g^3 \big) }\bigoplus 	\frac{{\ker}\nb^4|_{(\h^*)^H\ox \big(  \wedge ^2 P_\g^1 \big) \bigoplus 1\ox (P_\g^3\wedge P_\g^1 )    } }{\nb^3( 1\ox \wedge ^3 P^1_\g) }\bigoplus {\ker}\nb^4|_{ \big(1\ox (  \wedge ^4 P_\g^1) \big) }\,.
\end{align*}	
Now the  isomorphism follows from Lemma \ref{cokernel-part-in 4coh};
 \eqref{ker-nb4-middle-part},   \eqref{ker-nb4-middle-part-refine}, Lemma \ref{ker-nb4-complemntry-part}; and Lemma \ref{lem-ker-im-nb2}(1).

In $(1)$,  the proof follows from the isomorphism together with the fact that $\g=[\g,\g]+\h$.
Furthermore, $(2)$ follows   from   Lemma \ref{lem-decomposition-of-S(h)-new}(4). 
This completes the proof.
 \qed

\section{Computations and applications}\label{appl-thm}
In this section, we will see that Theorems \ref{thm-3rd-coh-general} and \ref{thm-4th-coh-general} transform the problem of computing the third and fourth cohomologies of a general compact homogeneous space into the problem of determining the rank of a specific associated matrix. Recall that the dimensions of the associated vector spaces, namely, $\NC_{\g,\h}$ and $\CC_{\g,H}$ as defined in Definition \ref{def-NC-gH}, play a crucial role in computing the third and fourth Betti numbers. In the following discussion, we will
demonstrate how these dimensions can be computed using the rank of a matrix naturally associated with the pair $(G, H)$. We will also provide certain general reduction processes which will facilitate the computation of cohomologies.

\subsection{Some general considerations for compact case computations}\label{computation-compact-case}
In this section, we establish some general principles that will be useful in constructing the matrix naturally associated with the pair $(G, H)$.

We need some notions associated to  representation of a compact Lie group over finite dimensional vector spaces over $\R$;  see pp. 93--101, \cite{BD} for details.

\begin{definition}\label{type-definition}
	Let $K$ be a compact Lie group not necessarily connected, and 
	$V$ be a finite dimensional vector space over $\R$
	on which $K$ is acting
	$\R$-linearly. Assume further $V$ is an irreducible representation of $K$. Then by Schur's lemma it follows that  ${\rm End}_K V$ is a division algebra over $\R$. The 
	irreducible representation $V$ of $K$ is called {\it real, complex or quaternionic type}
	according as the ${\rm End}_K V $ is isomorphic to $\R$,  $\C$ or $\H$  as $\R$-algebras, respectively. In particular $\dim {\rm End}_K V =1, 2 \text{ or } 4$ as a $\R$-vector space.      \qed
\end{definition}

In the next lemma we record certain facts related to the space of invariant bilinear forms  on a underlying vector space of a representation of a compact Lie group.

Let $V, W$ be  $\R$-representations of $K$. Let {\it $P (V, W)^K $ be the space of $K$-invariant $\R$-bilinear pairing}, that is, maps $\phi : V \times W \to \R$ such that $\phi$ is $\R$-linear in each variable and $\phi (g \cdot v, g \cdot w) = \phi (v,w)$ for all $g \in K, (v,w) \in V \times W$. If $V, W $ are irreducible $K$- representations, then it can be deduced that $\dim S^2 (W^*)^K =1$,
$\dim P (V, W)^K = \dim {\rm End}_K V$ provided  $V \simeq W$ as $K$-modules, and 
$\dim P (V, W)^K =0$, otherwise. 

Recall that, if  $V^K $ denotes the space of fixed points in $V$ under the action of $K$, that is, $V^K\,=\, \{ v \,\mid\, g\cdot v\, =\,v \text{ for all } g \,\in\, K\}$ and let 
$(K -1) V$ be the $\R$-span of the elements of the set $\{ g\cdot v -v \,\mid\, g \,\in\, K, \, v \,\in\, V\}$, then $(K-1)V$ is $K$-invariant and $V = V^K \oplus (K-1) V$; see Proposition \ref{representation-proposition}.

\begin{proposition}\label{real-cplx-quat} 
Let $K$ be a compact Lie group not necessarily connected and 	$V$ be a finite dimensional vector space over $\R$	on which $K$ is acting	$\R$-linearly.
\begin{enumerate}
\item 
Let $V$ be an isotypical $K$-module and $n$ be the number of irreducible 	components of $V$.
Let us fix an  irreducible $K$-subspace of $V$, say, $W$.
Let $V = W_1 \oplus \cdots \oplus W_n$ be a decomposition of $V$ into irreducible $K$-invariant subspaces $W_i \simeq W$. Then 
		$$
		S^2 (V^*)^K \simeq \bigoplus_i S^2 (W_i^*)^K \oplus \bigoplus_{i < j} P(W_i , W_j)^K
		$$
		In particular we have
		$$\dim S^2 (V^*)^K = n + \frac{n^2-n}{2} \, \dim {\rm End}_K W.
		$$
		More precisely, $\dim S^2 (V^*)^K = n(n+1)/2, \,  n^2$ or $ 2 n^2-n$ according as
		$W$ is real, complex or quaternionic type, respectively.
		\item 
		Let $V_1, \dots, V_r$ be an enumeration of all the distinct isotypical components
		of $(K-1)V$. 
		Then
		$$
		S^2 (V^*)^K \simeq  S^2 ((V^K)^*) \oplus \bigoplus_{i=1}^r S^2 (V_i^*)^K.
		$$
		\begin{enumerate}
			\item Let $W_i \subset V_i$ be an irreducible $K$-subspace and $n_i$ be the number of irreducible factors of $V_i$ for all $i$. 
			Let $q := \dim V^K$.
			Then
			$$\dim S^2 (V^*)^K = 
			\frac{q (q+1)}{2} + \sum_i  (n_i + \frac{n_i^2-n_i}{2} \, \dim {\rm End}_K W_i ). 
			$$
			\item Thus we have
			$$
			\frac{q (q+1)}{2} +  \sum_i \frac{n_i (n_i +1)}{2}   \,\leq  \, \dim S^2 (V^*)^K  \, \leq \,  \frac{q (q+1)}{2} + \sum_i 2 n_i^2- n_i.
			$$
		\end{enumerate}
	\end{enumerate}
\end{proposition}

\begin{proof}
	The proof of the first part of (1) is straightforward and hence omitted. The next part follows from the fact that if $W$ is
	irreducible then $\dim S^2 (W^*)^K =1$ and 
	the dimension of the space of all $K$-invariant bilinear forms on $W$ is $\dim {\rm End}_K W$.
	
	The proof of (2) follows from (1) and the fact that 
	for all $i, j$ with $i \neq j$ the spaces $V_i$ and $V_j$ are orthogonal with respect to any $K$-invariant bilinear form on $V$.
\end{proof}

\begin{definition}\label{definition-invariant-min-ideal}
	Let $K$ be a compact Lie group, not necessarily connected, with Lie algebra $\k$. Let $\a \subset [\k, \k]$ be an ideal of $\k$. We denote the set of simple factors of $\a$ by $\IC(\a)$ and the number of simple factors in $\a$ by $\#\a$. Furthermore, if $\a$ is invariant under the adjoint action of $K$, we define $\#(\a, K)$ as the number of nonzero minimal $K$-invariant ideals of $\a$.\qed
\end{definition}

Let $K$ be a compact Lie group, not necessarily connected, and let $\l$ be a Lie subalgebra of $\k$ containing the commutator $[\k, \k]$. Then $\z(\l) = \z(\k) \cap \l$ and $[\l, \l] = [\k, \k]$. We assume further that $\l$ is invariant under the adjoint action of $K$.  
Let $t = \#[\k, \k] = \#[\l, \l]$ and then  $\l = \z(\l) \oplus \k_1 \oplus \cdots \oplus \k_t$, where $\k_i$ (for $i = 1, \dots, t$) are the simple factors of $[\k, \k] = [\l, \l]$.

Consider the adjoint action of $K$ on the set $\IC([\k, \k])$ consisting of the simple factors of $[\k, \k]$. Since $K^0$ acts trivially on the finite set $\IC([\k, \k])$, the action of $K$ factors through the finite group $K/K^0$. Let $k$ be the number of finitely many
orbits under the action of $K/K^0$ on $\IC([\k, \k])$. We fix an enumeration of the
set of orbits, and let $\p_i$ denote 
the sum of all the simple ideals in the $i$-th orbit, for all $1 \leq i \leq k$.
In particular, for all $i = 1, \dots, k$, the simple ideals of $\p_i$ are all isomorphic. It follows that if $[\k, \k] \neq 0$, then $\p_1, \dots, \p_k$ are precisely the minimal $K$-invariant nonzero ideals of $[\k, \k]$. 
Further,  we have $\l = \z(\l) \oplus \p_1 \oplus \cdots \oplus \p_k$.

\begin{lemma}\label{lem-decomposition-of-S(h)-new}
Let $K$ be a compact Lie group, not necessarily connected. Let $\l \subset \k$ be a Lie subalgebra such that $[\k, \k] \subset \l$, and assume that $\l$ is invariant under the adjoint action of $K$. Let $t:= \#[\k, \k]$. Using the notation introduced above, 	we define $\widetilde{\Bi}^\l _{\k_i}$ for $1 \leq i \leq t$ 
	and $\widetilde{\Bi}^\l _{\p_j}$ for  $1 \leq j \leq k$ as in \eqref{definition-B}. Then the following statements hold:
	
	\begin{enumerate}
		\item  \label{ref-lem-1} $S^2 (\l^*)^{K^0} = S^2 (\z(\l)^*) \bigoplus S^2 ( \k_1^*)^{K^0} \oplus \dots \oplus S^2 ( \k_t^*)^{K^0}$.
		\begin{enumerate}
			\item \label{ref-lem1-a} For all $i = 1, \dots , t$, the space $S^2 ( \k_i^*)^{K^0}$ is the $\mathbb{R}$-span of $\widetilde{\Bi}^\l _{\k_i}$, implying \\  $\dim  S^2 ( \k_i^*)^{K^0} = 1$.
			\item 	\label{cor-basis-Bi}  Moreover, $\{ \,\widetilde{\Bi}^\l _{\k_i} \mid 1\leq i \leq \#[\k, \k] \,\}$ forms a basis of $S^2 ([\k, \k]^*)^{K^0}$.
		\end{enumerate}
		
		\item  \label{ref-lem-2} $S^2 (\l^*)^K = S^2 (\z(\l)^*)^{K/K^0} \bigoplus S^2 ( \p_1^*)^{K} \oplus \dots \oplus S^2 ( \p_k^*)^{K}$.
		\begin{enumerate}
			\item \label{ref-lem1-a-2} For all $i = 1, \dots , k$, the space $S^2 ( \p_i^*)^{K}$ is the $\mathbb{R}$-span of $ \widetilde{\Bi}^\l _{\p_i}$, implying  \\ 
			$\dim S^2 ( \p_i^*)^{K} = 1$.
			\item 	\label{cor-basis-Bi-2}  Moreover, $\{\, \widetilde{\Bi}^\l _{\p_i} \mid 1\leq i \leq k \,\}$ forms a basis of $S^2 ([\k, \k]^*)^{K}$. Thus   $\dim  S^2 ([\k, \k]^*)^K $ $= k$.
		\end{enumerate}
		
		\item  \label{ref-lem-3} $\dim S^2 (\l^*)^K = \dim S^2 (\z(\l)^*)^{K/K^0} + k$.
		
		\item  \label{ref-lem-4} If $K$ is either connected or all the simple factors of $[\k, \k]$ are pairwise non-isomorphic, then $\dim S^2 (\l^*)^K = \dim S^2 (\z(\l)^*)^{K/K^0} + \# [\k, \k]$. In particular, if $K^0$ is simple, then $\dim S^2 (\k^*)^K = 1$.
	\end{enumerate}
\end{lemma}

\begin{proof}
We first prove  \eqref{ref-lem-1}. Observe that if $B \,\in\, S^2 (\l^*)^{K^0}$ then $B$ is $\l$-invariant.  As $\k_i$ is a simple ideal  	for all $i$,  by Lemma \ref{lem-orthog-ideals},  it is 	immediate that $B ( \k_i, \k_j)=0=	B ( \z (\l), \k_k) $,  for all $i,j$ with $i \neq j$ and for all $k$.  Thus we have 	$S^2 (\l^*)^{K^0} = S^2 (\z(\l)^*) \bigoplus S^2 ( \k_1^*)^{K^0} \oplus \dots \oplus S^2 ( \k_t^*)^{K^0}$.

To see a proof of  \eqref{ref-lem1-a}, 	first observe that compactness of $K$  forces the commutator $[\k,\, \k]$ to be a compact Lie algebra. 	Thus the simple factor $\k_i$ is also compact, and this in turn implies $\k_i$ to be absolutely simple, that is,  $\C \ox_\R \k_i$ is a simple Lie algebra over $\C$. Thus by Schur's lemma 	any $K^0$-invariant symmetric bilinear form on $\k_i$ is a (real) scalar multiple of the Killing form  on $\k_i$. The proof now follows from the fact that the Killing form 	$\widetilde{\Bi}_{\k_i}^\l$ on $\k_i$ is $K^0$-invariant.  	The proof of \eqref{cor-basis-Bi} follows from \eqref{ref-lem-1} and \eqref{ref-lem1-a}.

We next prove \eqref{ref-lem-2}.  To prove the first part of \eqref{ref-lem-2} we use Lemma \ref{lem-orthog-ideals}  and the proof follows analogously as the proof of the first part of \eqref{ref-lem-1}.

We now prove the part  \eqref{ref-lem1-a-2}. For notational simplicity we will assume	$i=1$, and show that $S^2(\p_1^*)^K$ is spanned by $ \widetilde{\Bi}_{\p_1}^\l$.	Let $\p_1=\k_{i_1}\oplus \dots\oplus \k_{i_{n_1}}$, and $S_1:=\{ \k_{i_1}, \dots, \k_{i_{n_1}}\}$. Then $K$ acts transitively on $S_1$. Any arbitrary element in $S^2(\p_i^*)^K$ is of the following form:
	$$
	c_{i_1}\widetilde {\Bi}_{\k_{i_1}}^\l  + \dots +  c_{i_{n_1}}\widetilde {\Bi}_{\k_{i_{n_1}}}^\l \,,
	$$
where $c_{i_j}\in \R$ for $1\leq j\leq n_1$.  Since $K$ acts transitively on $S_1$, 	it follows that all the coefficients are equal, that is, $ c_{i_1} \,=\, c_{i_j}$, for  $1\leq j\leq n_1$. Thus the above sum turns out to be $c_{i_1}\widetilde {\Bi}_{\p_1}^\l$, and in particular,   
	$$
	S^2(\p_1^*)^K\,=\,  \R\sum_{  1\leq j\leq n_1} \widetilde {\Bi}_{\k_{i_j}}^\k   \,=\, \R \widetilde {\Bi}_{\p_1}^\k \,.
	$$
	The proof of the part \eqref{cor-basis-Bi-2} follows from \eqref{ref-lem-2} and \eqref{ref-lem1-a-2}. 	The proof of  \eqref{ref-lem-3} is immediate from \eqref{ref-lem-2}. The  proof of \eqref{ref-lem-4} follows  from \eqref{ref-lem-3}, since  $K$ acts trivially on  $\IC ([\k,\k])$ in this case.   
\end{proof}

\subsection{The first reduction}\label{first-reduction}
In this section, we describe a basic reduction process which helps facilitating the computation of
cohomologies of compact connected homogeneous spaces. This reduction allows us to 
express any such spaces
as a product of a compact Lie group, and another homogeneous space, up to a finite covering.

We formulate some basic definitions for clarity.

\begin{definition}\label{smallest connected normal subgroup}
	Let $G$ be a  connected Lie group, and let $C$ be a subgroup of $G$, not necessarily closed or connected. Then
	it is clear that there exists a minimal dimensional closed, connected, normal subgroup of $G$ containing $C$. This subgroup is unique if $C$ is connected, and not necessarily so, otherwise.
	We call this subgroup a \emph{connected normal closure} of $C$ in $G$.
	
	If $A$ is a closed connected normal subgroup of $G$, then a closed connected normal subgroup $B$ is called a \emph{connected normal complement} of $A$ if $G = A \cdot B$ and $A \cap B$ is a discrete (hence finite central) subgroup of $G$.
	If $G$ is compact connected and $A$ is as above then it is easy to see that a
	connected normal complement of $A$ 
	always exists, and, 
	notably, if $A$ contains the connected center $Z(G)^0$ of $G$, then $A$ admits a unique connected normal complement.   \qed
\end{definition}

\begin{remark}\label{remark-first-reduction}
	In Definition \ref{smallest connected normal subgroup}, if $H$ is further assumed to be connected, then the Lie algebra of its connected normal closure is given by
	\begin{equation}
		\pi^\g_{\z(\g)} (\h) + \sum_{ i\in J_\h} \g_i,
	\end{equation}
	where $J_\h := \{ i \mid \pi^\g_{\g_i} (\h) \neq 0 \}$ (see  \eqref{notation-projection} for notation).    \qed
\end{remark}

Let $G$ be a compact connected Lie group, and let $H$ be a closed subgroup, not necessarily connected. 
For the compact homogeneous space $G/H$, one can obtain a product structure (up to a finite covering)   as follows.  Let $F$ be a connected normal closure of $H$, and let $E$ be a connected normal complement of $F$. The finite central subgroup $E \cap F$ of $G$ acts on $E \times (F/H)$ by
\begin{equation}\label{action}
	\gamma \cdot (x, y H) := ( \gamma x, \gamma^{-1} yH),
\end{equation}
for all $\gamma \in E \cap F$ and $(x, y H) \in E \times (F/H)$. It can be verified that this action  is free, and the map
\begin{align}\label{first reduction covering map}
	\delta : E \times (F/H) \to G/H, \quad \text{defined by} \quad \delta (x, y H) := xy H,
\end{align}
is a covering map, where the fibers are exactly the orbits of the action of 
$E \cap F$ on $E \times (F/H)$ as in \eqref{action}.

The following lemma when used with the K\"{u}nneth's theorem for de Rham cohomology reduces
computing the first four Betti numbers of $G/H$  to the case where $G$ is the connected normal closure of $H$.

\begin{lemma}\label{first-reduction-lemma}
	Let $G$ be a compact connected Lie group, and let $H$ be a closed subgroup, not necessarily connected. 
	Let $F$ be a connected normal closure of
	$H$ and $E$ be a connected normal complement
	of $F$ in $G$.
	Then,   $i = 1, 2,  3, 4$,
	\begin{equation}
		H^i (G/H)=	H^i (E \times (F/H)).
	\end{equation}
\end{lemma}

\begin{proof}
	Let $H_1 := \{e\} \times H$ and $\Gamma := 
	\{ (\gamma, \gamma^{-1}) \mid \gamma \in E \cap F \} $. Then $\Gamma H_1 \subset E \times F$ is a closed subgroup, and moreover, 
	$G/H$ and $(E \times F ) / \Gamma H_1$
	are diffeomorphic. Now the proof follows directly from Theorems \ref{thm-3rd-coh-general} and \ref{thm-4th-coh-general}, along with \cite[Theorems 3.3, 3.6]{BCM} and the fact that 
	$E \cap F \subset Z(G)$.
\end{proof}

We next record a consequence of the above lemma.

\begin{theorem}\label{first-reduction-theorem}
	Let $G, H, F, E$ be as in the Lemma \ref{first-reduction-lemma}. Further assume that $\z(\g) \subset 
	\h$ (in particular we may also assume $G$ to be semisimple).  
	Let  $F'$ be the connected normal closure of
	$H^0$ and $E'$ be the connected normal complement
	of $F'$ in $G$ (the condition  $\z(\g) \subset 
	\h$ forces $E, E'$ to be semisimple).
	\begin{enumerate}
		\item Then  $H^3 (G/H) \simeq  H^3 (G/H^0) \simeq	H^3 (E \times (F/H)) \simeq  H^3 (E' \times (F'/H^0))$ and moreover,
		$$\dim H^3 (G/H) = \# \e + \dim H^3 (F/H) = \#\e + \dim H^3 (F/H^0).$$	
		\item We also have $\dim H^4 (G/H) = \dim H^4 (F/H).$		
	\end{enumerate}		
\end{theorem}
\begin{proof}
The proofs follow from the facts that  	$E, E'$ are semisimple, 	Lemma \ref{first-reduction-lemma} and straightforward computations using 	Theorems \ref{thm-3rd-coh-general} and	\ref{thm-4th-coh-general}.
\end{proof}

\subsection{The matrix associated to the pair $(\g, H)$ and a basis of 
	$S^2( (\z(\h) \cap [\g, \g])^*)^H$}
\label{associated matrix}

Let $G$ be a compact connected Lie group, and $H \subseteq G$ be a closed subgroup, which is not necessarily connected.  
Let us now return to our objective of calculating  $ \dim \NC_{\g,\h} $ and $\dim \CC_{\g,H}$. We will apply Lemma \ref{lem-decomposition-of-S(h)-new}, and the associated setting for $K := H$ and $\l := \h \cap [\g, \g]$.

Let $\Psi_{\g, \h} : S^2([ \g, \g]^*)^G \to S^2( (\h \cap [\g, \g])^*)^H$ be the restriction map as defined in \eqref{map-Psi-g-h}. Using Lemma \ref{lem-decomposition-of-S(h)-new}, since 
$$
S^2( (\h \cap [\g, \g])^*)^H = S^2( (\z(\h) \cap [\g, \g])^*)^H \oplus S^2( [\h, \h]^*)^H,
$$ 
we may decompose $\Psi_{\g, \h} = \Psi_{\g, \h}^1 \oplus \Psi_{\g, \h}^2$, where:

\begin{itemize}
	\item $\Psi_{\g, \h}^1 : S^2([ \g, \g]^*)^G \to S^2( (\z(\h) \cap [\g, \g])^*)^H$ is given by
	\[
	\Psi_{\g, \h}^1 (\phi) := \phi|_{(\z(\h) \cap [\g, \g]) \times (\z(\h) \cap [\g, \g])},
	\]
	
	\item $\Psi_{\g, \h}^2 : S^2([ \g, \g]^*)^G \to S^2([ \h, \h]^*)^H$ is given by
	\[
	\Psi_{\g, \h}^2 (\phi) := \phi|_{[\h, \h] \times [\h, \h]}.
	\]

\item Let $m := \# [\g, \g]$, and decompose $[\g, \g] = \g_1 \oplus \cdots \oplus \g_m$  
into the direct sum of simple ideals. 
Let $k$ be the number of orbits under the adjoint action of $H$ on the finite set $\IC([\h, \h])$ (as the action of $H^0$ is trivial, this action factors through the finite group $H/H^0$), and let $\p_1, \dots, \p_k$ denote the sum of the simple ideals in each of the $k$ orbits.
\end{itemize}

Consider the bases $\BC_\g := \{ \tilde{\Bi}_{\g_j}^{\g} \mid 1 \leq j \leq m \}$ for $S^2([ \g, \g]^*)^G$ and $\BC_H := \{ \tilde{\Bi}_{\p_i}^{\l} \mid 1 \leq i \leq k \}$ for $S^2([ \h, \h]^*)^H$ (see Lemma \ref{lem-decomposition-of-S(h)-new}). Note that the basis for $S^2([ \h, \h]^*)^H$ depends on how the finite group $H/H^0$ acts on the finite set $\IC([\h, \h])$.

\begin{itemize}
	\item The action of $H$ on $\z(\h) \cap [\g, \g]$ factors through $H/H^0$. 	
	Let $\BC$ be a basis for $S^2((\z(\h) \cap [\g, \g])^*)^H$ (this can be done using Lemma \ref{real-cplx-quat}).  
	Let  $q := \dim(\z(\h) \cap [\g, \g])^H$,  let $r$
	be the number of distinct isotypical component of the $\R$-representation $z(\h) \cap [\g, \g]$ of the
	of the finite group $H/H^0$ (which is obtained
	through the adjoint representation of $H$).  
	Let $n_i$ be the number of irreducible factor of $i$-th isotypical component
	of  $(H-1) (z(\h) \cap [\g, \g])$ for all $i = 1, \dots, r$; see Proposition \ref{representation-proposition} for the notation.  Then from Proposition
	\ref{real-cplx-quat} 
	it follows that
	\[
 \frac{q (q+1)}{2} + \sum_{i=1}^r 2 n_i^2-n_i \geq  
	 \dim S^2((\z(\h) \cap [\g, \g])^*)^H \geq \frac{q(q+1)}{2} + \sum_{i=1}^r  \frac{n_i (n_i+1)}{2}  \
	\]
	
	\item Let $p := \dim S^2((\z(\h) \cap [\g, \g])^*)^H$.  Let $A_1 := (a_{ij}) \in {\rm M}_{p \times m} (\mathbb{R})$ be the matrix of the linear map $\Psi_{\g, \h}^1$ with respect to the bases $\BC_\g$ and $\BC$.
	
	\item We define the $k \times m$ matrix $A_2 := (\alpha_{ij}) \in {\rm M}_{k \times m} (\mathbb{R})$ by
	\[
	{\widetilde{\Bi}_{\g_j}^\g}|_{[\h, \h] \times [\h, \h]} = \sum_{i=1}^k \alpha_{ij} \widetilde{\Bi}_{\p_i}^{\l}.
	\]
	Since the Killing form on a compact semisimple Lie algebra is negative definite, we conclude that $\alpha_{ij} > 0$ if and only if $\pi^\g_{\g_j} (\p_i) \neq 0$, and $\alpha_{ij} = 0$ otherwise. If $G$ is semisimple and $H$ is connected, then $k = \#[\h, \h]$, and in this case, the above constants also appear in \cite[p. 175]{LW} in a slightly different format. Thus, $A_2$ is the matrix for the linear map $\Psi_{\g, \h}^2$ with respect to the bases $\BC_\g$ and $\BC_H$.
\end{itemize}

We are now in a position to define the matrix $A_{\g, H}$.

\begin{definition}\label{definition-of-associated matrix}
	{\rm Let $A_{\g, H}$ be the matrix in ${\rm M}_{(p+k) \times m} (\mathbb{R})$ such that the upper $p \times m$ block is the matrix $A_1$ and the lower $k \times m$ block is the matrix $A_2$.  Thus,
		$$ A_{\g, H} :=
		\begin{bmatrix}
			A_1  \\
			A_2 \\
		\end{bmatrix}. 
		$$ 
		Then $A_{\g, H}$ is the matrix of the map $\Psi_{\g, \h}$ with respect to the bases $\BC_\g$ of $S^2([ \g, \g]^*)^G$ and $\BC \cupdot \BC_H$ of $S^2((\h \cap [\g, \g])^*)^H$.}     \qed
\end{definition}

\begin{remark}{\rm We now make two remarks on the above matrix.
\begin{enumerate}
\item
Only ${\rm Rank} \, A_{\g, H}$ and the dimension of $S^2( (\h \cap [\g, \g])^*)^H$ are relevant when determining these dimensions. When $G$ is  semisimple ${\rm Rank} \,
A_{\g, H}$   only ingredient need to compute   $\dim H^3 (G/H)$.
Regarding the computation of the third and fourth cohomologies of $G/H$ for a general compact connected group $G$ modulo a general closed subgroup $H$, the above represents the optimal level of understanding attainable. In specific settings, sharper results may be obtained (see Remark \ref{remark-LW} below).

\item
Observe that the bases $\BC_\g$ of $S^2([ \g, \g]^*)^G$, $\BC_H$ of $S^2([ \h, \h]^*)^H$, and, consequently, the matrix $A_2$ are naturally chosen using the Killing form of the simple factors of the compact Lie algebras. However, the basis $\BC$ of $S^2((\z(\h) \cap [\g, \g])^*)^{H/H^0}$, and consequently the matrix $A_1$, is not a natural one and depends on how $\h$ is positioned in $\g$.
Moreover,  the cardinality of the basis $\BC$
depends on the types (see Definition \ref{type-definition}) and number of irreducible factors
in each isotypical component of the 
$\R$-representation of the finite
group $H/H^0$ on the $\R$- vector space 
$\z(\h) \cap [\g, \g]$; see also Lemma \ref{real-cplx-quat}
in this context. 
Thus, if $\z(\h) \cap [\g, \g] \neq 0$, the matrix $A_{\g, H}$ depends on the choice of a basis for $S^2((\z(\h) \cap [\g, \g])^*)^H$.    \qed
\end{enumerate}
}  
\end{remark}

In light of the above considerations, we now record some useful consequences for the lower bounds of $\dim \NC_{\g,\h}$. 

\begin{proposition}\label{rank of the restriction map}
Let $G$ be a compact connected Lie group and $H$ be a closed subgroup, not necessarily connected.	Let 	$q := \dim(\z(\h) \cap [\g, \g])^{H/H^0}$, and
 $p := \dim S^2((\z(\h) \cap [\g, \g])^*)^H$. Let $m := \# [\g, \g]$, and $k$ be the number of orbits under the adjoint action of $H/H^0$ on the finite set $\IC([\h, \h])$.
Furthermore, consider the representation 	of the finite group $H/H^0$ on the $\R$-vector space $z(\h) \cap [\g, \g]$, and let 
$r$ 	be the number of distinct isotypical component of the $H$-invariant subspace $(H-1)(z(\h) \cap [\g, \g])$; see Proposition \ref{representation-proposition} for the notation used.   We fix an enumeration of the isotypical components of   $(H-1)(z(\h) \cap [\g, \g])$, and 	let $n_i$ be the number of irreducible factors  of the $i$-th isotypical component	of  $(H-1)(z(\h) \cap [\g, \g])$ for all $i = 1, \dots, r$. 	
\begin{enumerate}
		\item 	Then
		\[
		\dim \NC_{\g,\h} \geq m - (k + p) \geq m - \left( k +  \frac{q (q+1)}{2} +
		 \sum_{i=1}^r 2 n_i^2- n_i \right).
		\]

		\item In particular, if $H/H^0$ acts transitively on $\IC([\h, \h])$, then $k = 1$ and hence 
		\[
		\dim \NC_{\g,\h} \geq m - (1 + p).
		\]
	\end{enumerate}
\end{proposition}

The following corollary highlights an interesting consequence of the preceding proposition.

\begin{corollary}\label{cor-finitegp1}
Let $G$ be compact semisimple, and $H$ be a closed subgroup which is not necessarily connected. Let $m := \#\g$ and $p := \dim S^2(\z(\h)^*)$.   If $k$ is the number of orbits
		under the adjoint action of $H/H^0$ on the finite set $\IC([\h, \h])$.  Then \[
		\dim H^3(G/H) = \dim \NC_{\g,\h} \geq m - (k + p).
		\]
\end{corollary}

\begin{remark}\label{remark-LW} In view of notions introduced in this section
	we now make some remarks on certain results from \cite{LW}. In \cite{LW}, it is assumed that $G$ is semisimple and that $H$ is connected. Throughout this remark, we shall adopt these assumptions.
	
	\begin{enumerate}
		\item In \cite[Definition 4.7]{LW}, the notion of an ``aligned" homogeneous space is introduced via three conditions. However, only conditions (i) and (ii) are essential, as their fulfilment is equivalent to satisfying all three conditions (i), (ii), and (iii). This equivalence can be readily established by appropriately scaling the $s$-tuple $(c_1, \dots, c_s)$ appearing in \cite[Definition 4.7]{LW} by a positive real number, if necessary.
		
		\item In view of  Theorem \ref{first-reduction-theorem},
		in computing the third (and fourth) Betti number  of $G/H$ 
		only the connected normal closure of $H$ in $G$ matters; see Definition \ref{smallest connected normal subgroup}. 
		If $E$ is the connected normal closure of $H$ in $G$ then using Theorem  \ref{first-reduction-theorem} and Theorem \ref{thm-3rd-coh-general}
		it is easy to see that the third Betti number of $G/H$ is
		$\# \g -1$ if and only if the third Betti number of $E/H$ is
		$\# \e -1$, which is equivalent to ${\rm Rank } 
		A_{\g, H} = 1= {\rm Rank } 
		A_{\e, H}$.  Furthermore,  from Remark \ref{remark-first-reduction}
		it is also immediate that, in this case, $E/H$ is ``aligned" in the
		in the sense of \cite[Definition 4.7]{LW}.  This is 
		exactly the equivalence of (i) and (iii) of \cite[Proposition 4.10]{LW}.  Thus ``alignment" of
		$G/H$ is the same as  ${\rm Rank } 
		A_{\g, H} = 1$ and that $G$ is the connected normal closure of $H$.
		
\item An ambiguity arises in \cite[Proposition 4.10]{LW} concerning the implication $\text{(ii)} \Rightarrow \text{(i)}$. Specifically, if $G$ and $K$ satisfy condition (ii), it is not explicitly stated what the authors mean by $G'$ and $H$.

 However, based on the proof of \cite[Proposition 4.10]{LW}, it appears that $G'$ denotes the  connected normal closure of $K$, while $H$ represents the connected normal complement of $G'$; see Definition \ref{smallest connected normal subgroup}.
		Under this interpretation, the implication $\text{(ii)} \Rightarrow \text{(i)}$ does not hold unconditionally but rather up to a finite covering (see the following remark). A counterexample is provided as follows.
		
		Let $A$ be a compact, simple, simply connected Lie group with a nontrivial center $Z(A)$; for example, one may assume, $A := \text{SU}_2$. Let $\Gamma := \{(x,x) \mid x \in Z(A)\}$, $G := (A \times A) / \Gamma$, and 
$p : A \times A \to G$ be the associated quotient homomorphism. Consider the embeddings  defined by :
		\[
		i_1: A \to G, \quad a \mapsto (a,e)\Gamma, \quad \text{and} \quad i_2: A \to G, \quad a \mapsto (e,a)\Gamma.
		\]
		Let $T$ be a maximal torus of $A$. We set $K := i_1(T)$. Since $Z(A) \subset T$, it follows that $p^{-1} (i_1(T)) = T \times Z(A)$. Thus
		\[\quad \frac{G}{K} = \frac{G}{i_1(A)} \simeq \frac{A \times A}{ p^{-1} (i_1(T)) } \simeq \frac{A \times A}{T \times Z(A)} \simeq (A/T) \times (A/Z(A)).
		\]
		Moreover, we have $\dim H^3(G/K) = 1$. It is easy to see that
the smallest closed connected normal subgroup of $G$ containing $K$ is $i_1(A)$, and the connected closed normal complement of $i_1(A)$ is $i_2(A)$. Now we have
the following observations on the fundamental groups :  $\pi_1(i_1(A)/K) = 1$, $\pi_1(i_2(A)) = \pi_1(A) = 1$, but $\pi_1(G/K) = \pi_1(A/T) \times \pi_1(A/Z(A)) \simeq Z(A) \neq 1$.
Thus $G/K$ is not homotopy equivalent to $(i_1(A)/K) \times i_2(A)$, and in particular, the spaces are not homeomorphic.
		
		\item In view of the ambiguity pointed out above,
		we next indicate  a way to modify (i) of \cite[Proposition 4.10]{LW}.  We follow the notation of 
		\cite[Proposition 4.10]{LW}.  Let $G'$
		be the connected normal closure of $K$ in $G$, and that $H$ is its connected normal complement; see Definition \ref{smallest connected normal subgroup}.
		 Then 
		using (2) above, 
		\ref{first reduction covering map},  one concludes that $b_3(G/K) = \#\g - 1$  if and only if 
		$b_3(G'/K) = \#\g' - 1$.  This, in turn, is equivalent to asserting  that
		there exists a covering map from $H \times (G'/K)$ to $G/K$, where the fibers correspond to orbits of the finite central subgroup $G' \cap H$ acting on $H \times (G'/K)$ (see \ref{first reduction covering map}). This map is not a homeomorphism unless $G' \cap H$ is trivial. The ``alignment" of $G'/K$ 	is obvious.    \qed
	\end{enumerate}
\end{remark}

\subsection{Computations on some special settings}\label{computation in special setting}
Now we apply our main results, namely, Theorem \ref{thm-3rd-coh-general}, and Theorem \ref{thm-4th-coh-general} in various
special cases of homogeneous spaces which occur naturally and record some interesting consequences.

\begin{corollary}\label{cor-h<gg}
	Let $G$ be a compact connected Lie group, and $H$  a closed subgroup of  $G$.
	\begin{enumerate}
		\item \label{cor-h<gg1}   Assume that $H$ is a subgroup of  $ (G,\,G) $, the commutator subgroup of $ G $. Then 
		\begin{align*}
			H^3(G/H )\, \simeq & \big({\z}(\h) ^*\big)^{H/H^0} \ox \z(\g)^*     \bigoplus \NC_{\g,\h}\,\bigoplus  \, \wedge^3 {\z(\g)}^* \,, \vspace*{0.05cm} \\  
			H^4(G/H )\,\, \simeq & \,\,  \big({\z}(\h)^*\big)^{H/H^0} \ox\wedge^2 \z(\g)^* \,\bigoplus   \, \,(\NC_{\g,\h})^{\dim  \z(\g)} \,\bigoplus \, \CC_{\g,H} \,\bigoplus  \,	\wedge^4  \z(\g)^* \, .
		\end{align*}
		\vspace*{0.01cm}
		\item Recall that $ Z(H) $ denotes the center of the  Lie group $ H $.  Let $r_0\,:=\,\dim   \big(\frac{\g}{[\g,\,\g]+\h}\big)  $.  Assume that  $ Z(H)\cap (G,G) $ is a finite group. 
		Then we have the following isomorphisms:
		\begin{align*}
			H^3(G/H )\, \simeq &   \, \,\NC_{\g,\h}\,\bigoplus  \,    \wedge^3{\Big(\frac{\g}{[\g,\,\g]+\h}\Big)}^*       \,, \\
			H^4(G/H )\,\, \simeq &  \,(\NC_{\g,\h})^{r_0}	 \, \bigoplus \, \CC_{\g,H}
			\,\bigoplus  \,	\wedge^4{\Big(\frac{\g}{[\g,\,\g]+\h}\Big)}^*  .
		\end{align*}
		\vspace*{0.01cm}
		\item\label{cor-dim-zg<1}
		Assume that $\dim   \z(\g) \,\leq\, 1$.
		Then we have the following isomorphisms: 
		\begin{align*}
			H^3(G/H )\,\,& \simeq\,\,   \big(({\z}(\h) \cap [\g,\,\g])^*\big)^{H/H^0} \ox {\Big(\frac{\g}{[\g,\,\g]+\h}\Big)}^* 
			\bigoplus \NC_{\g,\h}\,,\\
			H^4(G/H )\,\,& \simeq\,\,   
			\begin{cases}                          
				\CC_{\g,H}   &  {\rm if }\,\, {[\g,\,\g]+\h}=\g  \\
				\NC_{\g,\h} \,  \bigoplus\, \CC_{\g,H}  &  {\rm if }\,\, {[\g,\,\g]+\h}\,\subsetneqq\, \g\,.
			\end{cases}
		\end{align*}
	\end{enumerate}
\end{corollary}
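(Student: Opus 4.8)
The plan is to obtain all three statements as direct specializations of Theorem \ref{thm-3rd-coh-general} and Theorem \ref{thm-4th-coh-general}: in each case I only need to identify how the hypothesis forces the ``generic'' summands appearing in the two master formulas either to simplify or to vanish. Parts (1) and (3) are purely formal substitutions once a handful of subspaces have been identified, while part (2) requires one genuinely new observation of a Lie-theoretic nature. Throughout I would keep in mind the notation $\a\,=\,\z(\h)\cap[\g,\,\g]$, the decomposition $\g\,=\,\z(\g)\oplus[\g,\,\g]$, and $r_0\,=\,\dim_\R\big(\frac{\g}{[\g,\,\g]+\h}\big)$.

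For part (1), I would first record that $H\subseteq (G,G)$ yields $\h\subseteq[\g,\,\g]$. Consequently $[\g,\,\g]+\h\,=\,[\g,\,\g]$, so that $\frac{\g}{[\g,\,\g]+\h}\,=\,\frac{\g}{[\g,\,\g]}\,\cong\,\z(\g)$ via $\g\,=\,\z(\g)\oplus[\g,\,\g]$; in particular $r_0\,=\,\dim_\R\z(\g)$, which explains the exponent in the $(\NC_{\g,\h})^{\dim_\R\z(\g)}$ summand. Furthermore $\z(\h)\subseteq\h\subseteq[\g,\,\g]$ forces $\z(\h)\cap[\g,\,\g]\,=\,\z(\h)$, hence $\big((\z(\h)\cap[\g,\,\g])^*\big)^{H/H^0}\,=\,(\z(\h)^*)^{H/H^0}$. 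Substituting these three identifications into the statements of the two main theorems produces the displayed isomorphisms verbatim.

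The \emph{main obstacle} is part (2), where I must show that the summand $\big((\z(\h)\cap[\g,\,\g])^*\big)^{H/H^0}$ vanishes exactly under the hypothesis that $Z(H)\cap(G,G)$ is finite. The key step is to identify ${\rm Lie}(Z(H))$: for $X\in\h$ one has $\exp(tX)\in Z(H)$ for all $t$ if and only if $\exp(t\,{\rm Ad}(h)X)\,=\,\exp(tX)$ for every $h\in H$ and all $t$, which (differentiating at $t=0$, and conversely) is equivalent to ${\rm Ad}(h)X\,=\,X$ for all $h\in H$. Thus ${\rm Lie}(Z(H))\,=\,\h^H\,=\,\z(\h)^H$, the last equality being the fact noted just before \eqref{h decomposition refined}. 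Since $G$ is compact, $Z(H)\cap(G,G)$ is a compact subgroup, so it is finite precisely when its Lie algebra ${\rm Lie}(Z(H))\cap[\g,\,\g]\,=\,\z(\h)^H\cap[\g,\,\g]\,=\,\a^H$ is zero. Because $H^0$ acts trivially on $\z(\h)$, we have $\big((\z(\h)\cap[\g,\,\g])^*\big)^{H/H^0}\,\cong\,(\a^H)^*$ by \eqref{h decomposition component2}, so this term vanishes exactly when $\a^H\,=\,0$. Feeding this vanishing into the two main theorems, while leaving $\big(\frac{\g}{[\g,\,\g]+\h}\big)^*$ untouched, gives the two claimed isomorphisms.

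For part (3), I would observe that $\frac{\g}{[\g,\,\g]+\h}$ is a quotient of $\frac{\g}{[\g,\,\g]}\cong\z(\g)$, so $\dim_\R\frac{\g}{[\g,\,\g]+\h}\,\leq\,\dim_\R\z(\g)\,\leq\,1$. Hence $\wedge^k\big(\frac{\g}{[\g,\,\g]+\h}\big)^*\,=\,0$ for every $k\geq 2$, which deletes the $\wedge^3$ summand from $H^3$ and both the $\wedge^2$ and $\wedge^4$ summands from $H^4$; the remaining terms in $H^3$ give the stated formula with no case distinction. For $H^4$ the only surviving ambiguous term is $(\NC_{\g,\h})^{r_0}$, and $r_0\,=\,\dim_\R\frac{\g}{[\g,\,\g]+\h}$ equals $0$ exactly when $[\g,\,\g]+\h\,=\,\g$ and equals $1$ otherwise (being at most $1$). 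This yields $H^4\simeq\CC_{\g,H}$ in the first case and $H^4\simeq\NC_{\g,\h}\oplus\CC_{\g,H}$ in the second, completing the proof.
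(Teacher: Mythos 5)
Your proposal is correct and follows exactly the route the paper intends: the paper omits the proof of this corollary, describing it as a straightforward specialization of Theorem \ref{thm-3rd-coh-general} and Theorem \ref{thm-4th-coh-general}, which is precisely what you carry out. Your part (2) supplies the one detail the paper leaves implicit, namely that ${\rm Lie}(Z(H))=\h^H=\z(\h)^H$ and that compactness of $Z(H)\cap(G,G)$ makes its finiteness equivalent to $\a^H=0$, hence to the vanishing of $\big((\z(\h)\cap[\g,\g])^*\big)^{H/H^0}$ --- this is the right argument and is consistent with the decompositions \eqref{h decomposition component2} and Proposition \ref{representation-proposition} used elsewhere in the paper.
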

\begin{remark}
	In this remark, we will indicate a natural situation where
	Corollary \ref{cor-h<gg}(\ref{cor-dim-zg<1}) can be applied.
	Let $L$ be a connected simple Lie group which is not necessarily compact,  and $M$  a closed
	subgroup with finitely many connected components.  Let 
	$G$ be a maximal compact subgroup of $L$ such that $H:= G \cap M$ is a maximal compact subgroup of $M$.  Then  $H^i (L/M, \R) \simeq H^i ( G/H,  \R)$ for all $i$; see \eqref{homotopy}. 
	It is well-known that  maximal compact subgroups of a simple real Lie group need not be semisimple, but the center of a maximal compact subgroup is of dimension at the most one; see \cite[Proposition 6.2, p. 382]{He}.  Thus 
	$\dim   \z(\g) \leq 1$,  and hence to calculate $H^i (L/M, \R),  \, i = 3,4$ we may invoke Corollary \ref{cor-h<gg}(\ref{cor-dim-zg<1}). \qed
\end{remark}

\begin{proposition}\label{prop-dimNgh}
Let $ G $ be a compact, connected Lie group, $ H\subset G$ a closed subgroup.  Let $r \,:=\, \#[\g,\, \g]$, and 
let $ [\g,\,\g]\,=\, \g_1\oplus \cdots \oplus \g_r $, where $ \g_i $'s are simple ideals in $ [\g,\,\g] $.
\begin{enumerate}
\item If $ \h \cap [\g,\,\g]  \,\neq\, 0 $,
then  $ \dim  \NC_{\g, \h}\, \leq \,  \#[\g,\,\g]-1$.
Furthermore,  if $H^0$ is simple then 
$ \dim  \NC_{\g, \h}\, = \,  \#[\g,\,\g]-1$,  and $ \dim  \CC_{\g,H}\,=\, 0 $.
\vspace*{0.2cm}

\item If $ k\,:=\, \#\{i \,\mid \,  \h\cap \g_i  \,\neq\, 0 \} $, then $ \dim  \NC_{\g, \h}  \,\leq \, \#[\g,\, \g]-k$.

\vspace*{0.2cm}
		
\item\label{prop-4.4-4} If $ \h \cap [\g,\,\g] \, \subset \,  \g_1\oplus \cdots \oplus \g_s $ for some $ s\,\leq\, \#[\g,\, \g] $, then 
$\dim  \NC_{\g, \h}  \,\geq \, \#[\g,\, \g]-s$.

\vspace*{0.2cm}

\item\label{prop-4.4-3} 

If $ \h \cap [\g,\,\g] \, \subset \,  \g_1\oplus \cdots \oplus \g_s$, and   $ \h\cap \g_i \,\neq\, 0$ for all $ i\,=\,1,\,\ldots ,\,s $, then
\begin{enumerate}
\item   $ \dim  \NC_{\g, \h}  \,= \, \#[\g, \,\g]-s$. 
 
\item  $ \dim  \Im \Psi_{\g,\h} \,=\, s$. In particular, $ s \,\leq\, \dim S^2((\h\cap [\g,\,\g]) ^*)^H$.  
\end{enumerate}	
\end{enumerate}
\end{proposition}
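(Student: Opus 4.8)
The plan is to reduce the entire statement to a rank computation for the single linear map $\Psi_{\g,\h}$, exploiting the explicit basis of its domain. Since $G$ is connected, Lemma \ref{lem-decomposition-of-S(h)-new}\eqref{cor-basis-Bi} shows that $\{\widetilde{\Bi}_1,\dots,\widetilde{\Bi}_r\}$ is a basis of $S^2([\g,\,\g]^*)^G$, so the domain of $\Psi_{\g,\h}$ has dimension $r=\#[\g,\,\g]$ and $\NC_{\g,\h}=\ker\Psi_{\g,\h}$. The key computational observation I would record first is that, for $X=\sum_j X_j$ with $X_j\in\g_j$, one has $\widetilde{\Bi}_i(X,\,X)=\Bi_i(X_i,\,X_i)$ (see \eqref{definition-B-tilde}); in particular, if $X\in\g_i$ then $\widetilde{\Bi}_j(X,\,X)=\delta_{ij}\Bi_i(X,\,X)$, and since $\g_i$ is compact simple its Killing form $\Bi_i$ is negative definite, so $\Bi_i(X,\,X)<0$ whenever $X\ne 0$.

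Parts (3) and (1) then give the lower and upper bounds on $\dim_\R\NC_{\g,\h}$. For (3), if $\h\cap[\g,\,\g]\subset\g_1\oplus\cdots\oplus\g_s$ then for every $i>s$ the form $\widetilde{\Bi}_i$ vanishes on $(\h\cap[\g,\,\g])\times(\h\cap[\g,\,\g])$, because elements of $\h\cap[\g,\,\g]$ have zero $\g_i$-component; hence the $r-s$ independent vectors $\widetilde{\Bi}_{s+1},\dots,\widetilde{\Bi}_r$ lie in $\ker\Psi_{\g,\h}$, giving $\dim_\R\NC_{\g,\h}\ge r-s$. For the first assertion of (1), if $\h\cap[\g,\,\g]\ne 0$ I pick a nonzero $X$ in it with nonzero $\g_i$-component $X_i$; then $\widetilde{\Bi}_i(X,\,X)=\Bi_i(X_i,\,X_i)<0$, so $\Psi_{\g,\h}(\widetilde{\Bi}_i)\ne 0$ and the rank is at least one, whence $\dim_\R\NC_{\g,\h}\le r-1$.

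For (2) I would prove that the relevant restrictions are linearly independent by a diagonal evaluation. Writing $I=\{i\mid\h\cap\g_i\ne 0\}$ with $|I|=k$ and choosing $0\ne X_i\in\h\cap\g_i$ for each $i\in I$, the computation above gives $\widetilde{\Bi}_j(X_i,\,X_i)=\delta_{ij}\Bi_i(X_i,\,X_i)$ with nonzero diagonal entries; hence the matrix $\big(\Psi_{\g,\h}(\widetilde{\Bi}_j)(X_i,\,X_i)\big)_{i,j\in I}$ is diagonal and invertible, so $\{\Psi_{\g,\h}(\widetilde{\Bi}_j)\}_{j\in I}$ are linearly independent, the rank is at least $k$, and $\dim_\R\NC_{\g,\h}\le r-k$. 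Part (4a) then follows by combining (2) and (3): the hypotheses force $\h\cap\g_i=0$ for $i>s$, since $\g_i\cap(\g_1\oplus\cdots\oplus\g_s)=0$, so $k=s$ and the two bounds coincide to give $\dim_\R\NC_{\g,\h}=r-s$. Part (4b) is immediate from rank-nullity, namely $\dim_\R\Im\Psi_{\g,\h}=r-(r-s)=s$, together with the inclusion $\Im\Psi_{\g,\h}\subseteq S^2((\h\cap[\g,\,\g])^*)^H$ built into the definition of $\Psi_{\g,\h}$.

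The delicate point, and the part I would handle most carefully, is the refinement of (1) when $H^0$ is simple, which asserts both $\dim_\R\NC_{\g,\h}=r-1$ and $\CC_{\g,H}=0$. Here $\h$ is simple, hence $\z(\h)=0$ and $\h\subset[\g,\,\g]$, so $\h\cap[\g,\,\g]=\h$ and the target of $\Psi_{\g,\h}$ is $S^2(\h^*)^H$. Since $\h$ is compact simple it is absolutely simple, so by Schur's lemma $S^2(\h^*)^{H^0}$ is one-dimensional, spanned by the Killing form of $\h$; as this form is $H$-invariant, $\dim_\R S^2(\h^*)^H=1$. It then remains to see that $\Psi_{\g,\h}$ is onto: each nonzero projection $\pi_i|_\h\colon\h\to\g_i$ is injective because $\h$ is simple, so $\Psi_{\g,\h}(\widetilde{\Bi}_i)$ is the pullback of the Killing form of $\g_i$, a nonzero $H$-invariant form on $\h$ and therefore a nonzero multiple of the Killing form of $\h$. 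Thus the image is the whole one-dimensional target, which forces $\CC_{\g,H}=0$ and rank one, that is, $\dim_\R\NC_{\g,\h}=r-1$. The main obstacle is precisely pinning down this one-dimensionality and surjectivity, that is, identifying every nonzero restriction $\Psi_{\g,\h}(\widetilde{\Bi}_i)$ as a multiple of a single Killing form, rather than the elementary bookkeeping in (2), (3) and (4).
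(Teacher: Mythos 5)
Your proposal is correct and follows essentially the same route as the paper: both identify $\NC_{\g,\h}=\ker\Psi_{\g,\h}$ via the basis $\{\widetilde{\Bi}_i\}$ from Lemma \ref{lem-decomposition-of-S(h)-new}, use negative-definiteness of the Killing forms on the compact simple factors together with diagonal evaluations for the bounds in (1)--(3), and obtain (4) by combining (2), (3) and rank--nullity. Your handling of the $H^0$-simple case (Schur's lemma giving $\dim_\R S^2(\h^*)^H=1$, then surjectivity of $\Psi_{\g,\h}$) is also the paper's argument, differing only cosmetically in that you witness non-vanishing by pulling back a single $\widetilde{\Bi}_i$ along the injective projection $\pi_i|_\h$, whereas the paper evaluates the Killing form of $[\g,\,\g]$ at a nonzero vector of $\h$.
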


\begin{proof}
{\it Proof of (1).~} 
The proof follows from the fact that the map $ \Psi_{\g, \h} $ as in \eqref{map-Psi-g-h} is  a non-zero map and the Killing form
on $[\g,\,\g] $ is negative-definite. 

To prove the second part of (1) first observe that,  as $\h$ is simple and $ \h \cap [\g,\,\g]\,\neq\, 0 $,\,  $[\g, \,\g] \,\neq\, 0$.
Further, as $H^0$ is simple, by Lemma \ref{lem-decomposition-of-S(h)-new} (4),  $\dim S^2 (\h^*)^H \,=\,1$.  Let $B$ be the Killing from
of $[\g, \,\g]$.   Let $0 \neq X \,\in\, \h $.
As $B$ is negative definite, $B (X,\, X) \,\neq\, 0$.   This forces $\Psi_{\g, \h}$ to be non-zero and hence a surjective map. Thus 
$ \dim  \NC_{\g, \h}\, = \,  \#[\g,\,\g]-1$,  and $ \dim  \CC_{\g,H}\,=\, 0 $.

{\it Proof of (2).~} 
Recall that $ \{ \widetilde{\Bi} _i \,\mid\, 1\,\leq \,i \,\leq\, \#[\g,\,\g] \} $ form a basis of $ S^2 ([\g,\, \g]^*)^G$ where $ \widetilde{\Bi}_i $ is as in \eqref{definition-B-tilde}; see Lemma \ref{lem-decomposition-of-S(h)-new}\eqref{cor-basis-Bi}.   
Let $\g_{i_l},\,  l\,=\, 1,\,  \dots,\, k$ be distinct ideals of $[\g,\,  \g]$ such that $\h\cap \g_{i_l}  \,\neq\, 0$.
We choose  $0\,\neq\, X_{i_l} \,\in\, \h\cap \g_{i_l} $ for $ 1\,\leq\, l \,\leq\, k$. Since $ {\Bi}_{i_l} $ is negative-definite
on $ \g_{i_l} $ we have that $\widetilde{{\Bi}} _{i_l}(X_{i_l},   X_{i_l})  \,
=\,{\Bi}_{i_l}(X_{i_l}, \,  X_{i_l}) \,\neq \, 0$. Thus $ \dim  {\rm ker }\Psi_{\g, \h} \,\leq\, r-k$. 

{\it Proof of (3).}
Since  $ \h \cap [\g,\,\g] \, \subset\,   \g_1\oplus \cdots \oplus \g_s  $ for $s\leq r $,   we have 
$\widetilde{{\Bi}} _{j}|_{(\h \cap [\g,\,\g]) \times (\h \cap [\g,\,\g])}\, =\,0$ for $ j\,=\,s+1,\, \ldots , \,r$.
Now the first conclusion follows from Lemma \ref{lem-decomposition-of-S(h)-new}\eqref{cor-basis-Bi}. 

{\it Proof of (4).}
 The statements in (4) follows from (2), (3) and Definition \ref{def-NC-gH}.
\end{proof}

\begin{example}
The $n$-dimensional sphere $ \mathbb{S}^n $ is diffeomorphic to the homogeneous space $ {\rm SO}(n+1) /{\rm SO}(n)$. Recall that $ {\rm SO}(3) $, $ {\rm SO}(n) $ for $ n\geq 5 $ are simple, where as $ {\rm SO}(4) $ is not simple ($ \s\o(4) \simeq \s\u(2) \oplus \s\u(2) $). Thus using Corollary \ref{cor-h<gg}\eqref{cor-h<gg1} and Proposition \ref{prop-dimNgh}, it follows that for $ i=3,4 $, $ \dim H^i(\mathbb{S}^n)= 1$ if $ i=n $, and  $ \dim H^i(\mathbb{S}^n)= 0$ if $ i\neq n $.
\qed
\end{example}

{\bf Proof of Corollary \ref{cor-g-semisimple}.}
The proofs of \eqref{cor1.3-1}   follows from  Theorem \ref{thm-3rd-coh-general}, and Proposition \ref{prop-dimNgh}(1). 

The first part of \eqref{cor1.3-2}   follows from   Theorem \ref{thm-4th-coh-general}. Second part follows using Lemma \ref{lem-decomposition-of-S(h)-new}(4) and the fact that $ \Psi_{\g,\h} $  as in \eqref{map-Psi-g-h} is a non-zero map. \eqref{cor1.3-2a} follows from above as $ \h $ is semisimple. 
For \eqref{cor1.3-2b}, use the fact that $\dim {\Im} \Psi_{\g,\h} =1$, and $ \dim S^2(\h^*)^H=\#(\h,H) $. For \eqref{cor1.3-2c}, use Proposition \ref{prop-dimNgh}(1). \qed

{\bf Proof of Theorem \ref{thm-invariant}.}
The proofs of \eqref{cor1.3-3} and  \eqref{cor1.3-5} follow  from Theorem \ref{thm-3rd-coh-general}, Theorem \ref{thm-4th-coh-general}  and Lemma \ref{lem-decomposition-of-S(h)-new}.  The proof of \eqref{cor1.3-4} follows from \eqref{cor1.3-3} and Lemma \ref{lem-decomposition-of-S(h)-new}\eqref{ref-lem-4}.
\qed

{\bf Proof of Corollary \ref{cor-homotopy-inv}.}	
Proof of (1) follows from Theorem \ref{thm-invariant} \eqref{cor1.3-4}.  

We now prove (2).
Let $n : = \dim T $.  Since $H$ is a semisimple subgroup of $G$ it follows that $\#\h	\leq {\rm rank}\, H \leq {\rm rank}\, G
\leq n$.  
We will divide the proof in two parts. Fist we assume that $n>1$. 
In this case  using Theorem  \ref{thm-invariant} \eqref{cor1.3-4}, \eqref{cor1.3-5} and the fact that 
$\dfrac{n(n+1)}{2} >n$ for $n>1$, it follows that $\dim  H^4(G/H) -  \dim  H^3(G/H) \,\neq \, \dim  H^4(G/T) - \dim  H^3(G/T)$.  Thus $G/H$ and $G/T$ can not be homotopically equivalent.

We next consider the case when rank of $G$ is $1$.  The condition on the rank forces $G$ to be a simple Lie group, and hence  $\dim  H^2(G/T )=1 $; see \cite[Theorem 3.3]{BCM}. On the other hand  rank of $H$ has to be $1$, and hence $H$ is simple. Thus $\dim  H^2(G/H )=0 $. Thus, again,  $G/H$ and $G/T$ can not be homotopically equivalent. This completes the proof.
\qed

Now we include a particular  case where  one can say the dimension of the  third cohomology of $G/H$.
\begin{corollary}\label{cor-g-se}
	Let $G$ be a compact semisimple Lie group and $H$ be a closed subgroup of $G$  which is not necessarily connected and $\dim H\,>\,0$.  
	Moreover assume that  $\g_1,\, \cdots,\, \g_s$ are distinct simple ideals of $\g$ such that	$\h \,\subset\, \g_1 \oplus \cdots \oplus \g_s$ and that $ \h \cap \g_i \,\neq\, 0$ for all $i \,=\, 1,\, \cdots,\, s$.   Then	 $\dim  H^3(G/H  )	\,=\, \# \g - s$.  \vspace{0.1cm}		
\end{corollary}			

\begin{proof}
	The proof follows from Theorem \ref{thm-3rd-coh-general}, and Proposition \ref{prop-dimNgh} \eqref{prop-4.4-3}.
\end{proof}

In the next result, we specialize to the situation where $H^0$ is assumed to be semisimple.
\begin{corollary}\label{cor-h-gg34}
Let $ G $ be a compact, connected Lie group $ H\subset G$ be a closed  subgroup  such that $H^0$ is a semisimple Lie group.    
Let $ l\,:=\,\dim  \z(\g) $, and $ r\,:=\,\#[\g,\,\g] $. Let $ [\g,\,\g]\,=\, \g_1 \oplus \cdots \oplus \g_r $, where $ \g_i $'s are simple ideals in $ [\g,\,\g] $.  
Moreover assume that  $ \h \cap [\g,\,\g] \, \subset \,  \g_1\oplus \cdots \oplus \g_s  $ for some $ s\,\leq\, \#[\g,\, \g] $ and  that $ \h\cap \g_i \neq 0$ for all $ i=1,\ldots ,s $. Then 
\begin{enumerate}
		\item $\dim  (H^3(G/H,\, \R)) \,=\,r -s  + \begin{pmatrix}
			l\\
			3
		\end{pmatrix} $,
		\item $ \dim  (H^4(G/H, \,\R)) \,=\, l(r-s) \,+\,  \#(\h,H) -s \,+\,  
		      \begin{pmatrix}
			l\\
			4
		\end{pmatrix} $.
	\end{enumerate}
\end{corollary}

\begin{proof}
Using  Lemma \ref{lem-decomposition-of-S(h)-new}, it follows that 
$ \dim  S^2([\g,\,\g]^*)^G\,=\, r  $ and $ \dim  S^2(\h^*)^H\,=\,\#(\h,\,H)$. In view of Proposition \ref{prop-dimNgh}\eqref{prop-4.4-3},  we have $ \dim  \NC_{\g, \h}= r-s $. Thus $ \dim  \CC_{\g,H}= \#(\h,H) -s$. 
Now proof of the results follows from Theorems \ref{thm-3rd-coh-general}, \ref{thm-4th-coh-general}.
\end{proof}

If $G$ is a general compact connected Lie group and $H$ is a closed subgroup, then the finite group $H/H^0$ has a natural right action on $G/H^0$ turning it into a $H/H^0$ principal bundle over $G/H$.  Thus one has the
following inequality of the Betti numbers in all dimensions:
$ \dim   H^i (G/H , \R) = \dim  H^i (G/H^0 , \R)^{H/H^0} \leq \dim  H^i (G/H^0 , \R)$ for all $i$.  
One may observe, in view of Theorem \ref{thm-3rd-coh-general} and Theorem \ref{thm-4th-coh-general}, that the equality does not hold
even in dimensions three and four; see Example \ref{example} below. 
\begin{example}\label{example}
Let $G\,:=\,{\rm U}(2) \times \mathbb{S}^1$, and $H\,:=\, {\rm O}(2)\times 1 \, \subset\, G$. Then $ \z(\h)
\,=\,\s\o(2)\subset [\g,\,\g]$, and ${\rm O}(2)/{\rm SO}(2)\,=\,\Z/2\Z$ acts non-trivially on $\s\o(2)$. In this case $\dim \NC_{\g, \h}
\,=\, 0\,=\,\dim \CC_{\g,H}$. Using Theorem \ref{thm-3rd-coh-general} and Theorem \ref{thm-4th-coh-general}, we conclude that
$\dim H^3(G/H, \,\R) \,=\,0\, \neq \,    \dim H^3(G/H^0, \,\R) \,=\,2,$ and  $\dim H^4(G/H,\, \R) \,=\,\,0\, \neq \,
\dim H^4(G/H^0,\, \R) \,=\,1.$\qed
\end{example}

\begin{remark}
In this remark, we will illustrate a  natural situation where Corollary \ref{cor-h-gg34}   can be applied.  Let $ G $ be as in Corollary \ref{cor-h-gg34}.  Let $ H\subset G$ be a closed subgroup.  Suppose there is a $s$ with $s \leq r$ such that  $\h$ can be decomposed as $\h = \h_1 \oplus \cdots \oplus \h_s$ where $\h_i,  \, i = 1,  \dots, s$ are semisimple (not necessarily simple) ideals of $\h$ with $0 \neq \h_i \subset \g_i$ for all $ i = 1,  \dots,s$.  Then one may apply Corollary \ref{cor-h-gg34} to obtain the third and fourth Betti numbers of $G/H$.\qed
\end{remark}

The next result follows immediately either  from Theorems \ref{thm-3rd-coh-general},  \ref{thm-4th-coh-general} or  Corollary \ref{cor-h-gg34}.
\begin{corollary}
Let $ G $ be a compact, connected Lie group, $ H\subset G$ a closed subgroup so that $H^0$ is a  simple Lie group.   Let $ l:= 	\dim  \z(\g) $, and $ r\,:=\,\#[\g,\,\g] $. Then
	\begin{enumerate}
		\item $\dim  (H^3(G/H,\, \R)) \,=\,r -1  + \begin{pmatrix}
			l\\
			3
		\end{pmatrix} $,
		\item $ \dim  (H^4(G/H, \,\R)) \,=\, l (r-1) \,+\,  \begin{pmatrix}
			l\\
			4
		\end{pmatrix} $.
	\end{enumerate}
\end{corollary}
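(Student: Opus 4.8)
The plan is to specialize Theorem \ref{thm-3rd-coh-general} and Theorem \ref{thm-4th-coh-general} to the present situation, using Proposition \ref{prop-dimNgh}(1) to evaluate the two nontrivial summands $\NC_{\g,\h}$ and $\CC_{\g,H}$. First I would extract the structural consequences of the simplicity of $H^0$. Since $H^0$ is simple, the Lie algebra $\h$ is simple; in particular $\z(\h)=0$ and $\h=[\h,\h]$. Because every bracket of elements of $\h$ lies in $[\g,\,\g]$, semisimplicity forces $\h=[\h,\h]\subseteq[\g,\,\g]$. Consequently $\z(\h)\cap[\g,\,\g]=0$, while $\h\cap[\g,\,\g]=\h\neq 0$ and $[\g,\,\g]+\h=[\g,\,\g]$. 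As $\g$ is reductive, $\g=\z(\g)\oplus[\g,\,\g]$, so $\g/([\g,\,\g]+\h)=\g/[\g,\,\g]\simeq\z(\g)$ has dimension $l$; hence the quantity $r_0:=\dim_\R\big(\g/([\g,\,\g]+\h)\big)$ of Theorem \ref{thm-4th-coh-general} equals $l$.

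Next I would evaluate the two subtle summands. Since $\h\cap[\g,\,\g]\neq 0$ and $H^0$ is simple, Proposition \ref{prop-dimNgh}(1) yields $\dim_\R\NC_{\g,\h}=r-1$ and $\dim_\R\CC_{\g,H}=0$. This is precisely the input that realizes the present statement as the simple-factor specialization of Corollary \ref{cor-h-gg34}, with the quantities $s$ and $\#(\h,H)$ both equal to $1$ (the latter because a simple $\h$ has $\h$ as its unique nonzero, necessarily $H$-invariant, ideal). I would emphasize that routing through Proposition \ref{prop-dimNgh}(1) is what makes the argument uniform: it does not require $\h$ to sit inside a single simple ideal $\g_i$, a containment that can fail, for instance for a diagonally embedded $\h$, and which is tacitly assumed in the hypotheses of Corollary \ref{cor-h-gg34}.

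Finally I would substitute into the two main theorems. In Theorem \ref{thm-3rd-coh-general} the first summand $\big((\z(\h)\cap[\g,\,\g])^*\big)^{H/H^0}\otimes(\g/([\g,\,\g]+\h))^*$ vanishes because $\z(\h)\cap[\g,\,\g]=0$, the middle summand $\NC_{\g,\h}$ contributes $r-1$, and the last summand is $\wedge^3\z(\g)^*$ of dimension $\binom{l}{3}$, giving $\dim_\R H^3(G/H,\,\R)=r-1+\binom{l}{3}$. Likewise, in Theorem \ref{thm-4th-coh-general} the first summand vanishes for the same reason, $(\NC_{\g,\h})^{r_0}=(\NC_{\g,\h})^{l}$ contributes $l(r-1)$, the summand $\CC_{\g,H}$ contributes $0$, and $\wedge^4\z(\g)^*$ contributes $\binom{l}{4}$, giving $\dim_\R H^4(G/H,\,\R)=l(r-1)+\binom{l}{4}$.

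The only step that is not pure bookkeeping is the evaluation $\dim_\R\NC_{\g,\h}=r-1$ and $\dim_\R\CC_{\g,H}=0$; once Proposition \ref{prop-dimNgh}(1) supplies these, the remainder reduces to computing dimensions of the exterior powers $\wedge^3\z(\g)^*$ and $\wedge^4\z(\g)^*$ and to observing the collapse of the $\z(\h)\cap[\g,\,\g]$ summand. I therefore expect no genuine obstacle beyond correctly matching the summands of the two main theorems against the vanishing and dimension facts established in the first two paragraphs.
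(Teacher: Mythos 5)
Your proof is correct, but it does not follow the paper's route. The paper disposes of this corollary in one line, as an immediate specialization of Corollary \ref{cor-h-gg34} (taking $s=1$ and $\#(\h,H)=1$), whereas you specialize Theorems \ref{thm-3rd-coh-general} and \ref{thm-4th-coh-general} directly, using Proposition \ref{prop-dimNgh}(1) to evaluate $\dim_\R\NC_{\g,\h}=r-1$ and $\dim_\R\CC_{\g,H}=0$. The difference is not merely cosmetic, and your remark about diagonal embeddings identifies a genuine weakness in the paper's deduction: Corollary \ref{cor-h-gg34} carries the hypothesis that $\h\cap[\g,\,\g]\subset\g_1\oplus\cdots\oplus\g_s$ with $\h\cap\g_i\neq0$ for all $i\le s$, and for simple $\h$ this forces $\h$ to lie inside a single simple ideal (if $\h\cap\g_i\neq 0$ then $\h\cap\g_i$ is a nonzero ideal of the simple algebra $\h$, so $\h\subset\g_i$). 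That hypothesis fails for a diagonally embedded $\h$, e.g. $\h=\{(X,X)\}\subset\s\u(2)\oplus\s\u(2)$, where every $\h\cap\g_i=0$; yet the stated corollary makes no such assumption, and indeed the formula still holds there (the quotient is $\mathbb{S}^3$). Proposition \ref{prop-dimNgh}(1) needs only that $H^0$ is simple and $\h\cap[\g,\,\g]=\h\neq0$, which is automatic, so your argument covers all cases of the statement, including those the paper's one-line reduction does not literally reach. What the paper's route buys is brevity, since Corollary \ref{cor-h-gg34} had already packaged the bookkeeping; what yours buys is completeness and a self-contained verification of each summand in the two main theorems.
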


{\bf Proof of Corollary \ref{dim-7}.}
Observe that $G/ H^0$ is compact orientable and  $\dim  G/ H^0 \,=\, 7$.  Using Poincar\'{e} duality, we have $\dim  H^3 (G/H^0) \,=\, \dim   H^4 (G/H^0)$. Now we apply Corollary \ref{cor-g-semisimple}(4) to conclude that $\#\g \,=\, \# \h$.
\qed 

\begin{remark}
Let $G$ be a compact connected Lie group and $ H $ be a closed subgroup of $G$ so that $G/H$ is orientable.
Assume further  that $\dim   \,(G/H)\,\leq \, 9$.  Then using  \cite[Theorem 3.6,  Theorem 3.3]{BCM}, Theorem \ref{thm-3rd-coh-general}, Theorem \ref{thm-4th-coh-general}, and  Poincar\'{e} duality one may compute dimensions of all the cohomologies of $G/H$.
\qed
\end{remark}

\subsection{Homogeneous spaces which are not necessarily compact}\label{appl-thm-noncompact}
In this section, we draw some consequences in the setting of homogeneous spaces of Lie groups which are not necessarily compact. We need a result due to Mostow using which computation of the cohomologies of  a general connected homogeneous space  boils down to that of certain compact homogeneous (sub)space.

\begin{theorem}[\cite{Mo}]\label{mostow} 
	Let $L$ be a connected Lie group, and let $M \,\subset\, L$ be a closed subgroup with finitely many connected  components. Let $G$ be a maximal compact subgroup of $L$ such that $G \cap M$ is a maximal compact subgroup of $M$. 
	Then the image of the natural embedding $G/ (G \cap M)\, \hookrightarrow\, L/M$ is a deformation retraction of 
	$L/M$.
\end{theorem}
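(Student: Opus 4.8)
The plan is to deduce the statement from the structure theory of maximal compact subgroups together with a ``relative Cartan decomposition'' of the homogeneous space. First I would invoke the Cartan--Iwasawa--Malcev theorem together with Mostow's self-adjoint realization to fix compatible Cartan decompositions: realize $L$ as a self-adjoint closed subgroup of some $\mathrm{GL}(n,\R)$ in such a way that $M$ is again self-adjoint. Then $G = L \cap \mathrm{O}(n)$ is maximal compact in $L$, and writing the Cartan decomposition $\l = \g \oplus \p$ of $\l := \mathrm{Lie}(L)$ (here $\g = \mathrm{Lie}(G)$ is the $+1$-eigenspace and $\p$ the $-1$-eigenspace of the involution $\theta$ attached to transpose-inverse), the polar map $G \times \p \to L$, $(k,X)\mapsto k\exp X$, is a diffeomorphism. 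The same structure applies to $M$: one gets $M = (G\cap M)\exp(\p_\m)$ with $\p_\m := \m \cap \p$ and $G\cap M$ maximal compact in $M$, and the finiteness of $\pi_0(M)$ guarantees $M/(G\cap M)\cong \p_\m \cong \R^j$. The existence of a $G$ for which $G\cap M$ is maximal compact in $M$ is the easy half: a maximal compact subgroup $K$ of $M$ is a compact subgroup of $L$, hence lies in some maximal compact $G$; since $G\cap M$ is then a compact subgroup of $M$ containing $K$, we get $G\cap M = K$.

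Next I would establish the relative Cartan decomposition of $L/M$. Using the $G$-invariant inner product coming from the trace form, fix an $\mathrm{Ad}(G\cap M)$-invariant complement $\q$ with $\p = \p_\m \oplus \q$. The key claim is that
$$ G \times_{G\cap M} \q \; \longrightarrow \; L/M, \qquad [k,Y] \longmapsto k\exp(Y)\, M, $$
is a diffeomorphism. Well-definedness on the associated bundle is immediate from the identity $\exp(Y)g = g\exp(\mathrm{Ad}(g^{-1})Y)$ for $g\in G\cap M$, and a dimension count already gives $\dim(G\times_{G\cap M}\q) = \dim L - \dim M = \dim(L/M)$. Granting bijectivity, this exhibits $L/M$ as the total space of a fibre bundle over the compact base $G/(G\cap M)$ with fibres diffeomorphic to $\q \cong \R^j$, and its zero section $[k,0]\mapsto kM$ is exactly the image of the embedding $G/(G\cap M)\hookrightarrow L/M$ appearing in the statement.

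Finally, the deformation retraction is produced by fibrewise linear scaling. Define $r_t[k,Y] := [k,(1-t)Y]$. Since scaling commutes with $\mathrm{Ad}(G\cap M)$, this descends to a homotopy $L/M \times [0,1]\to L/M$ with $r_0 = \mathrm{id}$, with $r_1$ the projection onto the zero section, and with every $r_t$ fixing the zero section pointwise. Transported through the diffeomorphism above, this is precisely a strong deformation retraction of $L/M$ onto the image of $G/(G\cap M)$, which is the assertion.

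The hard part is the relative Cartan decomposition, equivalently the compatible self-adjoint realization of the pair $(L,M)$; once this is in hand the bundle and scaling arguments are formal. Proving that the polar map is globally bijective (not merely a local diffeomorphism) is where the finiteness of $\pi_0(M)$ and the interaction of $\exp$ with the non-reductive (solvable radical) directions genuinely enter, and this is the Mostow-theoretic core. An alternative, more geometric route to the same conclusion would endow the symmetric space $X = L/G$ with its $L$-invariant nonpositively curved metric, observe that $C := M\cdot o \cong M/(G\cap M)$ is a closed convex totally geodesic subset, and transport the $M$-equivariant geodesic contraction of $X$ onto $C$ (nearest-point projection) to $L/M$; but this again rests on the same compatible decomposition in order to identify $C$ and to realize the normal exponential as an equivariant product, so the essential difficulty is unchanged.
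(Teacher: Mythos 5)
Your proposal has a genuine gap, and it occurs at the very first step. The theorem is asserted for an \emph{arbitrary} connected Lie group $L$ and an \emph{arbitrary} closed subgroup $M$ with finitely many components, whereas the compatible self-adjoint realization you start from can exist only when both $L$ and $M$ are reductive (and $L$ is linear): a closed connected self-adjoint subgroup of ${\rm GL}(n,\R)$ is automatically reductive, so no conjugation makes a group with nontrivial unipotent radical self-adjoint, and $L$ need not even admit a faithful linear representation (e.g.\ $\widetilde{{\rm SL}_2(\R)}$, or take $L$ nilpotent). The restriction on $M$ is just as serious even when $L$ is reductive: take $L={\rm SL}_2(\R)$ and $M=N$ the upper-triangular unipotent subgroup. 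Then $M$ is closed and connected, $G={\rm SO}(2)$, and $G\cap M=\{e\}$ is maximal compact in $M$; the theorem correctly predicts that $L/N\,\cong\,\R^2\setminus\{0\}$ retracts onto the circle $G/\{e\}$, but no conjugate of $N$ is self-adjoint, and $\m\cap\p=0$ while $M\neq\{e\}$, so your polar decomposition $M=(G\cap M)\exp(\m\cap\p)$ fails outright. Your alternative geometric route collapses on the same example: the orbit $N\cdot o$ in the hyperbolic plane is a horocycle, which is neither convex nor totally geodesic, so there is no equivariant nearest-point retraction of the kind you describe. In short, the case your argument covers (linear reductive $L$ with a $\theta$-stable reductive $M$, where the Mostow decomposition $L=G\exp(\q)\exp(\p_\m)$ does give the fibration and the fibrewise scaling retraction) is the reductive core, and the actual content of Mostow's theorem --- the solvable radical, non-linear groups, and non-reductive subgroups --- is exactly what is missing; moreover, even in the reductive case you defer the global bijectivity of the polar map, which is itself a nontrivial theorem rather than a claim one can "grant."

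For comparison: the paper does not reprove this statement at all. It cites Mostow [Mo, Theorem 3.1, p.~260], where the result is proved for connected $M$ by structure-theoretic arguments (not by a global polar decomposition), together with Hochschild [H, Theorem 3.1, p.~180] to extend to $M$ with finitely many connected components. So a correct write-up along your lines would have to either restrict the statement to reductive pairs, or supply a reduction of the general case to that one --- and no such easy reduction exists; that reduction is the heart of [Mo].
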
 

Theorem \ref{mostow} is proved in \cite[p. 260, Theorem 3.1]{Mo} under the assumption that $M$ is connected. However, as mentioned in \cite{BC}, using \cite[p. 180, Theorem 3.1]{H}, the proof as in \cite{Mo} goes through when $H$ has finitely many connected components.

Let $L,\, M,G\,$ be as in Theorem \ref{mostow}, and let $H \,:= \,G \cap M$. As
$\ G/H \, \hookrightarrow \, L/M$ is a deformation retraction by Theorem \ref{mostow}, we have 
\begin{equation}\label{homotopy}
	{ H}^{i}(L/M ,\, \R) \,\simeq\, { H}^{i}(G/H ,\, \R ) ~\ \ \text{ for all } \ i\, . 
\end{equation}

In view of Theorem \ref{mostow} we next recast Theorem \ref{thm-3rd-coh-general} and Theorem \ref{thm-4th-coh-general} in the setting of a general 
connected homogeneous space which is not necessarily compact.

\begin{theorem}\label{thm-3rd-coh-general-non-cpt}
Let $L$ be a connected Lie group and $M\,\subset\, L$ a closed subgroup with finitely many connected components. Let $G$ be a maximal compact subgroup of $ L $, and $ H $  a maximal compact subgroup of $ M $ with $H\,\subset\, G$. Then there is an isomorphism: 
	\begin{align*}
		H^3(L/M )\, \simeq\,  \big(({\z}(\h) \cap [\g,\,\g])^*\big)^{H/H^0} \ox {\Big(\frac{\g}{[\g,\,\g]+\h}\Big)}^*
		\,\bigoplus \, \NC_{\g,\h}  \, \bigoplus  \, \wedge^3 {\Big(\frac{\g}{[\g,\,\g]+\h}\Big)}^*\,.
	\end{align*}
\end{theorem}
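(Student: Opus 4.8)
The plan is to deduce this statement directly from the already-established compact case, Theorem \ref{thm-3rd-coh-general}, by transporting the cohomology computation along Mostow's deformation retraction. The essential observation is that all the algebraic data appearing on the right-hand side---the center $\z(\h)$, the commutator $[\g,\,\g]$, the quotient $\g/([\g,\,\g]+\h)$, the component group $H/H^0$, and the space $\NC_{\g,\h}$---are defined purely in terms of the Lie algebras $\g$ and $\h$ of the maximal compact subgroups $G$ and $H$. Hence once the cohomology of $L/M$ is identified with that of $G/H$, the compact theorem supplies verbatim the claimed expression, and no new geometric or algebraic input is needed.

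First I would invoke Theorem \ref{mostow}: since $M$ has finitely many connected components and $H$ is a maximal compact subgroup of $M$ sitting inside the maximal compact subgroup $G$ of $L$, the natural embedding $G/H \hookrightarrow L/M$ is a deformation retraction. This yields the isomorphism \eqref{homotopy}, namely $H^i(L/M,\,\R) \simeq H^i(G/H,\,\R)$ for all $i$, and in particular for $i=3$. Next I would verify that the pair $(G,\,H)$ meets the hypotheses of Theorem \ref{thm-3rd-coh-general}: as a maximal compact subgroup of the connected Lie group $L$, the group $G$ is itself compact and connected, while $H \subset G$ is a closed subgroup which need not be connected---precisely the generality permitted in Theorem \ref{thm-3rd-coh-general}. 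Applying that theorem to $(G,\,H)$ then produces the stated decomposition of $H^3(G/H,\,\R)$, and composing with \eqref{homotopy} finishes the argument.

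There is no serious obstacle here; the mathematical content is entirely contained in the compact case together with Mostow's theorem. The only points requiring attention are bookkeeping ones: confirming the connectedness of the maximal compact subgroup $G$ so that Theorem \ref{thm-3rd-coh-general} genuinely applies, and noting that the possible disconnectedness of $M$, and hence of $H$, is harmless because the component-group action recorded by the invariants $(\cdot)^{H/H^0}$ is already built into the compact statement. With these observations in place the asserted isomorphism follows immediately.
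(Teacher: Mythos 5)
Your proposal is correct and follows exactly the route the paper takes: Theorem \ref{mostow} gives the deformation retraction $G/H \hookrightarrow L/M$, hence the isomorphism \eqref{homotopy}, and then Theorem \ref{thm-3rd-coh-general} applied to the compact pair $(G,\,H)$ (with $G$ connected and $H$ possibly disconnected, as that theorem allows) yields the stated decomposition. The paper itself records no further argument, so there is nothing missing in your treatment.
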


\begin{theorem}\label{thm-4th-coh-general-non-cpt}
	Let $L,\, M, \,G$ and $H$ be as in Theorem \ref{thm-3rd-coh-general-non-cpt}.
	Let $r_0\,:=\,\dim   \big(\frac{\g}{[\g,\,\g]+\h}\big)$. Then there
	is an isomorphism: 
	\begin{align*}
		H^4(L/M )\,\ \simeq\,\   &  
		\big(({\z}(\h) \cap [\g,\,\g])^*\big)^{H/H^0} \ox\wedge^2 {\Big(\frac{\g}{[\g,\,\g]+\h}\Big)}^* \,
		\bigoplus   \, \,
		(\NC_{\g,\h})^{r_0}  \\
		&\,  \,   \bigoplus\,\,  \CC_{\g,H} 	\,\,\bigoplus  \,\,\wedge^4 {\Big(\frac{\g}{[\g,\,\g]+\h}\Big)}^*\, .
	\end{align*}
\end{theorem}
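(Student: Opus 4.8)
The plan is to deduce this statement directly from the compact computation, Theorem \ref{thm-4th-coh-general}, by transporting it along the deformation retraction supplied by Mostow's theorem. First I would invoke Theorem \ref{mostow}: since $G$ is a maximal compact subgroup of the connected Lie group $L$ and $H\,=\,G\cap M$ is a maximal compact subgroup of $M$, the natural embedding $G/H\,\hookrightarrow\, L/M$ is a deformation retraction. Hence the induced map on de Rham cohomology is an isomorphism in every degree; in particular this gives the degree-four instance of \eqref{homotopy}, namely $H^4(L/M,\,\R)\,\simeq\, H^4(G/H,\,\R)$. This step carries all the topological content, and it reduces the problem to a purely compact computation that has already been carried out.

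Next I would verify that the pair $(G,\,H)$ falls within the scope of Theorem \ref{thm-4th-coh-general}. A maximal compact subgroup of a connected Lie group is itself connected, so $G$ is a compact connected Lie group. The subgroup $H$, being compact, is automatically closed in $G$ and has finitely many connected components. Thus $(G,\,H)$ is exactly a pair of the type handled in Theorem \ref{thm-4th-coh-general}, with $\g\,=\,{\rm Lie}(G)$ and $\h\,=\,{\rm Lie}(H)$. It is worth emphasizing that these are the very Lie algebras the symbols $\g,\,\h$ denote in the statement to be proved, so the Lie-theoretic data feeding the right-hand side is not altered by the maximal-compact reduction.

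Finally I would apply Theorem \ref{thm-4th-coh-general} to $H^4(G/H,\,\R)$, obtaining the decomposition into the four summands $\big(({\z}(\h)\cap[\g,\,\g])^*\big)^{H/H^0}\ox\wedge^2\big(\frac{\g}{[\g,\,\g]+\h}\big)^*$, $(\NC_{\g,\h})^{r_0}$, $\CC_{\g,H}$, and $\wedge^4\big(\frac{\g}{[\g,\,\g]+\h}\big)^*$, with $r_0\,=\,\dim_\R\big(\frac{\g}{[\g,\,\g]+\h}\big)$. Composing this with the isomorphism $H^4(L/M,\,\R)\,\simeq\, H^4(G/H,\,\R)$ from the first step yields the claim. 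I do not expect any genuine obstacle here: the result is a formal corollary, since Mostow's theorem absorbs the non-compactness and Theorem \ref{thm-4th-coh-general} supplies the explicit answer. The only point demanding a line of care is the observation just made, that every quantity appearing on the right-hand side ($\z(\h)$, $[\g,\,\g]$, $\NC_{\g,\h}$, $\CC_{\g,H}$, and the quotient $\frac{\g}{[\g,\,\g]+\h}$) is intrinsically defined from $\g$ and $\h$, so passing from $L/M$ to the compact model $G/H$ leaves the formula formally unchanged.
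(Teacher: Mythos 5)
Your proposal is correct and is exactly the paper's argument: the paper derives this theorem by invoking Mostow's theorem (Theorem \ref{mostow}) to get the deformation retraction $G/H \hookrightarrow L/M$, hence the isomorphism \eqref{homotopy} in degree four, and then applying the compact-case result, Theorem \ref{thm-4th-coh-general}, to the pair $(G,H)$. Your added care in checking that $G$ is connected, that $H$ is closed with finitely many components, and that the Lie-theoretic data on the right-hand side is unchanged matches the paper's implicit reasoning.
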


We will now restrict our attention to certain special types of (non-compact) semisimple Lie groups
to obtain better results.  
We first define a notion associated to a real simple Lie group which will facilitate formulating the next set of corollaries. In addition to 
the foregoing fact regarding the dimension of the center of any maximal compact subgroup, we also need to recall that in a simple Lie group the 
semisimple part of a maximal compact subgroup is not necessarily simple. However, the number of simple factors of the semisimple part of a maximal compact subgroup in a simple Lie group is at the most two; see \cite[Appendix C]{Kn}.

\begin{definition}\label{def-type-I-type-II}
Let $L$ be a (connected) simple Lie group.  
Then {\it $L$ is said to be of Type-I}  if a maximal compact subgroup of $L$ is a simple Lie group, and {\it $L$ is said to be of  Type-II} if 	a maximal compact subgroup of $L$ is a product of two simple Lie groups. See \S \ref{appendix} for a complete list of all the Type-I and Type-II simple Lie groups.
\qed
\end{definition}

\begin{corollary}\label{cor-g-semisimple-noncpt-new}
	Let $L$ be a semisimple group,  and $M$ be 
	a semisimple subgroup such that all the simple
	factors of both $L$ and $M$ are either
	of Type-I or of Type-II.  Assume that 
		$L$ admits exactly $r_1$-many simple factors of Type-I and $s_1$-many simple factors of Type-II. 
	\begin{enumerate}
		\item 
		Then 
		$\dim  H^3(L/M ) \,\leq\, 	 (r_1+2s_1)$.
		\item  Moreover, if  $ M $ admits exactly  $r_2$-many  simple factors of Type-I and   $s_2$-many simple factors of Type-II, then 
		$\dim  H^4(L/M ) \,\leq\, ( r_1+r_2) + 2(s_1+s_2)-1$.
	\end{enumerate}
\end{corollary}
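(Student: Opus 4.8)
The plan is to descend to the compact setting via Mostow's theorem and then apply Corollary \ref{cor-g-semisimple}. First I would invoke Theorem \ref{mostow}: fixing a maximal compact subgroup $G$ of $L$ and a maximal compact subgroup $H$ of $M$ with $H\subset G$, the isomorphism \eqref{homotopy} gives $H^i(L/M,\R)\simeq H^i(G/H,\R)$ for every $i$, so it is enough to bound $\dim_\R H^3(G/H,\R)$ and $\dim_\R H^4(G/H,\R)$.

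The decisive preliminary is to pin down the structure of $\g$ and $\h$. Writing $\l$ as the direct sum of its simple ideals, a maximal compact subalgebra of $\l$ is the direct sum of maximal compact subalgebras of the simple factors of $L$. By hypothesis each simple factor of $L$ is of Type-I or Type-II, so by Definition \ref{def-type-I-type-II} its maximal compact subalgebra is either simple or a direct sum of two simple algebras. Consequently $\g$ is semisimple, and counting factors gives $\#\g = r_1 + 2 s_1$. The same argument applied to $M$ shows that $\h$ is semisimple with $\#\h = r_2 + 2 s_2$; moreover $H$, being the maximal compact subgroup of the connected group $M$, is itself connected, so $H^0 = H$ is semisimple. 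Since $M$ is a non-trivial semisimple subgroup we have $\dim H > 0$, and hence $(G,H)$ meets the hypotheses of Corollary \ref{cor-g-semisimple}.

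For the third Betti number, Corollary \ref{cor-g-semisimple}\eqref{cor1.3-1} gives $H^3(G/H,\R)\simeq \NC_{\g,\h}$ together with $\dim_\R H^3(G/H,\R)\leq \#\g - 1 = r_1 + 2 s_1 - 1$, which is $\leq r_1 + 2 s_1$; transporting through \eqref{homotopy} proves $(1)$. For the fourth Betti number, $H^0$ is semisimple, so Corollary \ref{cor-g-semisimple}\eqref{cor1.3-2a} gives $\dim_\R H^4(G/H,\R)\leq \#(\h,H) - 1$. As $H$ is connected, $\#(\h,H) = \#\h = r_2 + 2 s_2$, and therefore
\[
\dim_\R H^4(L/M,\R)\,\leq\, r_2 + 2 s_2 - 1 \,\leq\, (r_1 + r_2) + 2(s_1 + s_2) - 1,
\]
the last step using $r_1 + 2 s_1 \geq 0$. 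This proves $(2)$ (in fact it yields the sharper bound $r_2 + 2 s_2 - 1$).

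The only genuinely delicate step is the structural reduction identifying $\g$ and $\h$ as semisimple algebras with $r_1 + 2 s_1$ and $r_2 + 2 s_2$ simple factors respectively. Passing from the abstract semisimple groups $L$ and $M$ to the maximal compact subalgebras of their simple factors requires care about how a maximal compact subgroup of a semisimple group is related to those of its simple factors; this is precisely the matter clarified in Section \ref{appendix}, which I would cite at this point. Everything else, namely Mostow's reduction and the two applications of Corollary \ref{cor-g-semisimple}, is routine bookkeeping.
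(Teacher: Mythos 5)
Your overall plan (Mostow reduction to $G/H$, then the compact-case results) is indeed the paper's strategy, but there is a genuine gap at precisely the step you flag as delicate: the claim that $\g$ and $\h$ are semisimple. It is false in general that a maximal compact subgroup of a semisimple Lie group all of whose simple factors are of Type-I or Type-II is itself semisimple; this is exactly the subtlety that Section \ref{appendix} is written to expose, not to dispose of. By Proposition \ref{prop-max-cpt}, if the simple factors $L_1,\dots,L_r$ of $L$ intersect in an infinite central subgroup, then \emph{every} maximal compact subgroup of $L$ has a one-dimensional center, even though each $L_i$ separately has a semisimple maximal compact subgroup. The Remark following Corollary \ref{cor-max-cpt} gives a concrete instance: $L=\widetilde{{\rm SU}(p,q)}\times\widetilde{{\rm SU}(p,q)}/\Gamma$ with $\Gamma$ a diagonally embedded infinite central subgroup has both simple factors of Type-II, yet no semisimple maximal compact subgroup. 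Note also that Corollary \ref{cor-max-cpt} takes semisimplicity of a maximal compact subgroup of $L$ as a \emph{hypothesis}; it does not deliver it. Consequently you are not entitled to invoke Corollary \ref{cor-g-semisimple} (which requires $G$ semisimple), and your claimed sharper bounds ($\#\g-1=r_1+2s_1-1$ for $H^3$, and $r_2+2s_2-1$ for $H^4$) are exactly what can fail when $\z(\g)\neq 0$: in that case the extra summands $\big((\z(\h)\cap[\g,\,\g])^*\big)^{H/H^0}\ox\big(\g/([\g,\,\g]+\h)\big)^*$ and $(\NC_{\g,\h})^{r_0}$ in Theorems \ref{thm-3rd-coh-general} and \ref{thm-4th-coh-general} can be nonzero.

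The repair, which is the paper's actual route, is to use only what Proposition \ref{prop-max-cpt} guarantees: $\dim_\R\z(\g)\le 1$ and $\dim_\R\z(\h)\le 1$, together with $\#[\g,\,\g]=r_1+2s_1$ and $\#[\h,\,\h]=r_2+2s_2$. One then applies Corollary \ref{cor-h<gg}\eqref{cor-dim-zg<1} (the $\dim_\R\z(\g)\le 1$ case of the main theorems) in place of Corollary \ref{cor-g-semisimple}, bounding $\dim_\R\NC_{\g,\h}\le r_1+2s_1-1$ by Proposition \ref{prop-dimNgh}(1), and $\dim_\R\CC_{\g,H}\le r_2+2s_2$ via Lemma \ref{lem-decomposition-of-S(h)-new}(4), which gives $\dim_\R S^2\big((\h\cap[\g,\,\g])^*\big)^H\le 1+\#[\h,\,\h]$, combined with the fact that $\Psi_{\g,\h}\neq 0$. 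This yields $\dim_\R H^3(L/M,\R)\le 1+\dim_\R\NC_{\g,\h}\le r_1+2s_1$ and $\dim_\R H^4(L/M,\R)\le \dim_\R\NC_{\g,\h}+\dim_\R\CC_{\g,H}\le (r_1+r_2)+2(s_1+s_2)-1$, which are the stated bounds; the "$+1$" slack in (1) that you regarded as wasteful is there precisely to absorb the possible center.
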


\begin{proof} 
Let $H$ be a maximal compact subgroup of $M$, and $G$ be a maximal compact subgroup of $L$  	containing $H$. 	It is clear from Proposition \ref{prop-max-cpt} that  	$\dim \z(\g) \leq 1, \dim \z(\h) \leq 1$.  Moreover,  we have that  $\# [\g, \,\g]
\,=\, r_1 + 2s_1$, and $\# [\h,\, \h] \,=\, r_2 + 2s_2$.  We now use Proposition 	\ref{prop-dimNgh} (1) to conclude that  	$\dim \NC_{\g,\h} \leq r_1+ 2s_1 -1$, and Lemma \ref{lem-decomposition-of-S(h)-new} (4) to conclude that $ \dim S^2((\h\cap [\g,\,\g])^* )^H\,\leq \, 1+ \#[\h,\h] $. Thus  	$\dim \CC_{\g,H} \leq r_2+ 2s_2$; see Definition \ref{def-NC-gH}.  	We now use Theorem \ref{mostow} and Corollary \ref{cor-h<gg} \eqref{cor-dim-zg<1} to see that 
\begin{align*}
&\dim H^3(L/M ) \leq 1 + \dim \NC_{\g,\h}
\leq r + 2s\,, \, {\rm and }\\
\dim  H^4  &(L/M ) \,\leq\,    \dim \NC_{\g,\h}   \,+\,\dim \CC_{\g,H}   \,\leq\, r_1+r_2+2(s_1+s_2)-1\,.	
\end{align*} 
\end{proof}

We will now derive vanishing results of the third and fourth
 cohomologies of homogeneous space
in certain non-compact setting. In view of Theorem \ref{mostow}, the next result follows either from \cite{A} or from Corollary
\ref{cor-g-semisimple}\eqref{cor1.3-2b}.

\begin{corollary}
Let $L$ be a simple Lie group of Type-I. Let $ M$ be a closed subgroup of $ L $ with finitely many components. Assume that $ M $ contains a compact subgroup of positive dimension. Then $ H^3(L/M,\R) =0$.
\end{corollary}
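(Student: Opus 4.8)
The plan is to reduce the statement to the associated compact homogeneous space via Mostow's theorem and then to quote Corollary \ref{cor-g-semisimple}\eqref{cor1.3-1b}. Since $L$ is connected and $M$ has finitely many connected components, Theorem \ref{mostow} lets me choose a maximal compact subgroup $G$ of $L$ so that $H \,:=\, G\cap M$ is a maximal compact subgroup of $M$. The retraction statement in Theorem \ref{mostow} then gives, via \eqref{homotopy}, the isomorphism $H^3(L/M,\,\R)\,\simeq\, H^3(G/H,\,\R)$. Hence it suffices to show $H^3(G/H,\,\R)\,=\,0$ for this compact pair $(G,\,H)$.

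Next I would check the two hypotheses of Corollary \ref{cor-g-semisimple}\eqref{cor1.3-1b}, namely that $G$ is compact simple and that $\dim_\R H\,>\,0$. The first is immediate from the definition of Type-I: by Definition \ref{def-type-I-type-II}, a maximal compact subgroup of a Type-I simple Lie group $L$ is itself a simple Lie group, and a maximal compact subgroup of a connected Lie group is connected. Thus $G$ is a compact (connected) simple Lie group, in particular compact semisimple, as required.

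The only point that needs care, and therefore the main (albeit mild) obstacle, is converting the hypothesis ``$M$ contains a compact subgroup of positive dimension'' into the statement $\dim_\R H\,>\,0$ for the \emph{particular} maximal compact subgroup $H\,=\,G\cap M$. For this I would invoke the structure theory of groups with finitely many components: every compact subgroup of $M$ is contained in some maximal compact subgroup, and all maximal compact subgroups of $M$ are mutually conjugate in $M$. Consequently, if $K_0\,\subset\, M$ is compact with $\dim_\R K_0\,>\,0$, then $\dim_\R H\,=\,\dim_\R(\text{any maximal compact subgroup of }M)\,\geq\,\dim_\R K_0\,>\,0$. With both hypotheses verified, Corollary \ref{cor-g-semisimple}\eqref{cor1.3-1b} applied to the compact simple group $G$ and the positive-dimensional closed subgroup $H$ gives $H^3(G/H,\,\R)\,=\,0$, and hence $H^3(L/M,\,\R)\,=\,0$ by the isomorphism above. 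The argument is purely structural and requires no computation, so beyond correctly setting up the compact reduction there is nothing further to grind through.
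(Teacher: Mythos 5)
Your proposal is correct and takes essentially the same route as the paper, which likewise reduces to the compact pair $(G,\,H)$ via Theorem \ref{mostow} and \eqref{homotopy} and then invokes Corollary \ref{cor-g-semisimple}\eqref{cor1.3-1b}. Your explicit verifications that $G$ is compact simple (from Definition \ref{def-type-I-type-II}) and that $\dim_\R H\,>\,0$ (via conjugacy of maximal compact subgroups of $M$) merely spell out steps the paper leaves implicit.
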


In the next corollary we generalize the foregoing one.

\begin{corollary}	Let $ L $ be a  semisimple Lie group, and $ M\subset L $ be a closed  semisimple   subgroup.  
Assume that both  $L$ and $ M$  admit  semisimple  maximal compact subgroups. 
Let $ L_1, \dots ,L_{r} $ be the simple factors of $ L $, and each $ L_i $ is of Type-I. 	Suppose that $ L_i \cap M $ contains a compact subgroup of positive dimension for each $ i=1,\dots ,r $. Then 
	$$ H^3(L/M,\R) \, =\,  0 \,.$$  
\end{corollary}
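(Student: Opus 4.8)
The plan is to pass, via Mostow's theorem, from $L/M$ to an associated compact homogeneous space, and then to apply Corollary \ref{cor-g-semisimple}\eqref{cor1.3-1a}. I would begin by fixing a Cartan involution $\theta$ of $\l\,:=\,{\rm Lie}(L)$ that preserves each simple ideal $\l_i\,:=\,{\rm Lie}(L_i)$, and setting $\g\,:=\,\l^\theta$. Then $\g\,=\,\g_1\oplus\cdots\oplus\g_r$ with $\g_i\,:=\,\l_i^\theta$ a maximal compactly embedded subalgebra of $\l_i$; since each $L_i$ is of Type-I, every $\g_i$ is simple, so $\z(\g)\,=\,0$ and $\#\g\,=\,r$. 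The corresponding maximal compact subgroup $G\,=\,L^\theta$ is then a compact semisimple Lie group. After conjugating $M$ inside $L$---which alters neither the diffeomorphism type of $L/M$ nor the hypotheses, as each $\l_i$ is an ideal and hence ${\rm Int}(L)$-stable---I may assume that $\m\,:=\,{\rm Lie}(M)$ is $\theta$-stable, so that $H\,:=\,G\cap M$ is a maximal compact subgroup of $M$ with $\h\,=\,\m\cap\g$. By Theorem \ref{mostow} and \eqref{homotopy}, this gives $H^3(L/M,\,\R)\,\simeq\,H^3(G/H,\,\R)$.

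The crux is to prove that $\h\cap\g_i\,\neq\,0$ for every $i\,=\,1,\dots,r$. Since $\l_i$ and $\m$ are both $\theta$-stable, so is $\l_i\cap\m$, which therefore admits the decomposition
$$\l_i\cap\m\,=\,(\g_i\cap\m)\,\oplus\,(\p_i\cap\m),$$
where $\p_i\,:=\,\l_i\cap\p$ is the $(-1)$-eigenspace of $\theta$ on $\l_i$ and $\g_i\cap\m\,=\,\h\cap\g_i$. By hypothesis $L_i\cap M$ contains a compact subgroup $K_i$ of positive dimension, so $\k_i\,:=\,{\rm Lie}(K_i)$ is a nonzero subalgebra contained in $\l_i\cap\m$. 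Suppose, for contradiction, that $\h\cap\g_i\,=\,\g_i\cap\m$ were zero; then $\l_i\cap\m\,\subseteq\,\p_i\,\subseteq\,\p$, so $\k_i\,\subseteq\,\p$. But any nonzero $X\,\in\,\p$ acts on $\l$ by a nonzero self-adjoint operator ${\rm ad}\,X$ (with respect to the inner product $-B(\cdot,\,\theta\,\cdot)$ determined by the Killing form $B$ of $\l$), hence has a nonzero real eigenvalue; consequently ${\rm Ad}(\exp tX)$ is unbounded in $t$, so the one-parameter subgroup $\exp(\R X)$ cannot lie in the compact group $K_i$. This contradicts $X\,\in\,\k_i$, forcing $\k_i\,=\,0$, which is absurd. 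Hence $\h\cap\g_i\,\neq\,0$ for all $i$.

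It remains to assemble the pieces. The subalgebras $\g_1,\dots,\g_r$ are exactly the distinct simple ideals of $\g$, the inclusion $\h\,\subseteq\,\g\,=\,\g_1\oplus\cdots\oplus\g_r$ is automatic, and $\h\cap\g_i\,\neq\,0$ for all $i$; in particular $\dim H\,>\,0$. Applying Corollary \ref{cor-g-semisimple}\eqref{cor1.3-1a} with $s\,=\,r$ then gives
$$\dim_\R H^3(G/H,\,\R)\,=\,\#\g-r\,=\,r-r\,=\,0,$$
and therefore $H^3(L/M,\,\R)\,\simeq\,H^3(G/H,\,\R)\,=\,0$.

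The main obstacle is the middle paragraph: converting the group-theoretic hypothesis that $L_i\cap M$ carries a positive-dimensional compact subgroup into the Lie-algebraic fact $\h\cap\g_i\,\neq\,0$. This hinges on choosing $\theta$ compatibly with the simple-factor decomposition and with $\m$, and on the principle that the noncompact part $\p$ of the Cartan decomposition contains no nonzero compact subalgebra.
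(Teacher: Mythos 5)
Your high-level plan (reduce to a compact pair via Theorem \ref{mostow}, then apply Corollary \ref{cor-g-semisimple}\eqref{cor1.3-1a}), and the principle in your middle paragraph that $\p$ contains no nonzero subalgebra tangent to a compact subgroup, are both sound. The genuine gap is in your setup: you identify a maximal compact subgroup of $L$ with the fixed-point group $L^\theta$ of a Cartan involution, and the Lie algebra $\g_i$ of a maximal compact subgroup of $L_i$ with $\l_i^\theta$. Under the hypotheses of this corollary that identification can fail, because being of Type-I does \emph{not} force $Z(L_i)$ to be finite: by the paper's Appendix \ref{appendix}, for $\g_i \cong \s\p(n,\R)$, $\s\o^*(2n)$, $\s\o(n,2)$ ($n>2$), ${\rm E\,III}$, ${\rm E\,VII}$, the group is of Type-I \emph{precisely when} $|Z(L_i)|=\infty$. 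Concretely, $L=\widetilde{{\rm Sp}(n,\R)}$ satisfies every hypothesis (it is simple of Type-I, and its maximal compact subgroups, isomorphic to ${\rm SU}(n)$, are semisimple, as is $M$ taken to be one of them), yet $L^\theta\cong \R\times {\rm SU}(n)$ is noncompact, $\l^\theta\cong\R\oplus\s\u(n)$ is not semisimple, and the Lie algebra of an actual maximal compact subgroup is $[\l^\theta,\l^\theta]\subsetneq\l^\theta$. In this situation your claims ``$G=L^\theta$ is a compact semisimple Lie group,'' ``each $\g_i=\l_i^\theta$ is simple,'' and ``$H:=G\cap M$ is a maximal compact subgroup of $M$'' all break down, and your contradiction argument only yields $\l_i^\theta\cap\m\neq 0$; a priori that intersection could lie in the one-dimensional center of $\l_i^\theta$, so it does not give $\h\cap\g_i\neq 0$ for the Lie algebra $\g_i=[\l_i^\theta,\l_i^\theta]$ of a genuine maximal compact subgroup, which is what Theorem \ref{mostow} and Corollary \ref{cor-g-semisimple}\eqref{cor1.3-1a} require. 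This compact-versus-$\theta$-fixed distinction is exactly the subtlety the paper's Appendix \ref{appendix} (Proposition \ref{prop-max-cpt}, Corollary \ref{cor-max-cpt}) was written to handle; your proof is correct only in the case where all relevant centers are finite, which is not granted.

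For comparison, the paper avoids Cartan involutions entirely and thereby sidesteps the issue: it sets $C:=C_1\cdots C_r$ with $C_i\subset L_i\cap M$ compact of positive dimension, chooses a maximal compact subgroup $H$ of $M$ containing $C$ and then a maximal compact subgroup $G$ of $L$ containing $H$, invokes Corollary \ref{cor-max-cpt} (whose hypothesis is precisely the assumed semisimplicity of maximal compact subgroups of $L$) to write $G=G_1\cdots G_r$ with each $G_i$ simple, and then a one-line direct-sum computation (if $X\in\c_i\subset\l_i$ and $X=\sum_j X_j$ with $X_j\in\g_j\subset\l_j$, then $X=X_i$) gives $\c_i\subset\g_i\cap\h$, hence $\h\cap\g_i\neq 0$; the conclusion follows from Corollary \ref{cor-h-gg34}. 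Note also that your reduction ``after conjugating $M$, assume $\m$ is $\theta$-stable'' relies on Mostow's theorem on the existence of compatible Cartan involutions for reductive subalgebras, an external ingredient not established in the paper. To salvage your route you would have to start from an honest maximal compact subgroup $G\subset L$ built around the given compact subgroups $C_i$ and decompose it via Corollary \ref{cor-max-cpt} --- at which point you are essentially reproducing the paper's argument.
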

\begin{proof}	Let $ C_i \subset L_i \cap M$ be  a compact subgroup of positive dimension. Then $ C:= C_1\cdots C_r$ is a compact subgroup in $ M$. Let $H$ be  a maximal compact subgroup of  $M $ containing $ C $, and  $ G $ be  a maximal compact subgroup of  $L $ containing $H $. Then   by Corollary \ref{cor-max-cpt},  $ G=G_1\cdots G_r $ where $ G_i  $ is a maximal compact subgroup of $ L_i $ for each $ i=1,\dots ,r $. 	Since $ L_i $ is of Type-I, $ G_i $ is simple for all $ i $. Next we 	claim that  $ \c_i\, \subset \g_i \  \  \forall \,i$. To see this note that $\c_i\,\subset \,  \g\,=\, \g_1+\dots +\g_r $. Let $ X\in \c_i $ and $\c_i \subset \l_i $ . Then $ X=X_1+\cdots +X_r $ for some $ X_i\in \g_i \subset \l_i $. Thus 
	$$ X-X_i \,=\, X_1+\cdots X_{i-1}+X_{i+1}+ \cdots +X_r\,.   $$
But $ X-X_i \, \in \, \l_i $ and  $X_1+\cdots X_{i-1}+X_{i+1}+ \cdots +X_r \, \in \l_1 \oplus\cdots \oplus\l_{i-1} \oplus \l_{i+1} \oplus \l_r $.
Hence $ X-X_i =0$ which proves the claim.  Thus $ \c_i \subset \g_i\cap \h $, and hence  $\dim \g_i\cap \h >0$. Now in view of Theorem \ref{mostow}, the proof follows from Corollary \ref{cor-h-gg34}. 
\end{proof}

\begin{corollary}
Let $L$ be a semisimple  Lie group, and $ M\subset L $ be a closed semisimple subgroup. 
Assume that both  $ L $ and $ M$  admit  semisimple  maximal compact subgroups.  
Let $ L_1, \dots, L_{r} $ be the simple factors of $ L $, and $ M_1,\dots, M_{s} $ be the simple factors of $M$ such that $r\geq s  $. Assume that $M_i \subset L_i$ and both  $L_i,\,M_i$ are of Type-I for all $ i=1,\dots ,s$. Then $$ H^4(L/M,\R) =0 \, .$$  
\end{corollary}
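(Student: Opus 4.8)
The plan is to reduce to the compact setting by Mostow's theorem and then apply Corollary \ref{cor-h-gg34}, the key point being that the semisimplicity of $G$ forces $\dim_\R \z(\g) = 0$ and thereby kills all the binomial-coefficient terms in the dimension formula.

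First I would fix a maximal compact subgroup $H$ of $M$ and a maximal compact subgroup $G$ of $L$ with $H \subseteq G$; by Theorem \ref{mostow} together with \eqref{homotopy} we get $H^4(L/M,\,\R) \simeq H^4(G/H,\,\R)$. Since $L$ and $M$ are semisimple and admit semisimple maximal compact subgroups, both $G$ and $H$ are compact semisimple, and $H$ is connected (a maximal compact subgroup of the connected group $M$ is connected). By Corollary \ref{cor-max-cpt} one writes $G = G_1 \cdots G_r$ and $H = H_1 \cdots H_s$, where $G_i$ (respectively, $H_i$) is a maximal compact subgroup of $L_i$ (respectively, $M_i$); the Type-I hypothesis forces each $G_i$ and each $H_i$ to be simple. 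Writing $\g = \g_1 \oplus \cdots \oplus \g_r$ and $\h = \h_1 \oplus \cdots \oplus \h_s$ for the corresponding Lie algebras, all summands are simple, so $\#\g = r$ and $\#\h = s$.

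Next I would verify the inclusion $\h_i \subseteq \g_i$, reproducing the argument used in the preceding vanishing corollary for $H^3$. For $X \in \h_i$, write $X = \sum_j X_j$ with $X_j \in \g_j \subseteq \l_j$; since $X \in \h_i \subseteq \l_i$, the element $X - X_i$ lies in $\l_i \cap \big( \bigoplus_{j \neq i} \l_j \big) = 0$, whence $X = X_i \in \g_i$. Consequently $\h \cap \g_i = \h_i \neq 0$ for $1 \leq i \leq s$ and $\h \subseteq \g_1 \oplus \cdots \oplus \g_s$, which is precisely the positioning hypothesis required to invoke Corollary \ref{cor-h-gg34}.

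Finally, with $l := \dim_\R \z(\g) = 0$ (as $G$ is semisimple) and $r = \#[\g,\,\g]$, Corollary \ref{cor-h-gg34}(2) yields $\dim_\R H^4(G/H,\,\R) = l(r-s) + \#(\h,H) - s + \binom{l}{4} = \#(\h,H) - s$. Because $H$ is connected, each simple ideal $\h_i$ is a minimal $H$-invariant ideal, so $\#(\h,H) = \#\h = s$; equivalently, $\mathrm{Ad}(H) \subseteq \mathrm{Ad}(G)$ preserves each factor $\g_i$ and hence each $\h_i = \h \cap \g_i$, so no simple factors of $\h$ are permuted. Therefore $\dim_\R H^4(G/H,\,\R) = 0$ and $H^4(L/M,\,\R) = 0$. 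The only non-formal points are the inclusion $\h_i \subseteq \g_i$ and the identification $\#(\h,H) = s$, and I expect no serious obstacle beyond correctly matching the hypotheses of Corollary \ref{cor-h-gg34}.
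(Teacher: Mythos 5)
Your proof is correct and follows essentially the same route as the paper: reduction via Theorem \ref{mostow} and \eqref{homotopy}, Corollary \ref{cor-max-cpt} to split the semisimple maximal compact subgroups along the simple factors (each factor simple by the Type-I hypothesis), the positioning $\h \subset \g_1 \oplus \cdots \oplus \g_s$ with $\h \cap \g_i \neq 0$, and then Corollary \ref{cor-h-gg34}(2) with $l = 0$ together with $\#(\h,H) = \#\h = s$ for connected semisimple $H$. The only, immaterial, difference is one of bookkeeping: the paper builds the compatible pair bottom-up (choosing $H_i \subset M_i$ and then $G_i \supseteq H_i$ in $L_i$, so the positioning hypothesis is automatic by construction), whereas you start from an arbitrary pair $H \subseteq G$ of maximal compact subgroups and verify $\h_i \subseteq \g_i$ by the direct-sum argument, which is precisely the argument the paper itself uses in the preceding $H^3$ vanishing corollary.
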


\begin{proof}
Let $H_i\subset M_i$ be a maximal compact subgroup. Since $ M_i $ is of Type-I, $ H_i $ is simple for all $i$. Let  $G_i\subset L_i$ be a maximal compact subgroup containing $ H_i $.  Set $ H:=H_1\cdots H_s \subset M$ and $ G:=G_1\cdots G_r \subset L$. 
Then using Corollary \ref{cor-max-cpt}, it follows that  $ H$ is a maximal compact subgroup of $ M $ and  $G$ is a maximal compact subgroup of $L$ containing $ H $. Here $ \h\subset \g_1\oplus \cdots \oplus \g_s $ and $ \h\cap \g_i \, \neq \, 0$ for all $ i=1, \dots s $. Since $ H $ is connected and semisimple, $ \#(\h, H) = \#\h=s$. Now in view of Theorem \ref{mostow}, the proof follows from Corollary \ref{cor-h-gg34}. 
\end{proof}

\begin{corollary}
Let $ L $ be a connected Lie group which admits a semisimple maximal compact subgroup. Let $ M\subset L$ be a closed subgroup with finitely many connected components, and $ M^0 $ be a simple Lie group of Type-I. Then  $ H^4(L/M,\R) =0$.
\end{corollary}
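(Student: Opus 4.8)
The plan is to pass from the possibly non-compact space $L/M$ to a compact model and then invoke the vanishing already recorded in Corollary \ref{cor-g-semisimple}\eqref{cor1.3-2c}. First I would apply Theorem \ref{mostow}: since $L$ is connected and $M$ is closed with finitely many connected components, there is a maximal compact subgroup $G$ of $L$ for which $H := G \cap M$ is a maximal compact subgroup of $M$, with $H \subseteq G$, and $G/H \hookrightarrow L/M$ is a deformation retraction. By \eqref{homotopy} this gives $H^4(L/M,\, \R) \simeq H^4(G/H,\, \R)$, so it is enough to prove $H^4(G/H,\, \R) = 0$.

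Next I would verify the two hypotheses of Corollary \ref{cor-g-semisimple}\eqref{cor1.3-2c}, namely that $G$ is compact semisimple and that $H^0$ is simple. The group $G$ is semisimple because $L$ is assumed to admit a semisimple maximal compact subgroup and all maximal compact subgroups of $L$ are mutually conjugate. For $H^0$, I would use the structure theory of maximal compact subgroups: since $M$ has finitely many components, the restriction $H \cap M^0$ is a maximal compact subgroup of $M^0$; as maximal compact subgroups of a connected Lie group are connected, $H \cap M^0$ is connected and hence equals $H^0$. Because $M^0$ is of Type-I, a maximal compact subgroup of $M^0$ is a simple Lie group by Definition \ref{def-type-I-type-II}; thus $H^0$ is simple, and in particular $\dim H > 0$.

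Having established that $G$ is compact semisimple and $H^0$ is simple, Corollary \ref{cor-g-semisimple}\eqref{cor1.3-2c} applies verbatim and yields $\dim_\R H^4(G/H,\, \R) = 0$, whence $H^4(L/M,\, \R) = 0$ by the reduction in the first paragraph. The one point requiring care --- rather than any serious calculation --- is the identification $H^0 = H \cap M^0$ as a maximal compact subgroup of $M^0$, which is where the hypothesis on $M^0$ is converted into simplicity of $H^0$; everything else is a direct citation of Theorem \ref{mostow} and Corollary \ref{cor-g-semisimple}.
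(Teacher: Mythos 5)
Your proposal is correct and is exactly the paper's argument: the paper's proof is the one-line reduction "In view of Theorem \ref{mostow}, the proof follows from Corollary \ref{cor-g-semisimple}\eqref{cor1.3-2c}," and your write-up simply makes explicit the two verifications (conjugacy of maximal compact subgroups gives $G$ semisimple; $H^0 = H \cap M^0$ is a maximal compact subgroup of the Type-I group $M^0$, hence simple) that the paper leaves to the reader.
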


\begin{proof}
In view of Theorem \ref{mostow}, the proof follows from Corollary \ref{cor-g-semisimple}\eqref{cor1.3-2c}.
\end{proof}

\section{Appendix : Maximal compact subgroups and types of simple Lie groups}\label{appendix}

In this appendix we deal with two points which are required in \S \ref{appl-thm-noncompact}
:  in the first part  we describe certain subtle issues regarding relations between a maximal compact
subgroup of a semisimple groups and those of its simple factors, and in the second part we list all the Type-I, Type-II simple Lie groups.

\begin{lemma}\label{contractible} Let $G$ be a connected Lie group. 
\begin{enumerate}
\item Let $A$ be a connected compact subgroup. Then the coset space $G/A$ is
contractible if and only if $A$ is a maximal compact subgroup.

\item Let $B$ be a connected closed subgroup such that the coset space $G/B$ is
contractible. Then a maximal compact subgroup in $B$ is a maximal compact subgroup in $G$.
\end{enumerate}
\end{lemma}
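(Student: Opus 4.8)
The plan is to derive both assertions from two standard inputs: the Cartan--Iwasawa--Malcev structure theorem, which guarantees that a maximal compact subgroup $K$ of a connected Lie group $G$ is connected and that $G$ is diffeomorphic to $K \times \R^d$, so that $G/K$ is diffeomorphic to $\R^d$ and in particular contractible; and the elementary fact that for a chain of closed subgroups $A \subseteq B \subseteq G$ the natural projection $G/A \to G/B$ is a fiber bundle with fiber $B/A$. Everything below is organized so that part (2) feeds off part (1).

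For part (1), the implication ``$A$ maximal compact $\Rightarrow$ $G/A$ contractible'' is immediate from the structure theorem, since then $G/A \cong \R^d$. For the converse I would argue by contraposition. If the connected compact subgroup $A$ is not maximal, then it is properly contained in some maximal compact subgroup $K$, because every compact subgroup lies in a maximal one; moreover $K$ is connected, so $A \subsetneq K$ forces $\dim (K/A) > 0$. I would then invoke the fiber bundle $K/A \to G/A \to G/K$. Since the base $G/K$ is contractible, the long exact homotopy sequence shows that the fiber inclusion $K/A \hookrightarrow G/A$ induces isomorphisms on all homotopy groups, and Whitehead's theorem (both spaces being manifolds) promotes this to a homotopy equivalence $K/A \simeq G/A$. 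But $K/A$ is a nonempty compact manifold of positive dimension, so $H^{\dim(K/A)}(K/A;\, \Z/2\Z) \neq 0$ and $K/A$ is not contractible; hence $G/A$ is not contractible, which is exactly the contrapositive.

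For part (2), let $A$ be a maximal compact subgroup of the connected group $B$. Then $A$ is connected and compact, and part (1) applied to $B$ shows that $B/A$ is contractible. I would now consider the fiber bundle $B/A \to G/A \to G/B$, in which both the fiber $B/A$ and the base $G/B$ are contractible. The long exact sequence of homotopy groups then forces $\pi_n(G/A) = 0$ for all $n$, and since $G/A$ is a manifold, Whitehead's theorem gives that $G/A$ is contractible. Finally, $A$ is a connected compact subgroup of $G$ with $G/A$ contractible, so part (1), in the direction already established, yields that $A$ is a maximal compact subgroup of $G$.

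The main obstacle will be the bookkeeping in part (1) rather than any deep technical point: one must use the connectedness of maximal compact subgroups to be sure that enlarging $A$ to $K$ genuinely raises the dimension, so that $K/A$ is a positive-dimensional closed manifold, and one must legitimately transport the (non)contractibility across the bundles $K/A \to G/A \to G/K$ and $B/A \to G/A \to G/B$. Both are dispatched by the contractibility of the relevant base together with Whitehead's theorem, with the non-contractibility of a positive-dimensional closed manifold detected by its mod $2$ fundamental class closing the argument.
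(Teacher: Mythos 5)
Your proposal is correct and takes essentially the same route as the paper: both directions of (1) rest on the Cartan--Iwasawa--Malcev/Hochschild theorem, the fiber bundles $K/A \to G/A \to G/K$ and $B/A \to G/A \to G/B$ over contractible bases, and the fact that a closed connected manifold of positive dimension is not contractible because its top mod-$2$ (co)homology is nonzero. The only cosmetic difference is that where you run the long exact homotopy sequence plus Whitehead's theorem to get the homotopy equivalences, the paper simply trivializes each bundle over its contractible base ($G/A \simeq G/H \times H/A$, etc.), which yields the same conclusions.
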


\begin{proof}
We first prove (1). If $A$ is a maximal compact subgroup, then by \cite[Theorem 3.1., pp. 180--181]{H},  
$G/A$ is contractible.
Now suppose that $G/A$ is contractible. We will show that $A$ is a maximal compact subgroup in $G$.
 As $A$ is connected compact, there is a maximal compact subgroup 
$H$ of $G$ such that $A \subset H$. Then by \cite[Theorem 3.1., pp. 180--181]{H},   $G/H$ is contractible. Observe that $G/A$ is a fiber bundle
over $G/H$ with fiber $H/A$. As the base $G/H$ is contractible  it follows that $G/A \simeq G/H \times H/A$
as fiber bundles. Thus $G/A$ has the same homotopy type as $H/A$, and hence $H/A$ is contractible. 
Recall that a compact manifold without boundary 
can not be contractible if it is of positive dimension
(this is because if $M$ is a compact manifold
without boundary and if 
$n := \dim M$ the $n$-th singular homology 
with coefficient in $\Z_2$ satisfies
$H_n ( M, \Z_2) = \Z_2$).
Thus we have that 
$\dim H/A = 0$ which forces  $A = H$ as $H$ is connected. Hence $A$ is a maximal compact subgroup  in $G$.

We now prove (2). Let $K \subset B$ be a maximal compact subgroup of $B$. 
Note that $G/K$ is a fiber bundle over $G/B$ with fiber $B/K$. As the base $G/B$ is
contractible it follows,  as above,  that $G/K \simeq G/B \times B/K$ as fiber bundles.
As $K$ is a maximal compact subgroup by  \cite[Theorem 3.1., pp. 180--181]{H} $B/K$ is contractible. As $G/B$ is contractible, so is
$G/K$. As $K$ is compact, by (1), it follows that $K$ is a maximal compact subgroup of $G$. 
\end{proof}

The next lemma is formulated in a slightly more general context, 
but  we need only a special case of it in the
proof of Proposition 
\ref{prop-max-cpt}.

\begin{lemma}\label{lemma-max-cpt} Let $C$ be a connected compact Lie group, and let $V$ be a finite dimensional vector space over $\R$.  Let $n:= \dim  V$.
Let  $H:= V \times C$ be the direct 
product of the vector group $V$ and $C$. Let $\Gamma \subset H$ be a discrete central subgroup
of $H$, and let $\theta \,:\, H \,\longrightarrow\, H/\Gamma$ be the quotient homomorphism.
\begin{enumerate}
\item Then $\Gamma$ is a finitely generated abelian group, and 
moreover,  ${\rm rank} \, \Gamma \,\leq\, n$.
\item Suppose ${\rm rank} \, \Gamma \,=\,m$. 
Then there exists closed subgroups $W, T \,\subset\, H/\Gamma$
 such that $W $ is isomorphic to
$\R^{n-m}$,  $T$ is isomorphic to the $m$-dimensional
 compact torus  $(\SS^1)^m$ and
 $H/\Gamma \,=\, W \, T\,  \theta (C)$.
Moreover, any pair of subgroups among the subgroups
$W,\, T,\, \theta (C)$ commute and intersect
in a discrete subgroup. In particular,
$H/\Gamma$ is isomorphic to $W \times (T \theta (C))$, and, consequently, $T \theta (C)$ is 
a maximal compact subgroup of $H/\Gamma$.
\end{enumerate}
\end{lemma}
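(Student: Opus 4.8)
The plan is to prove the two parts of Lemma~\ref{lemma-max-cpt} in order, since part (2) relies on the structural control over $\Gamma$ established in part (1).

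\textbf{Part (1).}
First I would observe that $\Gamma$ is a discrete central subgroup of $H = V \times C$, and that discreteness together with the fact that $H$ is a second-countable locally compact group forces $\Gamma$ to be countable; moreover, being a discrete subgroup of the abelian group $Z(H) = V \times Z(C)$, it is a finitely generated abelian group by the standard structure theory of discrete subgroups of Lie groups (a discrete subgroup of $\R^k \times (\text{compact})$ is finitely generated, with free rank at most $k$). To pin down the rank bound, I would consider the projection $\pi : H = V \times C \to V$. Since $\pi(\Gamma)$ is the continuous image of $\Gamma$, and the kernel of $\pi|_\Gamma$ lands in the compact group $\{0\} \times C$ where it is discrete, hence finite, the torsion-free rank of $\Gamma$ equals the rank of $\pi(\Gamma)$ as a subgroup of $V \cong \R^n$. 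A discrete subgroup of $\R^n$ is free abelian of rank at most $n$; the key point to verify is that $\pi(\Gamma)$ is discrete in $V$, which follows because $\ker(\pi|_\Gamma)$ is finite and $\Gamma$ is discrete in $H$. Hence $\operatorname{rank}\Gamma \le n$.

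\textbf{Part (2).}
Assuming $\operatorname{rank}\Gamma = m$, the plan is to produce the three subgroups $W$, $T$, $\theta(C)$ by carefully splitting off the toral directions coming from the lattice part of $\Gamma$. Write the free part of $\pi(\Gamma) \subset V$ as a lattice $\Lambda$ of rank $m$ spanning an $m$-dimensional subspace $V_1 \subset V$, and choose a complementary subspace $V_0$ with $V = V_0 \oplus V_1$ and $\dim V_0 = n - m$. I would then set $W := \theta(V_0 \times \{e\})$, which is isomorphic to $\R^{n-m}$ since $V_0$ meets $\Gamma$ only in $\{0\}$ (the free part of $\Gamma$ projects onto $\Lambda \subset V_1$). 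For the torus, I would take $T := \theta(V_1 \times \{e\})$: the quotient of $V_1 \cong \R^m$ by the rank-$m$ lattice $\Lambda$ is a compact torus $(\SS^1)^m$, so $T$ is a homomorphic image of this torus and hence compact and of the stated form. Finally $\theta(C)$ is the image of the compact connected group $C$. Since $V_0 \times \{e\}$, $V_1 \times \{e\}$ and $\{0\} \times C$ pairwise commute inside the abelian-by-compact group $H$ and together generate $H$, their images under $\theta$ commute pairwise and satisfy $H/\Gamma = W\,T\,\theta(C)$. Each pairwise intersection is the image of a subgroup of $H$ that meets the product of the other two factors in a set whose preimage lies in $\Gamma$ together with the (discrete) overlaps forced by the central torsion of $\Gamma$; hence these intersections are discrete. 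Consequently $W$ is a closed normal complement and $H/\Gamma \cong W \times (T\,\theta(C))$, with $T\,\theta(C)$ compact. That $T\,\theta(C)$ is a \emph{maximal} compact subgroup then follows because $H/\Gamma \cong \R^{n-m} \times (T\,\theta(C))$ and any compact subgroup must project trivially to the $\R^{n-m}$ factor, so it is contained in $T\,\theta(C)$.

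\textbf{Main obstacle.}
The routine identifications of $W$ and $T$ are straightforward; the delicate point I expect to require the most care is verifying that the pairwise intersections of $W$, $T$, $\theta(C)$ are genuinely discrete and that the product decomposition is a direct product $W \times (T\,\theta(C))$ rather than merely an internal product with larger overlap. The subtlety is that $\Gamma$ may have torsion elements mixing the $C$-direction with the lattice directions, and elements of $\Gamma$ may have components in both $V_1$ and $Z(C)$, so $T$ and $\theta(C)$ need not intersect only in the identity; I would handle this by showing their intersection is the image of $(V_1 \times Z(C)) \cap \Gamma$ modulo $\Gamma$, which is discrete because $\Gamma$ is discrete and this intersection sits inside a fixed compact group. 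Once discreteness of the overlaps is secured, the direct-product structure and the maximality of $T\,\theta(C)$ follow formally from the splitting $H/\Gamma \cong W \times (T\,\theta(C))$ and the fact that a maximal compact subgroup of $\R^{n-m} \times K$, for $K$ compact, is exactly $\{0\} \times K$.
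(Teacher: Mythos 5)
Your part (2) has a genuine, fatal gap: the subgroup $T:=\theta(V_1\times\{e\})$ is in general \emph{not} compact, not closed, and not a torus. Your justification presupposes that the map $V_1\to H/\Gamma$, $v\mapsto\theta((v,e))$, factors through $V_1/\Lambda$; this would require $\Lambda\times\{e\}\subset\Gamma$, but an element of $\Gamma$ lying over a lattice point $\lambda\in\Lambda$ has the form $(\lambda,c_\lambda)$ with $c_\lambda\in Z(C)$ typically nontrivial, and $\Gamma\cap(V\times\{e\})$ may well be trivial. Concretely, take $V=\R$, $C=\SS^1$, and $\Gamma$ the cyclic group generated by $(1,e^{2\pi i\alpha})$ with $\alpha$ irrational. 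Then $\Lambda=\pi(\Gamma)=\Z$, $V_1=\R$, and $H/\Gamma$ is a $2$-torus, namely $\R^2/\bigl(\Z(1,\alpha)+\Z(0,1)\bigr)$; your $T=\theta(\R\times\{1\})$ is the image of a line of irrational slope, i.e.\ a dense one-parameter subgroup (an injective image of $\R$, not a quotient of $\SS^1$), and $T\cap\theta(C)$ is \emph{dense} in the circle $\theta(C)$, not discrete. So the claims that $T\simeq(\SS^1)^m$, that the pairwise intersections are discrete, and hence the direct-product decomposition and the maximality argument, all collapse. This cannot be repaired by choosing $V_1$ differently: since $\Gamma$ may meet $V\times\{e\}$ trivially, no subgroup contained in $\theta(V\times\{e\})$ can serve as $T$; the torus must be tilted into the $C$-direction. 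Your ``main obstacle'' paragraph shows you sensed the mixing of the $V_1$- and $Z(C)$-directions, but you located the danger in the size of $T\cap\theta(C)$ rather than in the compactness of $T$ itself, and the proposed remedy (``the intersection sits inside a fixed compact group, hence is discrete'') is not a valid inference, as the example shows.

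The paper's proof is built precisely around this point. One chooses $(v_1,c_1),\dots,(v_m,c_m)\in\Gamma$ whose first coordinates form a $\Z$-basis of $\pi(\Gamma)$, notes that $c_i\in Z(C)$ lies in a maximal torus $T_C$ of $C$, picks $X_i\in{\rm Lie}(T_C)$ with $\exp(X_i)=c_i$, and defines the homomorphism $\phi(t_1,\dots,t_m)=\bigl(t_1v_1+\cdots+t_mv_m,\,\exp(t_1X_1+\cdots+t_mX_m)\bigr)$. Then $\phi(\Z^m)\subset\Gamma$ and, conversely, $\phi(t)\in\Gamma$ forces $t\in\Z^m$ (by $\R$-linear independence of the $v_i$), so $\phi$ descends to an injective map of the compact torus $\R^m/\Z^m$ into $H/\Gamma$; its image is the correct $T$, and $T\cap\theta(C)\subset\theta\bigl((\{0\}\times C)\cap\Gamma\bigr)$ is finite. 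Separately, in part (1) your justification for the discreteness of $\pi(\Gamma)$ — that it ``follows because $\ker(\pi|_\Gamma)$ is finite and $\Gamma$ is discrete'' — is not a correct implication: a generic lattice in $\R\times\R$ meets $\{0\}\times\R$ trivially yet projects to a dense subgroup of the first factor. What makes it true here is that compactness of $C$ renders $\pi$ proper, so $\Gamma\cap(K\times C)$ is finite for every compact $K\subset V$; this is exactly how the paper argues, and the fix is one line, but as written the step is unjustified (and discreteness of $\pi(\Gamma)$ is genuinely needed for the rank bound, since non-discrete subgroups of $\R^n$ can have rank exceeding $n$).
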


\begin{proof} We prove (1) first.
As $\Gamma$ is discrete and central it follows that $\Gamma$ is finitely generated. 
Let  $\pi \,:\, H \,=\, V \times C \,\longrightarrow\, V$
denote the projection to the first factor. As $C$ is compact, $\pi$
is a proper map, and hence $\pi (\Gamma)$ is a discrete subgroup of $V$. Thus
${\rm rank} \, \pi(\Gamma) \leq n$. Now it is easy to see that the group of torsion elements
of $\Gamma$ is $ \ker \pi \cap \Gamma$, and hence  ${\rm rank} \, \Gamma
\,=\, {\rm rank} \, \Gamma /( \ker \pi ) \cap \Gamma \,=\, {\rm rank} \, \pi(\Gamma) \,\leq\, n$.

We now prove (2). As  ${\rm rank } \, \Gamma  \,=\, {\rm rank } \, \pi (\Gamma)
\,=\, m$ we may choose elements in $\Gamma$, say,  $(v_1,\, c_1),\, \cdots,\, (v_m,\, c_m)$  such that
$\{v_1,\, \cdots,\, v_m\}$ is a $\Z$-basis of $\pi (\Gamma)$. 
Since $\pi (\Gamma)$ is a discrete subgroup of $V$
it follows that $\{v_1,\, \cdots,\, v_m\}$ is $\R$-linearly independent.
Let $W_1 \,:=\, \R v_1 + \cdots + \R v_m$, and let $W$ be a subspace of $V$ 
complementary to $W_1$. Let $T$ be a maximal torus of the compact Lie group $C$.
As  $c_1,\,  \cdots,\, c_m \,\in \,Z (C)$ it follows that $c_1,
\,\cdots,\, c_m \,\in\, T$.  Choose $X_1,\, \cdots,\, X_m \,\in\, {\rm Lie}(T)$
 such that $\exp (X_i ) \,=\, c_i $ for all $i$.
We now set  a map $\phi \,:\, \R^m \,\lto\, V \times C$ by
$$\phi( (t_1,\, \cdots,\, t_m)) \,:=\, (t_1 v_1 + \cdots + t_m v_m, \, 
\exp (t_1 X_1 + \cdots+ t_m X_m) ),  \,\, (t_1,\, \cdots, \,t_m ) \,\in\, \R^m.
$$

As the Lie algebra  ${\rm Lie}(T)$ is  abelian, the map
$\phi$ is a (smooth) group homomorphism. 
We now show that $\phi ( (t_1,\, \cdots,\, t_m) ) \,\in\, \Gamma $ if and only if 
$ (t_1,\, \cdots,\, t_m) \,\in\, \Z^m$. It is easy to see that $\phi ( \Z^m) \,\subset\, \Gamma$.
Now if $(t_1 v_1 + \cdots + t_m v_m, \, 
\exp (t_1 X_1 + \cdots+ t_m X_m) ) \,\in\, \Gamma$ then
$t_1 v_1 + \cdots + t_m v_m \,\in\, \pi (\Gamma)$. As $\{v_1,\, \cdots,\, v_m\}$ is $\R$-linearly
independent and is a $\Z$-basis for $\pi (\Gamma)$ it follows that $t_1,\, \cdots,\, t_m \,\in\, \Z$. 
As  $\R^m / \Z^m $ is compact one has the embedding of groups, say,  $\phi' \,:\, \R^m / \Z^m 
\,\longrightarrow\, W \times C / \Gamma$ which is induced from $\phi$.
Let $T \,:=\, \phi' ( \R^m / \Z^m)$. Then $T \,\simeq\, (\SS^1)^m$. 
We next show that $T \cap \theta (C)$ is finite. For this one notices that if 
 $(t_1 v_1 + \cdots + t_m v_m, \, 
\exp (t_1 X_1 + \cdots+ t_m X_m) ) \,=\, (0,\, c) \, {\rm mod} \, \Gamma$.
Then $t_1 v_1 + \cdots + t_m v_m
\,\in\, \pi (\Gamma)$. Thus $t_i \,\in\, \Z$ for all $i$.
Thus $(t_1 v_1 + \cdots + t_m v_m, \, 
\exp (t_1 X_1 + \cdots+ t_m X_m) ) \in \Gamma$, and hence $(0,\, c) \,\in\, 0 \times C \cap \Gamma$.
Thus $T \cap \theta (C) \,\subset\, \theta ( C \cap \Gamma)$,  and hence $T \cap \theta (C)$ is finite.
\end{proof}

In this section we also require facts associated to the Cartan decomposition in a semisimple Lie group  for which we refer to \cite[p.~362, Theorem 6.31]{Kn} for details.  If $L$ is a semisimple Lie group and if
$\sigma \,:\, \widetilde{L}\,\longrightarrow\, L$ is a covering homomorphism 
then we may choose Cartan involutions 
$\Theta \,:\, L \,\longrightarrow\, L,\,\, \widetilde{\Theta} \,: \,\widetilde{L}
\,\longrightarrow\,
\widetilde{L}$, aligning with each other, that is, 
$\Theta \circ \sigma \,=\, \sigma \circ \widetilde{\Theta}$.
If $G$ is a semisimple group
and $K$ denotes the group of  fixed points
of a Cartan involution then $Z(G) \subset K$.
Moreover, it follows from \cite[p.~362, Theorem 6.31]{Kn} that
 $K/Z(G)$ is a maximal compact subgroup of $G/Z(G)$. 
Thus, one may observe from the facts above that $K$ need not be a maximal compact 
subgroup in $G$ but any maximal compact subgroup of $K$ is a maximal compact subgroup of $G$.
However, the Lie algebra $\k$ is a compact Lie algebra (hence it is reductive).
We conclude that 
 any maximal compact subgroup of a simply connected semisimple group is either semisimple
 or the trivial group.
 We also need Weyl's theorem which says that the universal cover of a compact semisimple group is compact; see
\cite[p.~268, Theorem 4.69]{Kn}. 

If $L$ is a connected Lie group and if $L$ is 
an almost direct product of the distinct closed connected subgroups
$L_1,\, \cdots,\, L_n$ then $L_i $ and $L_j$ commute if $i \,\neq\, j$.   Moreover,
$Z (L) \,=\, Z(L_1) \cdots Z(L_n)$.  The kernel of the homomorphism
$\phi \,:\, L_1 \times \cdots \times L_n \,\longrightarrow\, L$
defined by
$\phi (x_1, \, \cdots,\, x_n) \,:=\, x_1 \cdots x_n$ is isomorphic to $L_1 \cap L_2 \cdots L_n
\,\simeq\, L_i \cap L_1 \cdots \widehat{L_i} \cdots L_n$ (here $\widehat{A}$ means $A$ is omitted).
Moreover,  for all $i$,
$
L_i \cap L_1 \cdots \widehat{L_i} \cdots L_n
\,=\, Z(L_1) \cap Z(L_1) \cdots \widehat{ Z(L_i)} \cdots Z(L_n).
$

\begin{proposition}\label{prop-max-cpt}
Let $ L $ be a semisimple Lie group. Suppose  $ L_1,\, \cdots ,\,L_{r} $
 are all the simple factors of $ L $. Let $\Gamma \,:=\, L_1\cap L_2\cdots L_r $.
Then the following statements hold:
\begin{enumerate}

\item If  $ |\Gamma | \,<\, \infty $,  then  for any maximal compact subgroup $ K $ of $ L $
there exist maximal compact subgroup $K_i$ in $ L_i $ for all $ i $ such that $K\,=\,
K_1\cdots K_r $.

\item If  $ |\Gamma | \,=\, \infty $, then  $\Gamma$
is a finitely generated (abelian) group of rank $ 1 $.  One also has that,
for all $i$,  if $K_i$ is a the subgroup of fixed points
corresponding to a (global) Cartan involution of $L_i$ then
$K_i \,\simeq\, \R \times [K_i,\, K_i]$ and  $[K_i,\, K_i]$
is a maximal compact  subgroup of $L_i$.  Moreover, for all $i$,
either $K_i$ is abelian (this happens only when
$L_i \,\simeq\, \widetilde{ {\rm SL}_2 (\R)}$, the simply connected cover of ${\rm SL}_2 (\R)$) or 
$[K_i, K_i]$ is compact semisimple.
Further, for
any maximal compact subgroup $ K $ of $ L $ 
there exist Cartan involutions of $L_i$, for all $i$ such that if $K_i$ is the corresponding  subgroup of fixed points then $K= 
 T [K_1,\, K_1] \cdots [K_r,\, K_r] $ where $T$ is a central (compact) torus of $K$ with $\dim T \,=\,1$.
 In particular,  any maximal compact subgroup
 of $L$ is not semisimple and the center is one dimensional.
 \end{enumerate}
\end{proposition}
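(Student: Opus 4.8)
The plan is to present $L$ as a quotient of the direct product of its simple factors and then push Cartan involutions across that quotient. Set $\widetilde{L} := L_1 \times \cdots \times L_r$ and let $\phi \colon \widetilde{L} \to L$ be the multiplication map $\phi(x_1,\ldots,x_r) := x_1\cdots x_r$. Since distinct simple factors of $L$ commute and meet pairwise in central elements, $\phi$ is a local isomorphism whose kernel is a discrete central subgroup, and the identification $\ker\phi \simeq L_1 \cap L_2\cdots L_r = \Gamma$ recalled before the proposition shows $L \simeq \widetilde{L}/\Gamma$. The rank bound is the conceptual heart and is short: because $L_1$ commutes with $L_2\cdots L_r$, every element of $\Gamma = L_1 \cap L_2\cdots L_r$ already lies in $Z(L_1)$, and $Z(L_1)$ is a finitely generated abelian group whose free abelian rank is at most $\dim_\R \z(\k_1) \le 1$ (the centre of a maximal compactly embedded subalgebra of a simple Lie group is at most one dimensional, see \cite[Proposition 6.2]{He}). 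Hence $\mathrm{rank}\,\Gamma \le 1$, so if $|\Gamma|=\infty$ then $\Gamma$ is finitely generated abelian of rank exactly $1$, proving the first assertion of (2).

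For part (1) I would fix a maximal compact subgroup $K$ of $L$ and choose a Cartan involution $\Theta$ of $L$ whose fixed point group $L^{\Theta}$ contains $K$ as its maximal compact subgroup. Using the compatibility of Cartan involutions with coverings recalled above, I would lift $\Theta$ to a Cartan involution $\widetilde{\Theta}$ of $\widetilde{L}$ with $\Theta\circ\phi = \phi\circ\widetilde{\Theta}$. As $\widetilde{\Theta}$ preserves each simple ideal of $\mathrm{Lie}(\widetilde{L}) = \bigoplus_i \l_i$, it splits as $\widetilde{\Theta} = \Theta_1\times\cdots\times\Theta_r$, giving $\widetilde{L}^{\widetilde{\Theta}} = \prod_i L_i^{\Theta_i}$. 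Choosing maximal compact subgroups $K_i$ in each $L_i$ (equivalently, in each $L_i^{\Theta_i}$), the product $\prod_i K_i$ is the maximal compact subgroup of $\widetilde{L}^{\widetilde{\Theta}}$; since $\Gamma$ is finite and central, the covering $\widetilde{L}^{\widetilde{\Theta}} \to L^{\Theta}$ matches maximal compact subgroups, so $K = \phi(\prod_i K_i) = K_1\cdots K_r$ after identifying each $K_i$ with its image in $L_i \subseteq L$.

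For part (2) assume $|\Gamma|=\infty$. The symmetry of the almost direct product relations gives $\Gamma \simeq L_i \cap L_1\cdots\widehat{L_i}\cdots L_r \subseteq Z(L_i)$ as abstract groups for every $i$, so each $Z(L_i)$ is infinite and hence each $L_i$ is of Hermitian type. A discrete infinite $Z(L_i) \subseteq L_i^{\Theta_i}$ cannot sit inside a compact group, so $K_i := L_i^{\Theta_i}$ is non-compact with one dimensional non-compact centre; thus $K_i \simeq \R \times [K_i,K_i]$, where the non-compact central line meets the compact semisimple part trivially, $[K_i,K_i]$ is compact semisimple by Weyl's theorem (recalled above), and $[K_i,K_i]$ is the maximal compact subgroup of $L_i$ (the case $[K_i,K_i]=\{e\}$, i.e. $K_i\simeq\R$, occurs precisely for $L_i \simeq \widetilde{{\rm SL}}_2(\R)$). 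Consequently $\widetilde{L}^{\widetilde{\Theta}} = \prod_i K_i$ is genuinely a direct product $V \times C$ with $V \simeq \R^{r}$ the product of the central $\R$-factors and $C = \prod_i [K_i,K_i]$ compact. I would then feed $\Gamma \subseteq Z(\widetilde{L}) \subseteq V\times C$ into Lemma \ref{lemma-max-cpt}, obtaining $L^{\Theta} = (V\times C)/\Gamma \simeq W \times (T\,\theta(C))$, where $\theta$ is the quotient map, $W \simeq \R^{\,r-\mathrm{rank}\,\Gamma}$, the central torus satisfies $T \simeq (\SS^1)^{\mathrm{rank}\,\Gamma}$, and $T\,\theta(C)$ is the maximal compact subgroup $K$ of $L$. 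Comparing centres of Lie algebras in $L^{\Theta} = W\times K$ gives $\dim_\R \z(\mathrm{Lie}(K)) = \dim_\R \z(\bigoplus_i \k_i) - \dim_\R W = r - (r-\mathrm{rank}\,\Gamma) = \mathrm{rank}\,\Gamma = 1$. Hence $T = Z(K)^0$ is one dimensional, $K = T\,[K_1,K_1]\cdots[K_r,K_r]$ is not semisimple, and its centre is one dimensional.

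The hard part will be the careful application of Lemma \ref{lemma-max-cpt} in part (2): one must present $\widetilde{L}^{\widetilde{\Theta}}$ in the exact shape $V\times C$ demanded by that lemma (the per-factor splitting $K_i\simeq\R\times[K_i,K_i]$ is what makes this an honest direct product), and then verify that the torus $T$ produced there is precisely the identity component of the centre of the maximal compact subgroup, so that $\dim_\R T = \mathrm{rank}\,\Gamma$. Once $\mathrm{rank}\,\Gamma \le 1$ is in hand via $\Gamma \subseteq Z(L_1)$, the remaining work is exactly this translation of a rank statement into the stated group-level decomposition.
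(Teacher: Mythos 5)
Your proposal is correct, and its two load-bearing components are the same as in the paper's proof: the bound ${\rm rank}\,\Gamma\le 1$ obtained by placing $\Gamma$ inside $Z(L_1)\subset Z(K_1)$ with $\dim Z(K_1)\le 1$, and, in part (2), the presentation $K_1\times\cdots\times K_r\simeq \R^r\times C$ (via $Z(K_i)^0\simeq\R$ and Weyl's theorem for $[K_i,K_i]$) fed into Lemma \ref{lemma-max-cpt}. What differs is how you transfer maximal compactness between the various groups. In part (1) the paper never uses Cartan involutions: since $\phi\colon L_1\times\cdots\times L_r\to L$ is a proper finite covering, $\phi^{-1}(K)$ is compact, hence lies in a maximal compact subgroup of the product, which is necessarily of the form $K_1\times\cdots\times K_r$; then $K\subset\phi(K_1\times\cdots\times K_r)=K_1\cdots K_r$ and maximality of $K$ gives equality. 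Your route through an involution $\Theta$ adapted to the given $K$ is also valid, but it silently invokes the conjugacy theorem for maximal compact subgroups of a connected Lie group (needed to know such an adapted $\Theta$ exists), which you should cite (e.g. \cite{H}), and the assertion that the finite covering $\widetilde L^{\widetilde\Theta}\to L^\Theta$ ``matches'' maximal compacts deserves its one-line justification (preimages of compact sets are compact, and $L^\Theta\simeq\R^m\times(\text{compact})$ has a \emph{unique} maximal compact subgroup). In part (2) the paper transfers maximal compacts from $(\prod_iK_i)/\Gamma$ to $L$ by proving contractibility of $\widetilde L/\prod_iK_i$ and applying its Lemma \ref{contractible}(2), keeping the argument self-contained; you instead use the fact recalled before the proposition (from \cite[Theorem 6.31]{Kn}) that any maximal compact subgroup of a Cartan fixed-point group is maximal compact in the ambient group, combined with uniqueness of the maximal compact subgroup of $W\times(T\,\theta(C))$ to pin down $K=T\,\theta(C)$ for the given $K$ --- equally correct, and your closing count $\dim_\R\z({\rm Lie}(K))=\dim_\R\z(\oplus_i\k_i)-\dim_\R W={\rm rank}\,\Gamma=1$ is a pleasant alternative to reading $\dim T=1$ directly off the lemma. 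The net trade-off: the paper's argument for (1) is more elementary (no conjugacy theorem), while your device of localizing all choices in a single Cartan involution adapted to $K$ makes the ``for any maximal compact $K$'' clause of (2) explicit, a point the paper leaves to an implicit conjugation at the very end.
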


\begin{proof} We begin by claiming that ${\rm rank} \, Z(L_1) \cap Z(L_2) \cdots Z(L_n)
\,\leq\, 1$. Recall that $Z(L_1) \cap Z(L_2) \cdots Z(L_n)$ 
is a discrete finitely generated abelian subgroup  of $Z(L)$.
Let $K_1$ be the fixed point subgroup of a Cartan involution of $L_1$.  Then  
 $Z(L_1) \subset K_1$; see \cite[p.~362, Theorem 6.31]{Kn}.  Thus $Z(L_1) \,
\subset\, Z(K_1)$,  and it is well-known that $\dim Z(K_1) \,\leq\, 1$.  Thus either $\dim Z(K_1) \,=\,0$ or 
$Z(K_1) \simeq \SS^1$ or $Z(K_1) \simeq \R$.
Since $Z(L_1) \cap Z(L_2) \cdots Z(L_n)$ is
a discrete subgroup the claim follows.

For the proof we set the homomorphism
$\phi \,:\, L_1 \times \cdots \times L_n \,\longrightarrow\, L$
defined by
$\phi (x_1,  \dots, x_n) \,:=\, x_1 \cdots x_n$. Let $\Gamma \,:=\, \ker \phi$. Then $\Gamma
\,\simeq\, L_1 \cap L_2 \cdots L_n$.

If $ |L_1 \cap L_2 \cdots L_n| \,<\, \infty$ then
the homomorphism $\phi$ is a finite
covering map (hence proper).  Let $K \,\subset\,
L$ be a maximal compact then $\phi^{-1} (K)$ is a compact subgroup of $L_1 \times \cdots \times L_n$.  Let
$M$ be a maximal compact of 
$L_1 \times \cdots \times L_n$ such that $M \subset \phi^{-1} (K)$.  It is easy to see that 
there exists
maximal compact subgroups $K_i $ of $L_i$ for all
$i$ such that $M \,=\, K_1 \times \cdots \times K_n$.
Now as $K \,\subset \,K_1 \cdots K_n$ by maximality
it follows that $K \,=\, K_1 \cdots K_n$.

Let $ |L_1 \cap L_2 \cdots L_n| \,=\, \infty$.
Let $K_i $ be the fixed point subgroup of a Cartan
involution of $L_i$, for all $i$. Recall that $Z(L_i) \,\subset\, K_i$ for all $i$.
It
follows that $|L_i \cap L_1 \cdots \widehat{L_i} \cdots L_n| \,=\, \infty$ for all $i$.  As 
$L_i \cap L_1 \cdots \widehat{L_i} \cdots L_n\,=\,
Z(L_i) \cap Z(L_1) \cdots \widehat{Z(L_i)} \cdots Z(L_n)$ it follows that $Z(K_i) \,\simeq\, \R$ for all $i$.
Observe that $\Gamma \subset Z(L_1) \times 
\cdots \times Z(L_n) \,\subset \,
Z(K_1) \times 
\cdots \times Z(K_n)$.
Since $L_1 \times \cdots \times L_n/ K_1 \times \cdots \times K_n$ is contractible, by Lemma \ref{contractible} (2), 
a maximal compact of $K_1 \times \cdots \times K_n / \Gamma$ is a maximal compact of 
$L_1 \times \cdots \times L_n / \Gamma$
(note that $K_1 \times \cdots \times K_n / \Gamma$
naturally embeds in $L_1 \times \cdots \times L_n / \Gamma$).
Now recall by Weyl's theorem that $[K_i, K_i]$ is
either a trivial group or a compact semisimple group.  As ${\rm rank } \, \Gamma \,=\, 1$
and 
$K_1 \times \cdots \times K_n  \simeq \R^n \times C$ where $C$ is a compact group, by Lemma
\ref{lemma-max-cpt},  the results follows. 
\end{proof}

The next result follows immediately from Proposition \ref{prop-max-cpt}.

\begin{corollary}\label{cor-max-cpt}
  Let $ L $ be a semisimple Lie group such that a maximal compact subgroup of $ L $ is semisimple.  Let
$ L_1,\, \cdots ,\,L_{r} $ be the simple factors of $ L $. Then any maximal compact subgroup
of $ L $ is of the form $ G_1\cdots G_r $ where $ G_i  $ is a maximal compact subgroup of $ L_i $
for each $ i\,=\,1,\,\cdots ,\,r $.   
\end{corollary}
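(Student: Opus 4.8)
The plan is to reduce everything to Proposition \ref{prop-max-cpt}, whose two parts are governed by the dichotomy of whether the subgroup $\Gamma := L_1 \cap L_2 \cdots L_r$ is finite or infinite. The whole point is to show that the hypothesis—that $L$ possesses a \emph{semisimple} maximal compact subgroup—forces the finite case, after which the desired product decomposition is exactly the conclusion of part (1) of the proposition.

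First I would rule out the infinite case. Suppose, towards a contradiction, that $|\Gamma| = \infty$. By Proposition \ref{prop-max-cpt}(2), any maximal compact subgroup $K$ of $L$ then has the form $K = T [K_1, K_1] \cdots [K_r, K_r]$, where $T$ is a central torus of $K$ with $\dim T = 1$. In particular $K$ has a nontrivial (one-dimensional) central torus, so $K$ cannot be semisimple. This contradicts the hypothesis that $L$ admits a semisimple maximal compact subgroup, since all maximal compact subgroups of $L$ are mutually conjugate, and hence one of them is semisimple if and only if every one of them is. Therefore $|\Gamma| < \infty$.

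Once $\Gamma$ is finite, I would simply invoke Proposition \ref{prop-max-cpt}(1): for any maximal compact subgroup $K$ of $L$ there exist maximal compact subgroups $G_i$ of $L_i$, for $i = 1, \dots, r$, such that $K = G_1 \cdots G_r$. This is precisely the assertion of the corollary.

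The main obstacle, such as it is, lies entirely upstream in Proposition \ref{prop-max-cpt} itself; for the corollary the only genuine step is recognizing that semisimplicity of the maximal compact subgroup is incompatible with the infinite-$\Gamma$ branch, which is immediate from the explicit one-dimensional central torus $T$ appearing in part (2). No computation is required beyond this short case analysis together with the conjugacy of maximal compact subgroups.
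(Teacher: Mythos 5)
Your proof is correct and matches the paper's approach: the paper derives this corollary immediately from Proposition \ref{prop-max-cpt}, exactly as you do, with the semisimplicity hypothesis ruling out the infinite-$\Gamma$ case via part (2) and part (1) then giving the product decomposition. Your appeal to conjugacy of maximal compact subgroups is harmless but not even needed, since part (2) already asserts that \emph{any} maximal compact subgroup fails to be semisimple in that case.
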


\begin{remark} We remark here that semisimplicity of 
maximal compact subgroups in each simple factor 
of a semisimple group $L$ does not force semisimplicity
of a maximal compact subgroup in $L$.  
One can construct an example as in the following way.
Let $\widetilde{{\rm SU} (p,q)}$ be the universal 
cover of ${\rm SU} (p,q)$.  Then 
one can see that the fixed point subgroup of any
global Cartan involution in $\widetilde{{\rm SU} (p,q)}$ is isomorphic to $\R \times {\rm SU} (p) \times {\rm SU} (q)$. Thus any maximal compact subgroup in 
$\widetilde{{\rm SU} (p,q)}$ is isomorphic to 
${\rm SU} (p) \times {\rm SU} (q)$, and hence it is
semisimple.  It is clear that there is a discrete central subgroup $\Lambda \subset \widetilde{{\rm SU} (p,q)}$ such that $\Lambda \simeq \Z$.  Let
$\Gamma \,:=\, \{ (\gamma, \gamma) \,\big\vert\,\, \gamma \,\in\,
\Lambda \}$.
Now 
consider $L\,:=\, \widetilde{{\rm SU} (p,q)} \times
\widetilde{{\rm SU} (p,q)}/ \Gamma$, and let $\theta
\,:\, \widetilde{{\rm SU} (p,q)} \times
\widetilde{{\rm SU} (p,q)} \,\longrightarrow\, L$ be the quotient homomorphism.  Let $I \in {\rm SU} (p,q)$ be the identity element,  
$L_1 \,:=\, \theta 
(\widetilde{{\rm SU} (p,q)} \times \{I\})$, and $L_2
\,:= \,\theta 
(\{I\} \times \widetilde{{\rm SU} (p,q)} )$.  Then
$L_1 \simeq \widetilde{{\rm SU} (p,q)} \simeq L_2$, and $L_1, L_2$ are the simple factors of $L$.
Thus both the simple factors of $L$ have semisimple maximal compact subgroups, but by 
Proposition \ref{prop-max-cpt} (2),  maximal compact subgroups of $L$
are not semisimple.
\qed
\end{remark}

Our next goal is to list all the simple real groups according to their types; see Definition \ref{def-type-I-type-II}. 
We first make the following observations.
Let $G$ be a simple Lie group and $K$ be the group of fixed points of a Cartan involution.
When $\dim   Z(K)^0 \,=\,1$
there are only two possibilities, namely, $Z(K)^0 \,\simeq \,\SS^1$ or
$Z(K)^0 \,\simeq\, \R$. We list below
various equivalent conditions for these possibilities to hold,
which are easily verifiable.

The following conditions are equivalent to  $Z(K)^0 \,\simeq\, \R$:
\begin{align*}
	&   \bullet\, K \,\simeq \,\R \times [K,\, K]   \,,\, \ \bullet\,   |\pi_1 (G,\, e)| \,<\, \infty\,, \ \    \bullet\, | Z (G) | \,=\, \infty,\\
	&  \bullet\,    K \text{  is non-compact}, \, \, \bullet\,   [K,\, K] \text{ is a maximal compact subgroup in}\,   G       .  
\end{align*}
Similarly, the following is a bunch of equivalent conditions for the situation $Z(K)^0 \simeq \SS^1$:
\begin{align*}
	& \bullet\    
	\SS^1 \times [K,\, K] \text{  is a (finite) cover of }  K \, , \  \ \bullet\
	|\pi_1 (G,\, e)| \,=\, \infty  \,,\ \  \bullet\,  \  
	| Z (G) | \,<\, \infty   \,,\\&    \bullet\,
	K \text{ is compact}, \,  \bullet\ 
	K \text{ is a maximal compact subgroup in } G \,,\,  \,   \bullet\ 
	\Z \,\subset\, \pi_1 (G,\, e) .	
\end{align*}

In view of the foregoing observations we list below the simple real groups with respect to their types.
We follow the notation of real simple  Lie algebras as  in \cite[Chapter VI, \S 10]{Kn} and the data
associated with them  as in \cite[Appendix C]{Kn}.
Let us now assume that $G$ is a simple Lie group. Let $\k$ be the 
Lie algebra of $K$. If $\k$ is semisimple then $G$ is either of Type-I or of Type-II (and in this case $K$ is compact).

When $\k$ is simple we list below 
the Type-I and the Type-II simple Lie groups. 
\begin{enumerate}
\item 
The simple Lie group $G$ is of Type-I if and only if 
$G$ is a compact simple Lie group or $G$ is a
complex simple Lie group or $\g$ is
one of the Lie algebras  $ \s\l(n,\R)$  for  $n>2,\, {\s\l}(n,\H), \, {\rm E\, I},\, {\rm E \,IV},\,{\rm E \,V},\, {\rm E \,VIII},\, {\rm F \,II}$.

\item 
The simple Lie group $G$ is of Type-II if and only if 
 the Lie algebra $\g$ is isomorphic to  one of the Lie algebras  $ {\s\p}(p,q)$, ${\s\o}(p,q)$ for $ p \geq q >2 $, $ {\rm E \,II},\, {\rm E \,VI},\,{\rm E \,IX},\,{\rm F \,I},\,{\rm G}$.
\end{enumerate}

We now assume $\dim   \z(\k) =1$.  Then we have the following:
\begin{enumerate}
\item If $\g \simeq \s\l_2 (\R)$ then $G$ is neither of 
Type-I nor of Type-II. In this case a maximal compact subgroup is 
either isomorphic to $\SS^1$ or the trivial group according as $|Z(G)| < \infty $ or $|Z(G)| = \infty $.

\item Let $ \g$ be isomorphic to one of the Lie algebras
$ \s\p(n,\R),\, \s\o^*(2n)$,  $\s\o(n,2)$ for $ n>2 $, ${\rm E\, III}$, $\, {\rm E\, VII}$.
In this case $[\k, \,\k]$ is simple. Thus $G$ is of Type-I
if and only if $|Z(G)| \,=\, \infty$, and the subgroup $K$ is 
a maximal compact with
a one dimensional center if and only if  $|Z(G)| \,<\, \infty$.

\item Let $\g \,\simeq\, \s\u(p,q)$. In this case 
$[\k, \,\k]$ has two simple factors. Thus 
$G$ is of Type-II
if and only if $|Z(G)| \,=\, \infty$, and the subgroup $K$ is a maximal compact with
a one dimensional center if and only if  $|Z(G)| \,<\, \infty$.
 \end{enumerate}

\subsection*{Acknowledgements}
Biswas is supported by a J. C. Bose Fellowship: JBR/2023/000003. Chatterjee acknowledges support from the SERB-DST MATRICS project: MTR/2020/\\000007. 
The authors would like to thank Prof. J. Lauret for bringing \cite{LW} to their attention. They also thank   to the organizers of the  Symposium on Geometry and Topology, held during the {\it 87$^{ th}$ Annual Conference of the Indian Mathematical Society}   on December 5, 2021, and the online Mathematics conference \href{https://sites.google.com/view/world-of-groupcraft-2}{\it World of Group Craft -II} on September 2, 2022, where the some of us presented the main results of this article.


\begin{thebibliography}{AAAAAAA} 

\bibitem[Az]{A} H. Azad,   On the third Betti number of some compact homogeneous manifolds,  {\it Studia Sci. Math. Hungar.},  {\bf 25} (1990), no. 1--2.	
	
\bibitem[Ba]{Ba}	P. Baum,  On the cohomology of homogeneous spaces,   {\it  Topology}, {\bf 7} (1968), 15--38. 

\bibitem[BD]{BD} Theodor Br\"{o}cker and Tammo tom Dieck, {\it Representations of compact Lie groups}, Graduate Texts in Mathematics, 98. Springer-Verlag, New York, 1995.

\bibitem[Bo]{Bo} A. Borel, Sur la cohomologie des espaces fibr\'es principaux et des espaces homog\`{e}nes de groupes 
de Lie compacts, {\it Ann. of Math.}, {\bf 57} (1953), 115--207.

\bibitem[BC]{BC} I. Biswas and P. Chatterjee, On the exactness of 
 Kostant-Kirillov form and the second cohomology of nilpotent orbits, {\it Internat. J. 
 Math.}, {\bf 23} (2012), no. 8, 1250086.

\bibitem[BCM1]{BCM} I. Biswas, P. Chatterjee, C. Maity, The second cohomology of nilpotent orbits in classical Lie algebras, {\it Kyoto J. Math.}, {\bf 60} (2020), 717--799.

\bibitem[BCM2]{BCM2} I. Biswas, P. Chatterjee, C. Maity,  {Cartan's theorem revisited}, preprint.

\bibitem[Ca]{Ca2} H. Cartan, La transgression dans un groupe de Lie et dans un espace fibr\'e principal, 
{\it Colloque de topologie (espaces fibr\'es)}, Bruxelles, 1950, pp. 57--71.

\bibitem[CM]{CM} P. Chatterjee and C. Maity, On the second cohomology of 
nilpotent orbits in exceptional Lie algebras, {\it Bull. Sci. Math.}, {\bf 141} (2017), no. 1, 10--24.
  
 \bibitem[Fr]{Fr} M. Franz,  The cohomology rings of homogeneous spaces. {\it J. Topol.}, {\bf 14} (2021), no. 4, 1396--1447.  
  
\bibitem[Go]{Go} V. V. Gorbatsevich, Compact homogeneous manifolds of dimension at most 7 up to a finite-sheeted covering. (Russian) {\it Izv. Ross. Akad. Nauk Ser. Mat.}, {\bf 76} (2012), no. 4, 27--40; translation in {\it Izv. Math.}, {\bf 76} (2012), no. 4, 669--680.  
 
\bibitem[GHV-II]{GHV-2} W. Greub, S. Halperin and R. Vanstone, 
{\it Connections, curvature, and cohomology. Vol. II: Lie groups, principal bundles, and characteristic classes}, Pure and Applied Mathematics, Vol. 47-II. Academic Press, New York-London, 1973. 

\bibitem[GHV-III]{GHV-3} W. Greub, S. Halperin and R. Vanstone, {\it Connections, curvature, and cohomology, Volume III: Cohomology of principal bundles and homogeneous spaces}, Pure and Applied Mathematics, Vol. 47-III. Academic Press, New York-London, 1976. 

\bibitem[GS]{GS} V. Guillemin and S. Sternberg,  {\it 
Supersymmetry and equivariant de Rham theory},  Springer-Verlag, Berlin, 1999.

\bibitem[He]{He} S. Helgason, {\it Differential Geometry, Lie Groups, and Symmetric Spaces},
Academic Press, New York, 1978.
 
\bibitem[Ho]{H} G. Hochschild, {\it The structure of Lie groups}, Holden-Day, Inc., San Francisco, 1965.

\bibitem[HS]{HS} H. Hopf and H. Samelson,  Ein Satz \"{u}ber die Wirkungsr\"{a}ume geschlossener Liescher Gruppen, {\it Comment. Math. Helv.}, {\bf 13} (1941), 240--251.
     
\bibitem[Kn]{Kn} A. W. Knapp, {\it Lie groups beyond an introduction,} Second edition, Progress in Mathematics, Vol. 140, Birkh\"{a}usar, Basel, 2002.  
   

\bibitem[LW]{LW}  J. Lauret and C. E. Will,  Harmonic 3-forms on compact homogeneous spaces,  {\it  J. Geom. Anal.},  {\bf  33}, 175, (2023),  39 pp.

\bibitem[Mo]{Mo} G. D. Mostow, On covariant fiberings of Klein spaces, {\it Amer. Jour. Math.}, {\bf 77}, (1955), 247--278.


\bibitem[Tu]{Tu} Loring W. Tu, {\it Introductory lectures on equivariant cohomology},
with appendices by Tu and Alberto Arabia. Annals of Mathematics Studies, 204. Princeton University Press, Princeton, NJ, 2020. 


\bibitem[Wa]{Wa}  Hsien-Chung Wang, Homogeneous spaces with non-vanishing Euler characteristics, {\it  Ann. of Math.}, (2), {\bf 50} (1949), 925--953.

\bibitem[Wo]{Wo} J. Wolf,  The cohomology of homogeneous spaces, {\it Amer. J. Math.}, {\bf 99}  (1977), no. 2, 312--340.



\end{thebibliography}
\end{document}